\providecommand{\tabularnewline}{\\}
\numberwithin{equation}{section}
\numberwithin{figure}{section}
\theoremstyle{plain}
\newtheorem{thm}{\protect\theoremname}
\theoremstyle{plain}
\newtheorem{lem}[thm]{\protect\lemmaname}
\theoremstyle{remark}
\newtheorem{rem}[thm]{\protect\remarkname}
\theoremstyle{definition}
\newtheorem{example}[thm]{\protect\examplename}
\theoremstyle{plain}
\newtheorem{prop}[thm]{\protect\propositionname}
\theoremstyle{plain}
\newtheorem{cor}[thm]{\protect\corollaryname}
\theoremstyle{remark}
\newtheorem{notation}[thm]{\protect\notationname}
\theoremstyle{remark}
\newtheorem*{acknowledgement*}{\protect\acknowledgementname}
\providecommand{\acknowledgementname}{Acknowledgement}
\providecommand{\corollaryname}{Corollary}
\providecommand{\examplename}{Example}
\providecommand{\lemmaname}{Lemma}
\providecommand{\notationname}{Notation}
\providecommand{\propositionname}{Proposition}
\providecommand{\remarkname}{Remark}
\providecommand{\theoremname}{Theorem}
\begin{document}
\title{Fibrations by affine lines on rational affine surfaces with irreducible
boundaries }
\author{Adrien Dubouloz}
\address{IMB UMR5584, CNRS, Univ. Bourgogne Franche-Comté, F-21000 Dijon, France.}
\email{adrien.dubouloz@u-bourgogne.fr}
\thanks{This research was partially funded by ANR Grant \textquotedblright FIBALGA\textquotedblright{}
ANR-18-CE40-0003. The Institute of Mathematics of Burgundy receives
support from the EIPHI Graduate School ANR-17- EURE-0002}
\subjclass[2000]{14R25, 14E05, 14R05}
\begin{abstract}
We consider fibrations by affine lines on smooth affine surfaces obtained
as complements of smooth rational curves $B$ in smooth projective
surfaces $X$ defined over an algebraically closed field of characteristic
zero. We observe that except for two exceptions, these surfaces $X\setminus B$
admit infinitely many families of $\mathbb{A}^{1}$-fibrations over
the projective line with irreducible fibers and a unique singular
fiber of arbitrarily large multiplicity. For $\mathbb{A}^{1}$-fibrations
over the affine line, we give a new and essentially self-contained
proof that the set of equivalence classes of such fibrations up to
composition by automorphisms at the source and target is finite if
and only if the self-intersection number $B^{2}$ of $B$ in $X$
is less than or equal to $6$. 
\end{abstract}

\maketitle

\section*{Introduction}

Affine surfaces whose automorphism groups act with a dense orbit with
finite complement were studied by M.H. Gizatullin and V.I. Danilov
in a series of seminal papers in the seventies \cite{Giz70,Giz71, GiDa75, GiDa77}.
There, they established that except for finitely many special cases,
these are the affine surfaces which admit projective completions whose
boundaries are chains of smooth proper rational curves. Up to the
exception $\mathbb{A}^{1}\setminus\{0\}\times\mathbb{A}^{1}$, such
surfaces are equivalently characterized by the property that they
admit two fibrations over the affine line $\mathbb{A}^{1}$, whose
general fibers are pairwise distinct and isomorphic to $\mathbb{A}^{1}$,
see \cite{Giz71,Be83,Dub05}. Many of these surfaces actually admit
infinitely many such $\mathbb{A}^{1}$-fibrations with pairwise different
general fibers up to the equivalence relation given by composition
by automorphisms on the source and the target. This richness of $\mathbb{A}^{1}$-fibrations
contributes in a central way to the complexity of their automorphism
groups, see e.g. \cite{BlanDub11,FKZ11, BlanDub15,DubLam15,Kov15,KPZ17}. 

In this article, we consider $\mathbb{A}^{1}$-fibrations on the subclass
consisting of affine surfaces $S$ defined over an algebraically closed
field $k$ of characteristic zero and which admit smooth projective
completions $X$ with boundary $B=X\setminus S$ isomorphic to the
projective line $\mathbb{P}^{1}$. A surface of this type is isomorphic
either to the affine plane $\mathbb{A}^{2}$, or to the complement
of a smooth conic in $\mathbb{P}^{2}$ or to the complement of an
ample section of a $\mathbb{P}^{1}$-bundle $\pi_{n}:\mathbb{F}_{n}=\mathbb{P}(\mathcal{O}_{\mathbb{P}^{1}}\oplus\mathcal{O}_{\mathbb{P}^{1}}(-n))\to\mathbb{P}^{1}$
for some $n\geq0$. Furthermore, the famous Danilov-Gizatullin isomorphism
theorem asserts that the isomorphism class of an affine surface of
the form $\mathbb{F}_{n}\setminus B$ depends only on the self-intersection
$B^{2}$ of the boundary $B$. 

While $\mathbb{A}^{2}$ and the complements of smooth conics in $\mathbb{P}^{2}$
only admit $\mathbb{A}^{1}$-fibrations over $\mathbb{A}^{1}$, a
surface $\mathbb{F}_{n}\setminus B$ admits an $\mathbb{A}^{1}$-fibration
$\pi:\mathbb{F}_{n}\setminus B\to\mathbb{P}^{1}$ given by the restriction
of the $\mathbb{P}^{1}$-bundle $\pi_{n}:\mathbb{F}_{n}\to\mathbb{P}^{1}$.
These fibrations are actually locally trivial $\mathbb{A}^{1}$-bundles,
and one can check that their equivalence classes are in one-to-one
correspondence with the orbits of the natural action of the group
$\mathrm{PGL_{2}(k)}$ on the space $\mathbb{P}(H^{1}(\mathbb{P}^{1},\mathcal{O}_{\mathbb{P}^{1}}(-B^{2}))$
(see Remark \ref{rem:Torsor}). In particular, for $B^{2}\geq3$,
these surfaces admit infinitely many equivalence classes of $\mathbb{A}^{1}$-fibrations
over $\mathbb{P}^{1}$. The geometry and equivalence classes of other
families of $\mathbb{A}^{1}$-fibrations over $\mathbb{P}^{1}$ on
surfaces $\mathbb{F}_{n}\setminus B$ has been much less studied than
those of $\mathbb{A}^{1}$-fibrations over $\mathbb{A}^{1}$. Our
first result, inspired by a construction due to Blanc-van Santen \cite{BlavS19}
of infinite families of pairwise non-equivalent closed embedding of
the affine line in the complement of the diagonal in $\mathbb{P}^{1}\times\mathbb{P}^{1}$,
reads as follows: 
\begin{thm}
\label{thm:A1Fib-over-P1}Let $(\mathbb{F}_{n},B)$ be a pair consisting
of a Hirzebruch surface $\pi_{n}:\mathbb{F}_{n}\to\mathbb{P}^{1}$
and an ample section $B$ of $\pi_{n}$. Then for every $m\geq4$,
there exist infinite families of equivalence classes of $\mathbb{A}^{1}$-fibrations
$\pi:\mathbb{F}_{n}\setminus B\to\mathbb{P}^{1}$ with a unique singular
fiber, irreducible and of multiplicity $m$.
\end{thm}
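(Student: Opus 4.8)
The plan is to realise each desired fibration as the restriction of a $\mathbb{P}^1$-fibration on a well-chosen smooth projective completion of $S=\mathbb{F}_n\setminus B$, to build such completions carrying a prescribed multiple fibre, and then to separate the resulting fibrations by a modulus that is preserved under the equivalence relation. First I would recall the standard correspondence: an $\mathbb{A}^1$-fibration $\pi\colon S\to\mathbb{P}^1$ extends to a $\mathbb{P}^1$-fibration $\bar\pi\colon V\to\mathbb{P}^1$ on a smooth projective SNC completion $(V,D)$ of $S$ in which the boundary $D=V\setminus S$ is a tree of rational curves with exactly one horizontal component $C$, a section of $\bar\pi$, all other components being vertical; conversely any such completion whose interior is isomorphic to $S$ yields an $\mathbb{A}^1$-fibration by restriction. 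In this dictionary a unique degenerate fibre, irreducible and of multiplicity $m$, corresponds to a single fibre $F_{0}=\bar\pi^{-1}(t_{0})$ containing exactly one component $\bar\Gamma$ that meets $S$, with $\bar\Gamma\cap S\cong\mathbb{A}^1$ occurring in $F_{0}$ with multiplicity $m$ and every other component of $F_{0}$ contained in $D$. It therefore suffices to produce such completions of $S$; and since, by the Danilov--Gizatullin isomorphism theorem, the abstract surface $S$ depends only on $w=B^{2}$, I am free to perform the construction on any convenient model, checking only that the contraction of $D$ returns a Hirzebruch surface minus an ample section with the correct boundary self-intersection.

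To build $F_{0}$ I would work over a single point $t_{0}\in\mathbb{P}^1$, starting from the locally trivial $\mathbb{A}^1$-bundle $\pi_{n}|_{S}$, and perform a tower of blow-ups of infinitely near points supported over $t_{0}$ and away from $B$. A first tower, blowing up repeatedly the intersection of the strict transform of the fibre with the last exceptional curve, produces a chain of vertical curves whose multiplicities run through $1,2,\dots,m$; sprouting $\bar\Gamma$ as a $(-1)$-curve from the interior of the multiplicity-$m$ component realises it as a leaf meeting the rest of the fibre in a single point, so that $\bar\Gamma\cap S\cong\mathbb{A}^1$ and $F_{0}\cap S=m\Gamma$ is irreducible while all remaining fibres stay smooth affine lines. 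Contracting the superfluous $(-1)$-curves of the new boundary and tracking self-intersections through the ensuing elementary transformations shows, via Danilov--Gizatullin, that the interior is again $\mathbb{F}_{n}\setminus B$. The essential point is that the combinatorial type of $F_{0}$ leaves, once $m\ge4$, a free parameter in the position of the blown-up points: four of the distinguished points then lie on a common component of the exceptional configuration, and their cross-ratio $\lambda$ is not normalisable. This produces a nonconstant family $\{\pi_{\lambda}\}$ of $\mathbb{A}^1$-fibrations on $S$, all with a unique singular fibre, irreducible and of multiplicity $m$.

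It remains to show that infinitely many members of this family are pairwise inequivalent. Here I would use that the minimal $\mathbb{P}^1$-fibred SNC completion is canonically attached to an $\mathbb{A}^1$-fibration: if $\pi_{\lambda}$ and $\pi_{\lambda'}$ are equivalent, with $\pi_{\lambda'}\circ\phi=\psi\circ\pi_{\lambda}$ for some $\phi\in\mathrm{Aut}(S)$ and $\psi\in\mathrm{PGL}_{2}(k)$, then $\phi$ extends to an isomorphism of these minimal completions commuting with the fibrations, hence carries the degenerate fibre configuration of $\pi_{\lambda}$ onto that of $\pi_{\lambda'}$ while respecting every incidence among its components and the section. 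Such an isomorphism must preserve the cross-ratio of the four distinguished points up to the finite group of symmetries of the configuration, so each equivalence class accounts for only finitely many values of $\lambda$; letting $\lambda$ range over infinitely many values then yields infinitely many classes.

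I expect this last step to be the main obstacle. Because $\mathrm{Aut}(S)$ is infinite-dimensional for these Gizatullin surfaces, a naive dimension count is powerless -- this is precisely the difficulty that Blanc--van Santen had to circumvent for embeddings of the affine line -- so the real content is to verify that the cross-ratio is a genuine invariant of the equivalence class, that is, that no automorphism of $S$ can alter it. Making this rigorous requires controlling exactly how automorphisms of $S$ act on the canonical fibred completion and confirming that the modulus introduced in the construction is honestly nonconstant when $m\ge4$; both points hinge on the canonicity and explicit description of the minimal completion carried out in the earlier part of the argument.
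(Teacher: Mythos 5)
There is a genuine gap in your construction, and it is fatal: the surface you build is not $\mathbb{F}_{n}\setminus B$. Your tower of blow-ups has its first centre on $F_{t_{0}}$ away from $B$, hence in the interior $S$, and blowing up interior points changes the isomorphism type of the interior in a way that no subsequent boundary contraction can undo. Concretely, write $d=B^{2}$ and let $V\to\mathbb{F}_{n}$ be the $(m+1)$-fold blow-up you describe, with boundary $D=B\cup F_{t_{0}}\cup E_{1}\cup\cdots\cup E_{m}$ (everything vertical over $t_{0}$ except $\bar{\Gamma}$). Then $\mathrm{Cl}(V\setminus D)\cong\mathbb{Z}$ is generated by the class $e$ of $\bar{\Gamma}\cap(V\setminus D)$, and a direct computation with total transforms gives $K_{V\setminus D}=(m(d-1)+1)\,e$, whereas $K_{\mathbb{F}_{n}\setminus B}=(d-2)f$ for a generator $f$ of $\mathrm{Cl}(\mathbb{F}_{n}\setminus B)\cong\mathbb{Z}$. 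Since $m(d-1)+1>d-2$ for every $m\geq1$ and $d\geq2$, the two surfaces are never isomorphic. (Note also that your new boundary is a chain of type $[d,-m,-2,\ldots,-2]$: it contains no $(-1)$-curve, so there is nothing to contract.) The paper sidesteps this entirely: the multiple fibre is the \emph{original} fibre $F_{q}$ of $\pi_{n}$ through a point $q\in B$, and the multiplicity $m$ is created by taking the pencil $\mathcal{L}_{q,B_{m}}$ generated by $B+mF_{q}$ and a smooth section $B_{m}\in|B+mF|$ meeting $B$ only at $q$; the resolution of that pencil blows up only points lying over $q\in B$, i.e. only boundary points, so the interior is literally unchanged.

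A second problem is the modulus. In the tower you describe, every centre after the first is the intersection point of two prescribed curves, so the only free parameters are the position of the first centre on $F_{t_{0}}$ and of the last one on $E_{m}$; there are never ``four distinguished points on a common component,'' hence no cross-ratio, and those two parameters are exactly of the size that an automorphism group such as $\mathrm{Aut}(\mathbb{F}_{n},B\cup F_{q})$ can absorb. In the paper the moduli come instead from the choice of the pencil $\mathcal{L}_{q,B_{m}}$, which varies in a dense open subset of $\mathbb{P}^{m-1}$, while the group of automorphisms of $(\mathbb{F}_{n},B\cup F_{q})$ that could identify two such pencils is at most $2$-dimensional; this is precisely where the hypothesis $m\geq4$ enters. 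Finally, the step you rightly single out as the main obstacle --- that an equivalence of the fibred surfaces extends to an automorphism of the completion --- is Lemma \ref{lem:Pencil-Aut-Orbit-Equivalence} of the paper and does require a genuine argument (an analysis of the minimal resolution of the induced birational self-map of $\mathbb{F}_{n}$, using that every member of the pencil other than $B+mF_{q}$ is smooth); your proposal does not supply it.
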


The proof of Theorem \ref{thm:A1Fib-over-P1} given in Section \ref{sec:A1-Fib-P1}
actually provides a natural bijection between a set of equivalence
classes of $\mathbb{A}^{1}$-fibrations $\pi:\mathbb{F}_{n}\setminus B\to\mathbb{P}^{1}$
with a unique singular fiber, irreducible and of multiplicity $m$,
and the elements of the set-theoretic quotient of the set of closed
points of a certain Zariski dense open subset of $\mathbb{P}^{m-1}$
by the algebraic action of a linear algebraic group whose general
orbits are at most $2$-dimensional. This construction strongly suggests
that by replacing the consideration of set-theoretic quotients by,
for instance, that of GIT quotients of suitable open subsets, one
should be able to strengthen Theorem \ref{thm:A1Fib-over-P1} in a
form asserting the existence of algebraic families of $\mathbb{A}^{1}$-fibrations
s $\pi:\mathbb{F}_{n}\setminus B\to\mathbb{P}^{1}$ with a unique
singular fiber, irreducible and of multiplicity $m$ parametrized
by the closed points of an algebraic variety of dimension $m-3$.
Tackling the necessary additional constructions which are needed to
give a rigorous and accurate formulation of this coarse moduli viewpoint
falls beyond the scope of the present article. \textcolor{blue}{}\\

In a second step, we consider equivalence classes of $\mathbb{A}^{1}$-fibrations
over $\mathbb{A}^{1}$ on affine surfaces $X\setminus B$. It is a
well-known result of Danilov-Gizatullin \cite{GiDa75,GiDa77} that
every such surface other than the complement of a smooth conic in
$\mathbb{P}^{2}$ has a unique equivalence class of smooth $\mathbb{A}^{1}$-fibration
over $\mathbb{A}^{1}$. As already observed by Danilov-Gizatullin
again, for every $d=1,\ldots,5$, the finiteness of the number of
equivalences classes of $\mathbb{A}^{1}$-fibrations over $\mathbb{A}^{1}$
on surfaces $X\setminus B$ with $B^{2}=d$ is then a consequence
of the finiteness of isomorphism types of pairs $(X,B)$ with $B^{2}=d$.
Over the field of complex numbers, equivalence classes of non-smooth
$\mathbb{A}^{1}$-fibrations over $\mathbb{A}^{1}$, equivalently
$\mathbb{A}^{1}$-fibrations having non-reduced components in their
degenerate fibers, have been extensively studied in \cite{FKZ11}
in a broader context, see especially Corollary 6.3.19 and Corollary
6.3.20 in \emph{loc. cit.}. The techniques there consist in first
constructing a finite-to-one correspondence between equivalence classes
of such $\mathbb{A}^{1}$-fibrations and collections of points in
a configuration space. The latter encodes the standard construction
of a completion of an $\mathbb{A}^{1}$-fibered smooth affine surface
$\pi:S\to\mathbb{A}^{1}$ into a $\mathbb{P}^{1}$-fibered smooth
projective surface $\bar{\pi}:\bar{S}\to\mathbb{P}^{1}$ obtained
by performing a suitable sequence of blow-ups of closed points starting
from a Hirzerbruch surface $\pi_{n}:\mathbb{F}_{n}\to\mathbb{P}^{1}$.
The second step consists in describing the possible configurations
and determining their respective numbers of moduli. A consequence
of this extensive description is that for every $B^{2}\geq7$, the
surface $X\setminus B$ admits families of pairwise non-equivalent
$\mathbb{A}^{1}$-fibrations $X\setminus B\to\mathbb{A}^{1}$ parametrized
by a space whose dimension is an increasing function of $B^{2}$ (see
\cite[Example 6.3.2]{FKZ11}). Our second result consists of an alternative
direct proof of the following theorem, based on the use of a different
point of view. 
\begin{thm}
\label{thm:MainThmA1Fib-Aff}Let $(X,B)$ be a pair consisting of
a smooth projective surface $X$ and an ample smooth rational curve
$B$ on $X$. Then the following alternative holds:

a) If $B^{2}\leq6$ then $X\setminus B$ admits at most seven equivalence
classes of $\mathbb{A}^{1}$-fibrations over $\mathbb{A}^{1}$,

b) If $B^{2}\geq7$ then the set of equivalence classes of $\mathbb{A}^{1}$-fibrations
$X\setminus B\to\mathbb{A}^{1}$ is infinite, of cardinality larger
than or equal to that of the field $k$.
\end{thm}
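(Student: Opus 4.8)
The plan is to fix the affine surface $S=X\setminus B$, to encode every $\mathbb{A}^{1}$-fibration $\pi\colon S\to\mathbb{A}^{1}$ by the combinatorics of a suitable completion, and to read off the number of equivalence classes (up to $\mathrm{Aut}(S)\times\mathrm{Aut}(\mathbb{A}^{1})$) from the dimension of the space of free parameters in that combinatorial datum. First I would dispose of the two surfaces whose Picard group is atypical: for $S=\mathbb{A}^{2}$ ($B^{2}=1$) every $\mathbb{A}^{1}$-fibration over $\mathbb{A}^{1}$ is equivalent to a coordinate projection by Abhyankar--Moh--Suzuki, and for the complement of a smooth conic ($B^{2}=4$) a direct analysis, or the results of Danilov--Gizatullin, gives only finitely many classes; both fall well within the bound in (a). For the remaining surfaces $S=\mathbb{F}_{n}\setminus B$ I would invoke the Danilov--Gizatullin isomorphism theorem to replace $(\mathbb{F}_{n},B)$ by the abstract surface depending only on $d=B^{2}$, and record the two invariants $\mathrm{Pic}(S)\cong\mathbb{Z}$ and $\mathcal{O}(S)^{*}=k^{*}$.

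Given $\pi$, I would pass to a minimal SNC completion $\bar\pi\colon\bar S\to\mathbb{P}^{1}$ extending $\pi$, with boundary $D=\bar S\setminus S$. A general fiber of $\pi$ being $\mathbb{A}^{1}$, the divisor $D$ contains a unique horizontal component, a section $C$ of $\bar\pi$; over the point $\infty\in\mathbb{P}^{1}$ added to $\mathbb{A}^{1}$ it contains the whole fiber $\bar\pi^{-1}(\infty)$, a tree of rational curves; and over the finitely many points of $\mathbb{A}^{1}$ carrying degenerate fibers it contains the rational chains arising from the completion of the multiple affine lines in those fibers. The computation of $\mathrm{rk}\,\mathrm{Pic}(S)$ in terms of $\bar S$ and $D$, using $\mathcal{O}(S)^{*}=k^{*}$, then pins down the gross shape of the degenerate fibers to a finite list of combinatorial possibilities for each $d$; this finiteness is the crucial structural input, and it is here that $\mathrm{Pic}(S)\cong\mathbb{Z}$ does the work.

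The heart of the argument is a correspondence that assigns to the equivalence class of $\pi$ a discrete \emph{boundary type} (the weighted dual graph of $D$ together with the multiplicities of the degenerate fibers) and, for each fixed type, the \emph{positions} of those centres of blow-up that are free to move along a rational component of $D$, the whole taken modulo the automorphisms of the configuration induced by $\mathrm{Aut}(S)$ and $\mathrm{Aut}(\mathbb{A}^{1})$. In the spirit of the proof of Theorem \ref{thm:A1Fib-over-P1}, the continuous moduli are thereby localised as cross-ratios of collinear points: a boundary type carries a positive-dimensional family of pairwise non-equivalent fibrations precisely when some rational component of $D$ receives at least four free points, since any three can be normalised by the available automorphisms, whose induced action on such a component factors through $\mathrm{PGL}_{2}$.

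The dichotomy then follows from a dimension count relating the total length of $D$ to $d=B^{2}$. For $d\le 6$ the length of $D$ is too small for four free points to accumulate on a single component, so every admissible boundary type is rigid; enumerating these finitely many types and discarding the redundancies produced by $\mathrm{Aut}(S)$ yields at most seven classes. For $d\ge 7$ there is an admissible type with at least four free points on one component, and since the relevant automorphism orbits have dimension strictly smaller than that of the space of free positions, the set-theoretic quotient --- hence the set of equivalence classes --- has cardinality at least that of $k$. I expect the main obstacle to be twofold: establishing the exact threshold, namely that four free points first become unavoidable precisely at $d=7$ and that the rigid types for $d\le 6$ number exactly seven, which demands a complete and careful enumeration of the admissible boundary types; and controlling the induced $\mathrm{Aut}(S)$-action finely enough to ensure that for $d\ge 7$ the continuous modulus genuinely persists in the quotient rather than being absorbed by automorphisms.
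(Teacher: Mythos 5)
Your strategy is essentially the configuration-space approach of Flenner--Kaliman--Zaidenberg, which the paper explicitly positions itself as an alternative to: you encode a fibration by the weighted dual graph of a completion together with the positions of the free centres of blow-up, and read off moduli as cross-ratios of points on boundary components. The paper instead isolates the subclass of fibrations having a multiplicity-two component in the degenerate fiber, realizes the relevant open pieces as explicit $\mu_{2}$-quotients $S_{\ell,s}$ of Danielewski-type hypersurfaces $x^{\ell}z=y^{2}-s^{2}(x)$, and classifies these by the polynomial $s$ up to rescaling; this produces $\lfloor\tfrac{d-5}{2}\rfloor$ moduli, hence infinitude exactly for $d\geq7$, while the case $d\leq6$ is settled by a finite case-by-case enumeration according to the multiplicity $m\in\{1,\ldots,d-1\}$ of the non-reduced component of the degenerate fiber.

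As written, your argument has genuine gaps at its two decisive points. First, the entire quantitative content of the theorem --- that the rigid boundary types for $d\leq6$ number exactly seven, and that four free points first become unavoidable precisely at $d=7$ --- is deferred to an enumeration you do not perform; this is not a detail to be filled in but is the statement itself. Second, and more seriously, the assertion that \emph{the relevant automorphism orbits have dimension strictly smaller than that of the space of free positions} cannot be established by a naive dimension count: for these surfaces $\mathrm{Aut}(S)$ is an infinite-dimensional group generated by fibered modifications with respect to several distinct $\mathbb{A}^{1}$-fibrations, and an equivalence between two fibrations of affine type need not extend to an automorphism of any common completion, so its action on your configuration space does not a priori factor through a finite-dimensional group such as $\mathrm{PGL}_{2}$ acting on a boundary component. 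The paper circumvents exactly this difficulty by reducing (via Lemma \ref{lem:Mult(1,2)-to-Mult2} and descent through a $\mu_{2}$-torsor, using $\mathrm{Pic}(S_{\ell,s})\cong\mathbb{Z}_{2}$) to isomorphisms of explicit Danielewski surfaces, for which the classification of \cite{DubPo09} controls \emph{all} isomorphisms rather than only those arising from a visible algebraic group. You would need an analogous rigidity statement --- for instance the reconstruction of standard completions in \cite{FKZ11} --- to close this gap before the cross-ratio count can be trusted to survive in the quotient.
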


For $B^{2}\leq6$, the different equivalence classes are derived by
an explicit case by case study. For $B^{2}\geq7$, our argument is
based on the study of the equivalence classes of a subclass of $\mathbb{A}^{1}$-fibrations
$\mathbb{F}_{n}\setminus B\to\mathbb{A}^{1}$ which have an irreducible
component of multiplicity two inside their degenerate fiber. We show
that for every $B^{2}\geq7$, the set of equivalence classes of $\mathbb{A}^{1}$-fibration
of this type is infinite. More precisely, we actually construct explicit
families of equivalences classes of $\mathbb{A}^{1}$-fibered smooth
affine surfaces $S\to\mathbb{A}^{1}$ with a unique degenerate fiber,
irreducible and of multiplicity two, depending algebraically on a
parameter varying in an affine space of dimension $\left\lfloor \tfrac{B^{2}-5}{2}\right\rfloor $
(see Example \ref{exa:Moduli_Sl,s}) and which are all realized as
restrictions of $\mathbb{A}^{1}$-fibrations $\mathbb{F}_{n}\setminus B\to\mathbb{A}^{1}$
on suitable Zariski open subsets. This indicates in an indirect fashion
that the ``number of moduli'' of $\mathbb{A}^{1}$-fibrations over
$\mathbb{A}^{1}$ on surfaces $X\setminus B$ with $B^{2}=d$ is bounded
from below by $\left\lfloor \tfrac{d-5}{2}\right\rfloor $. \textcolor{blue}{}\\

The article is organized as follows. In section one, after setting
some notations, we review basic properties of smooth affine surfaces
completable by smooth rational curves. We then proceed in section
two to the proof of Theorem \ref{thm:A1Fib-over-P1}. The third section
is devoted to the proof of Theorem \ref{thm:MainThmA1Fib-Aff}, which
combines several known facts together with new results on equivalence
classes of $\mathbb{A}^{1}$-fibrations $\pi:\mathbb{F}_{n}\setminus B\to\mathbb{A}^{1}$
having an irreducible component of multiplicity two inside their unique
degenerate fiber. 

\section{Preliminaries }

All varieties and schemes considered are defined over a fixed algebraically
closed field $k$ of characteristic zero. 

\subsection{Notations and basic definitions}

We briefly recall basic definitions on SNC divisors and standard properties
of $\mathbb{A}^{1}$-fibrations and $\mathbb{P}^{1}$-fibrations which
we use throughout the paper, see e.g \cite[Chapter 3]{MiyBook} for
the details. 

\vspace{-0.3em}

\subsubsection{\label{subsec:SNC-and-chains}SNC divisors and rationals trees on
smooth surfaces }

\indent\newline\indent (i) Let $X$ be a smooth projective surface.
An \emph{SNC divisor} on $X$ is a curve $B\subset X$ with smooth
irreducible components and ordinary double points only as singularities.
We say that $B$ is \emph{SNC-minimal} if its image by any strictly
birational proper morphism $\tau:X\rightarrow X'$ onto a smooth projective
surface $X'$ with exceptional locus contained in $B$ is not an SNC
divisor. A\emph{ rational tree} on $X$ is an SNC divisor whose irreducible
components are isomorphic to $\mathbb{P}^{1}$ and whose incidence
graph is a tree. A\emph{ rational chain} is a rational tree whose
incidence graph is a chain. We use the notation $B=B_{0}\triangleleft B_{1}\triangleleft\cdots\triangleleft B_{r}$
to indicate a rational chain whose irreducible components $B_{i}$
are ordered in such a way that for $0\leq i<j\leq r$, one has $B_{i}\cdot B_{j}=1$
if $j=i+1$ and $0$ otherwise. The sequence of self-intersections
$[B_{0}^{2},\ldots,B_{r}^{2}]$ is referred to as the \emph{type}
of the oriented rational chain $B$. 

(ii) An \emph{SNC} \emph{completion} of a smooth quasi-projective
surface $V$ is a pair $(X,B)$ consisting of a smooth projective
surface $X$ and an SNC divisor $B\subset V$ such that $X\setminus B\simeq V$.
The completion is said to be SNC-minimal if $B$ is SNC-minimal and
to be smooth if $B$ is smooth. 

\vspace{-0.3em}

\subsubsection{\label{subsec:P1-A1-Fib}Recollection on $\mathbb{A}^{1}$-fibrations
and $\mathbb{P}^{1}$-fibrations on smooth surfaces. }

\indent\newline\indent (i) A \emph{$\mathbb{P}^{1}$-fibration} on
a smooth projective surface $X$ is a morphism $\overline{\rho}:X\rightarrow C$
onto a smooth projective curve $C$ whose generic fiber is isomorphic
to the projective line over the function field of $C$. Every $\mathbb{P}^{1}$-fibration
$\overline{\rho}:X\rightarrow C$ is obtained from a Zariski locally
trivial $\mathbb{P}^{1}$-bundle over $C$ by performing a finite
sequence of blow-ups of points. In particular, every $\mathbb{P}^{1}$-fibration
has a section and its singular fibers are supported by rational trees
on $X$. If $X$ is rational, then for every smooth proper rational
curve $F$ with self-intersection $0$, the complete linear system
$|F|$ of effective divisors on $X$ linearly equivalent to $F$ defines
a $\mathbb{P}^{1}$-fibration $\overline{\rho}_{|F|}:X\rightarrow\mathbb{P}_{k}^{1}$
having $F$ as a smooth fiber. 

(ii) An\emph{ $\mathbb{A}^{1}$-fibration} on a smooth quasi-projective
surface $V$ is a surjective morphism $\rho:V\rightarrow A$ onto
a smooth curve $A$ whose generic fiber is isomorphic to the affine
line over the function field of $A$. Every $\mathbb{A}^{1}$-fibration
is the restriction of a $\mathbb{P}^{1}$-fibration $\overline{\rho}:X\rightarrow C$
over the smooth projective model $C$ of $A$, defined on an SNC completion
$(X,B)$ of $V$with boundary $B=\bigcup_{c\in C\setminus A}F_{c}\cup H\cup\bigcup_{a\in A}G_{a}$
where, $F_{c}=\overline{\rho}^{-1}(c)\simeq\mathbb{P}^{1}$ for every
$c\in C\setminus A$, $H$ is a section of $\overline{\rho}$, and
for every $a\in A$, $G_{a}$ is a union of SNC-minimal rational subtree
of the rational tree $(\overline{\rho}^{-1}(a))_{\mathrm{red}}$.
The pair $(X,B)$ is called a relatively minimal $\mathbb{P}^{1}$-fibered
completion of $\rho:V\to A$. If $\rho:V\to A$ is affine, every nonempty
$G_{a}$ is connected and has an irreducible component intersecting
$H$, and the closure in $X$ of every irreducible component of $\rho^{-1}(a)$
intersects $G_{a}$ transversely in a unique point. In particular,
every irreducible component of $\rho^{-1}(a)$ is isomorphic to $\mathbb{A}^{1}$
when equipped with its reduced structure. A scheme-theoretic closed
fiber of $\rho:V\rightarrow A$ which is not isomorphic to $\mathbb{A}^{1}$
is called \emph{degenerate}. 

(iii) A\emph{ }smooth $\mathbb{A}^{1}$-fibered surface is a pair
$(V,\pi)$ consisting of a smooth quasi-projective surface $V$ and
an $\mathbb{A}^{1}$-fibration $\pi:V\to A$ onto a smooth curve $A$.
The $\mathbb{A}^{1}$-fibration $\pi$ is said to be of affine type
if $A$ is affine and of complete type otherwise. Two $\mathbb{A}^{1}$-fibered
surfaces $(V,\pi:V\to A)$ and $(V',\pi':V'\to A')$ are said to be
\emph{equivalent} if there exist an isomorphism $\Psi:V\to V'$ and
an isomorphism $\psi:A\to A'$ such that $\pi'\circ\Psi=\psi\circ\pi$.

\subsection{\label{subsec:Models}Models of smooth affine surfaces with irreducible
rational boundaries }

We review known basic properties of smooth affine surfaces admitting
smooth completions $(X,B)$ with boundaries $B\cong\mathbb{P}^{1}$.
Recall \cite[Theorem 2]{Goo69} that for such a pair $(X,B)$, the
affineness of $X\setminus B$ implies that $B$ is the support of
an ample effective divisor on $X$. 
\begin{lem}
\label{lem:Pairs}\cite[Proposition 1]{Giz70} A pair $(X,B)$ consisting
of a smooth projective surface $X$ and a divisor $B\cong\mathbb{P}^{1}$such
that $X\setminus B$ is affine is isomorphic to one of the following:

a) $(\mathbb{P}^{2},B)$ where $B$ is either a line $L$ or a smooth
conic $Q$,

b) $(\mathbb{F}_{n},B)$ where $\pi_{n}:\mathbb{F}_{n}=\mathbb{P}(\mathcal{O}_{\mathbb{P}^{1}}\oplus\mathcal{O}_{\mathbb{P}^{1}}(-n))\to\mathbb{P}^{1}$,
$n\geq0$, is a $\mathbb{P}^{1}$-bundle and $B$ is an ample section
of $\pi_{n}$. 
\end{lem}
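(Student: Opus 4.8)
The plan is to classify the pairs $(X,B)$ by running a minimal model program relative to the boundary curve $B\cong\mathbb{P}^1$. Since $X\setminus B$ is affine, Goodman's theorem (cited just above) tells us that $B$ supports an ample effective divisor, so $B^2>0$; in particular $B$ is a smooth rational curve of positive self-intersection, hence $B$ cannot be contracted and $X$ is a smooth rational projective surface. The first step is to reduce to a minimal model. I would contract a maximal sequence of $(-1)$-curves that are disjoint from $B$, and more generally run an MMP on $X$ while keeping track of $B$; because $X$ is rational the outcome is either $\mathbb{P}^2$ or a Hirzebruch surface $\mathbb{F}_n$. The key point to verify is that every $(-1)$-curve contraction in this process can be arranged so that $B$ remains smooth and irreducible: a $(-1)$-curve $E$ meets $B$ in at most one point with multiplicity one (otherwise one finds a contradiction with $B$ smooth, or with ampleness after contraction), and contracting such an $E$ keeps the image of $B$ smooth of the same genus $0$.

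Next I would analyze the two possible minimal models separately. If the minimal model is $\mathbb{P}^2$, then $B$ maps to a smooth rational curve on $\mathbb{P}^2$; combined with ampleness and the requirement that $X\setminus B$ be affine, the admissible cases are exactly a line $L$ (degree $1$) and a smooth conic $Q$ (degree $2$). Here the main subtlety is ruling out higher-degree smooth rational curves: a smooth plane rational curve has degree $1$ or $2$ only (degree $\geq 3$ smooth plane curves have positive genus), and one must check that after any blow-downs to $\mathbb{P}^2$ the boundary is not further reducible, which forces case (a). If instead the minimal model is a Hirzebruch surface $\mathbb{F}_n$, then $B$ becomes a smooth rational curve on $\mathbb{F}_n$ whose complement is affine; such a curve must be a section of $\pi_n$ (it meets the general fiber in one point, otherwise $X\setminus B$ would contain a complete fiber and fail to be affine), and ampleness forces it to be an ample section, giving case (b).

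The hard part, and the place where care is needed, is the inductive contraction step: showing that one can always descend to a minimal model \emph{through pairs whose boundary is still an irreducible smooth rational curve of positive self-intersection with affine complement}. One must argue that any extremal $(-1)$-curve $E$ either is disjoint from $B$ (and can be contracted freely, with $B^2$ unchanged and the complement still affine) or meets $B$ transversally in a single point (so that its contraction decreases $B^2$ by one while preserving smoothness and irreducibility of $B$, and preserves affineness of the complement since we only blow down inside $B$'s incidence locus in a controlled way). I would verify this transversality and multiplicity-one claim by intersection-theoretic bookkeeping: if $E\cdot B\geq 2$ or $E$ meets $B$ with higher tangency, the image of $B$ would acquire a singular point or $B$ would have components, contradicting our standing hypothesis. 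Once this local analysis is in place, the global classification follows by induction on the number of blow-ups, landing precisely on the two families (a) and (b). The finiteness and the exhaustiveness of the list then follow from the fact that smooth rational curves with affine complement on $\mathbb{P}^2$ and $\mathbb{F}_n$ are classified by their degree, respectively by their property of being an ample section.
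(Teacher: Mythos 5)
There is a genuine gap, and it sits exactly where you flagged ``the hard part'': the claim that a $(-1)$-curve $E$ on $X$ must either be disjoint from $B$ or meet $B$ transversally in a single point, any other configuration ``contradicting our standing hypothesis''. This is false. Take $X=\mathbb{F}_{1}$ and $B\sim C_{0}+3F$ an ample section with $B^{2}=5$: the complement is affine, $B$ is smooth and rational, yet the unique $(-1)$-curve $C_{0}$ satisfies $C_{0}\cdot B=2$. Contracting it produces a \emph{cuspidal} cubic in $\mathbb{P}^{2}$ (this is precisely the model $C_{e}=\{yz^{e}+x^{e+1}=0\}$ with $e=2$ in Example \ref{exa:Models} of the paper), and no contradiction with smoothness or ampleness arises --- the contraction simply leaves the class of pairs you are inducting on. So the ``intersection-theoretic bookkeeping'' cannot rule this out, and your descent stalls on surfaces that are themselves in the list. (Two smaller points: since $X\setminus B$ is affine, every complete curve meets $B$, so your first reduction ``contract $(-1)$-curves disjoint from $B$'' is vacuous; and even where the descent does work, identifying the minimal model of $X$ does not by itself prove that $X$ \emph{equals} $\mathbb{P}^{2}$ or $\mathbb{F}_{n}$ --- you would still need to classify which blow-ups of a pair in the list again yield a pair with irreducible smooth boundary and affine complement.)

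The paper avoids all of this by running the $(K_{X}+B)$-MMP instead of the $K_{X}$-MMP: by adjunction $(K_{X}+B)\cdot B=-2$, so $K_{X}+B$ is not nef, and for a $(K_{X}+B)$-extremal rational curve $R$ the ampleness of $B$ (Goodman) gives $B\cdot R>0$, whence $K_{X}\cdot R<-1$ and $R^{2}\geq0$. This rules out birational extremal contractions altogether, so the extremal contraction is of fiber type and one lands immediately on $\mathbb{P}^{2}$ (with $B$ a smooth rational curve of degree $1$ or $2$) or on a $\mathbb{P}^{1}$-bundle with $B$ a section, with no descent through intermediate non-minimal surfaces. If you want to keep your step-by-step approach, the fix is to make the boundary part of the discrepancy computation in exactly this way, rather than trying to preserve smoothness of $B$ under individual blow-downs.
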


\begin{proof}
The log-canonical divisor $K_{X}+B$ is not nef since $(K_{X}+B)\cdot B=-2$
by adjunction formula. Given a $K_{X}+B$ extremal smooth rational
curve $R$ on $X$, the conditions $B\cdot R>0$ and $(K_{X}+B)\cdot R<0$
imply that $R^{2}\geq0$. If $R^{2}>0$ then $X$ is a smooth log
del Pezzo surface of Picard rank $1$, hence is isomorphic to $\mathbb{P}^{2}$,
and $B$ is either a line or a smooth conic. If $R^{2}=0$, then the
associated extremal contraction is a Zariski locally trivial $\mathbb{P}^{1}$-bundle
$h:X\to C$ over a smooth projective curve $C$ and $B$ is a section
of $h$. Thus, $C\cong B\cong\mathbb{P}^{1}$ and $(X,h)\cong(\mathbb{F}_{n},\pi_{n})$
for some $n\geq0$. 
\end{proof}
For a pair $(\mathbb{F}_{n},B)$ as in Lemma \ref{lem:Pairs} b),
we denote by $C_{0}\subset\mathbb{F}_{n}$ a section of $\pi_{n}$
with self-intersection $C_{0}^{2}=-n$ and by $F$ a closed fiber
of $\pi_{n}$. Recall \cite[Corollary V.2.18]{HartBook} that for
$m\geq1$, the complete linear system $|C_{0}+mF|$ contains prime
divisors if and only if $m\geq n$. Since $B$ is a section of $\pi_{n}$,
we have $B\sim C_{0}+\frac{1}{2}(B^{2}+n)F$, where $B^{2}\geq n+2$
because $B$ is ample. For a fixed $d\geq2$, the Hirzebruch surfaces
$\mathbb{F}_{n}$ containing an ample section $B$ with $B^{2}=d$
are those of the form $\mathbb{F}_{d-2i}$, $i=1,\ldots,\left\lfloor \tfrac{d}{2}\right\rfloor $,
with $B$ belonging to the complete linear system $|C_{0}+(d-i)F|$. 

Since the divisor class group of $\mathbb{F}_{n}$ is freely generated
by the classes of $F$ and of a section of $\pi_{n}$, the divisor
class group of $\mathbb{F}_{n}\setminus B$ is freely generated by
the class of $F|_{\mathbb{F}_{n}\setminus B}$. A canonical divisor
$K_{\mathbb{F}_{n}}$ of $\mathbb{F}_{n}$ being linearly equivalent
to $-2C_{0}-(n+2)F$, we have $K_{\mathbb{F}_{n}}\sim-2B+(B^{2}-2)F$
and hence $K_{\mathbb{F}_{n}\setminus B}\sim(B^{2}-2)F|_{\mathbb{F}_{n}\setminus B}$.
A result due to Danilov-Gizatullin asserts conversely that the integers
$B^{2}$ are a complete invariant of the isomorphism classes of surfaces
$\mathbb{F}_{n}\setminus B$, namely:
\begin{thm}
\label{thm:DanGiz-Isom}\cite[Theorem 5.8.1]{GiDa77} (see also \cite[Corollary 4.8]{CNR07},
\cite[§3.1]{DubFin14}, \cite{FKZ09} and \cite[Corollary 6.2.4]{FKZ11})
The isomorphism class of the complement of an ample section $B$ in
a Hirzebruch surface $\mathbb{F}_{n}$ depends only on the self-intersection
$B^{2}$ of $B$. 
\end{thm}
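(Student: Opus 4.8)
The plan is to reduce the comparison of two complements $\mathbb{F}_{n}\setminus B$ and $\mathbb{F}_{n'}\setminus B'$ with $B^{2}=(B')^{2}=d$ to the combinatorics of their SNC boundary chains, and to exhibit a common \emph{standard} completion depending only on $d$. By Lemma \ref{lem:Pairs} together with the numerical discussion preceding the statement, every surface of the form $\mathbb{F}_{n}\setminus B$ with $B^{2}=d$ arises from a pair $(\mathbb{F}_{d-2i},B)$ with $B\in|C_{0}+(d-i)F|$, $i=1,\ldots,\lfloor d/2\rfloor$, so it suffices to connect all of these by isomorphisms. First I would fix a pair $(\mathbb{F}_{n},B)$ and equip $V=\mathbb{F}_{n}\setminus B$ with an $\mathbb{A}^{1}$-fibration of affine type $\rho\colon V\to\mathbb{A}^{1}$, for instance by performing a sequence of blow-ups of $\mathbb{F}_{n}$ at a point $p\in B$ and at infinitely near points, all of which are isomorphisms over $V$ since their centres lie on the boundary. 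This turns $\pi_{n}$ into a $\mathbb{P}^{1}$-fibration $\bar{\rho}\colon\tilde{X}\to\mathbb{P}^{1}$ restricting to $\rho$ on $V$; by \S\ref{subsec:P1-A1-Fib} its relatively minimal $\mathbb{P}^{1}$-fibered completion has boundary a rational chain $\tilde{B}\subset\tilde{X}$, which we may arrange to contain the $0$-curve $F_{\infty}=\bar{\rho}^{-1}(\infty)$.

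The second step is to bring this chain into \emph{standard form}. Using elementary transformations of the $\mathbb{P}^{1}$-fibration, namely blowing up a point of a $0$-curve in the boundary and contracting the adjacent component, all of which are isomorphisms over $V$ because they are centred on and contract boundary components, I would normalise the type of $\tilde{B}$ to a zigzag $[0,0,w_{2},\ldots,w_{s}]$ with $w_{j}\le-1$. The admissible moves on such zigzags permit sliding the two leading zeros along the chain and reversing the tail $[w_{2},\ldots,w_{s}]$. I would then read off the tail directly from the numerical data recorded before the statement, using $B\sim C_{0}+\tfrac{1}{2}(B^{2}+n)F$, $B^{2}=d$ and $K_{\mathbb{F}_{n}}\sim-2B+(B^{2}-2)F$. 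The point is that the canonical class $K_{V}\sim(d-2)F|_{V}$ is an isomorphism invariant of $V$, and together with the structure of $\tilde{B}$ it pins down the length and weights of the tail as functions of $d$ alone, independently of $n$ and of the chosen section $B$.

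The third and decisive step is the \emph{uniqueness} of the standard form: two affine surfaces whose standard boundary zigzags coincide up to reversion of the tail are isomorphic, and conversely every chain completion of a fixed $V$ reduces, through the isomorphisms over $V$ above, to one and the same standard zigzag up to reversion. Granting this, the first two steps produce for each pair $(\mathbb{F}_{d-2i},B)$ the same standard completion up to reversion, whence all the surfaces $\mathbb{F}_{d-2i}\setminus B$ with $B^{2}=d$ are mutually isomorphic, which is the assertion.

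The main obstacle is precisely this uniqueness statement, which is the technical heart of the Danilov--Gizatullin theory. One must show that the equivalence relation generated by the elementary transformations that are isomorphisms over $V$ identifies any two chain completions of $V$ with a common standard representative, and that no further modulus survives beyond reversion of the tail. Establishing this requires a careful analysis of how inner elementary transformations act on the weights of a zigzag, a proof that such inner moves already suffice to reach standard form, and a reconstruction argument showing that $V$ is recovered up to isomorphism from its standard zigzag. Everything else in the argument is bookkeeping with the numerical invariants supplied by Lemma \ref{lem:Pairs} and \S\ref{subsec:P1-A1-Fib}.
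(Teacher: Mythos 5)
Your reduction to standard boundary zigzags establishes only that the \emph{discrete} combinatorial invariant of a chain completion of $\mathbb{F}_{n}\setminus B$ depends only on $d=B^{2}$; the decisive third step, namely that ``two affine surfaces whose standard boundary zigzags coincide up to reversion of the tail are isomorphic,'' is not a valid general principle. For Gizatullin surfaces at large, the isomorphism classes with a fixed standard zigzag typically form positive-dimensional families, because the completion is built from a Hirzebruch surface by blow-ups whose centres (proper and infinitely near) can move along the boundary components, and this continuous modulus is invisible to the weights of the zigzag. What makes the Danilov--Gizatullin theorem remarkable is precisely that for the particular zigzags arising here the modulus disappears; that is the entire content of the statement, so your step 3 assumes what is to be proved, or at best defers the whole difficulty to an unproved ``reconstruction argument.'' Nothing in your steps 1 and 2 --- the computation of $K_{V}\sim(d-2)F|_{V}$, the normalisation of the chain by elementary transformations --- touches the positions of those base points, which is where the real work lies.

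The paper's own proof (assembled in Remark \ref{rem:TwoLemmas-give-GDIso} from Lemma \ref{lem:Special-Pencil-Reso} and Lemma \ref{lem:Uniqueness-Reduced-A1Fib}) takes a different and more concrete route: for a general point $q\in B$ the pencil $\mathcal{P}_{q}$ has a \emph{reduced} singular member $C+F_{q}$, hence induces a smooth $\mathbb{A}^{1}$-fibration $\pi_{q}:\mathbb{F}_{n}\setminus B\to\mathbb{A}^{1}$; resolving $\mathcal{P}_{q}$ and contracting down to $\mathbb{F}_{1}$ exhibits $\mathbb{F}_{n}\setminus B$ explicitly, as a fibered surface over $\mathbb{A}^{1}$, as the surface $W_{d}$ obtained by gluing two copies of $\mathbb{A}^{1}\times\mathbb{A}^{1}$ along $(\mathbb{A}^{1}\setminus\{0\})\times\mathbb{A}^{1}$ via $(x,v_{+})\mapsto(x,x^{2-d}v_{+}+x^{1-d})$, a model depending only on $d$. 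If you wish to retain the zigzag framework you must supplement it with an argument of exactly this kind (compare \cite{FKZ09}), showing that the continuous parameters of the completion can be normalised away; the combinatorics alone cannot deliver the isomorphism.
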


\begin{rem}
\label{rem:Torsor} For a pair $(\mathbb{F}_{n},B)$ as in Lemma \ref{lem:Pairs}
b) with $B^{2}=d$, the closed immersion $B\hookrightarrow\mathbb{F}_{n}$
is determined by a surjection $\mathcal{O}_{\mathbb{P}^{1}}\oplus\mathcal{O}_{\mathbb{P}^{1}}(-n)\to\mathcal{L}$
onto an invertible sheaf $\mathcal{L}$ on $\mathbb{P}^{1}$, with
kernel $\mathcal{K}$ isomorphic to $\mathcal{O}_{\mathbb{P}^{1}}(-\frac{1}{2}(d+n))$.
The locally trivial $\mathbb{A}^{1}$-bundle $\nu=\pi_{n}|_{\mathbb{F}_{n}\setminus B}:\mathbb{F}_{n}\setminus B\to\mathbb{P}^{1}$
thus carries the additional structure of a non-trivial torsor under
the line bundle associated to the invertible sheaf $\mathcal{L}^{\vee}\otimes\mathcal{K}\cong\mathcal{O}_{\mathbb{P}^{1}}(-d)$.
Isomorphism classes of such $\mathbb{A}^{1}$-bundles are in one-to-one
correspondence with the elements of the projective space $\mathbb{P}(\mathrm{Ext}^{1}(\mathcal{L},\mathcal{K}))\cong\mathbb{P}(H^{1}(\mathbb{P}^{1},\mathcal{O}_{\mathbb{P}^{1}}(-d)))\cong\mathbb{P}^{d-2}$.
By \cite[Remark 4.8.6]{GiDa77} (see also \cite[§3.1]{DubFin14} and
\cite[Proposition 2]{DubAnnalen15}), every non-trivial $\mathcal{O}_{\mathbb{P}^{1}}(-d)$-torsor
arises as a restriction $\pi_{n}|_{\mathbb{F}_{n}\setminus B}:\mathbb{F}_{n}\setminus B\to\mathbb{P}^{1}$
for some pair $(\mathbb{F}_{n},B)$ as in Lemma \ref{lem:Pairs} b)
with $B^{2}=d$. 
\end{rem}

\begin{example}
\label{exa:Models} The pairs $(X,B)$ of Lemma \ref{lem:Pairs} a)
are unique up to isomorphism. In particular every affine surface isomorphic
to the complement of a smooth conic in $\mathbb{P}^{2}$ is isomorphic
to the complement of the conic $Q_{0}=\{xz+y^{2}=0\}$ in $\mathbb{P}_{[x;y:;z]}^{2}$.
A model of an affine surface $\mathbb{F}_{n}\setminus B$ with $B^{2}=d$
is given for $d=2e\geq2$ by the complement in $\mathbb{F}_{0}=\mathbb{P}_{[u_{0}:u_{1}]}^{1}\times\mathbb{P}_{[v_{0}:v_{1}]}^{1}$
of the section $\Delta_{e}=\{u_{1}^{e}v_{0}-u_{0}^{e}v_{1}=0\}$ of
$\pi_{0}=\mathrm{pr}_{1}$, and for $d=2e+1\geq3$ by the complement
in $\pi_{1}:\mathbb{F}_{1}\to\mathbb{P}^{1}$ viewed as the blow-up
of $\mathbb{P}_{[x:y:z]}^{2}$ at the point $p=[0:1:0]$ of the proper
transform of the rational cuspidal curve $C_{e}=\{yz^{e}+x^{e+1}=0\}$. 
\end{example}

The next examples illustrate some other representatives of isomorphism
classes of affine surfaces $\mathbb{F}_{n}\setminus B$. 
\begin{example}
\cite{DubFin14} For every $d\geq2$ and every pair of integers $p,q\geq1$
such that $p+q=d$, the geometric quotient $S_{d}$ of the smooth
affine threefold $X_{p,q}=\{x^{p}v-y^{q}u=1\}$ in $\mathbb{A}^{4}$
by the free $\mathbb{G}_{m}$-action defined by $\lambda\cdot(x,y,u,v)=(\lambda x,\lambda y,\lambda^{-q}u,\lambda^{-p}v)$
is a representative of the isomorphism class of surfaces of the form
$\mathbb{F}_{n}\setminus B$ such that $B^{2}=d$. Indeed, $S_{d}$
is isomorphic to the complement in the geometric quotient 
\[
\pi_{|p-q|}:\mathbb{F}_{|p-q|}\cong\mathbb{P}(\mathcal{O}_{\mathbb{P}^{1}}(p)\oplus\mathcal{O}_{\mathbb{P}^{1}}(q))\to\mathbb{P}^{1}=\mathrm{Proj}(k[x,y])
\]
of $(\mathbb{A}^{2}\setminus\{0\})\times(\mathbb{A}^{2}\setminus\{0\})$
by the $\mathbb{G}_{m}^{2}$-action $(\lambda,\mu)\cdot(x,y,u,v)=(\lambda x,\lambda y,\lambda^{-p}\mu u,\lambda^{-q}\mu v)$
of the ample section $B$ of $\pi_{|p-q|}$ with self-intersection
$d$ determined by the vanishing of the polynomial $x^{p}v-y^{q}u$
of bi-homogeneous degree $(0,1)$. 
\end{example}

\begin{example}
\cite{DubAnnalen15} For every $d\geq2$, the surface $W_{d}$ in
$\mathbb{A}^{4}=\mathrm{Spec}(k[x_{1},x_{2},x_{2},x_{4}])$ defined
by the equations 
\[
x_{1}x_{3}-x_{2}(x_{2}+1)=0,\;x_{2}^{d-2}x_{4}-x_{3}^{d-1}=0,\;x_{1}^{d-2}x_{4}-(x_{2}+1)^{d-2}x_{3}=0
\]
is a representative of the isomorphism class of surfaces of the form
$\mathbb{F}_{n}\setminus B$ such that $B^{2}=d$. Indeed, the morphism
$\nu:W_{d}\to\mathbb{P}^{1}$, $(x_{1},x_{2},x_{3},x_{4})\mapsto[x_{1}:x_{2}+1]=[x_{2}:x_{3}]$
is a locally trivial $\mathbb{A}^{1}$-bundle with local trivializations
\[
\nu^{-1}(\mathbb{P}^{1}\setminus[0:1])\cong\mathrm{Spec}(k[w][x_{4}])\quad\textrm{and}\quad\nu^{-1}(\mathbb{P}^{1}\setminus[1:0])\cong\mathrm{Spec}(k[w'][x_{1}])
\]
and gluing isomorphism $(w,x_{4})\mapsto(w',x_{1})=(w^{-1},w^{d}x_{4}-w^{d-1})$,
hence is a torsor under the line bundle associated to $\mathcal{O}_{\mathbb{P}^{1}}(-d)$.
By Remark \ref{rem:Torsor}, the surface $W_{d}$ isomorphic to $\mathbb{F}_{n}\setminus B$
for some pair $(\mathbb{F}_{n},B)$ as in Lemma \ref{lem:Pairs} b)
with $B^{2}=d$. 

The surface $W_{2}$ is isomorphic to the surface in $\mathbb{A}^{3}=\mathrm{Spec}(k[x,y,z])$
given by the equation $xy=z(z+1)$. For $d\geq3$, the morphism $W_{d}\to\mathbb{A}^{3}$,
$(x_{1},x_{2},x_{3},x_{4})\mapsto(x_{1},x_{4},x_{2})$ has image equal
to the non-normal surface $V_{d}$ given by the equation $x^{d-1}y=z(z+1)^{d-1}$
and the induced morphism $\nu_{d}:W_{d}\to V_{d}$ is finite and birational,
expressing $W_{d}$ as the normalization of $V_{d}$. This recovers
the other description of representative of surfaces $\mathbb{F}_{n}\setminus B$
such that $B^{2}=d$ as normalizations of surfaces of the form $V_{d}$
given in \cite[§1.0.8]{FKZ11}. 
\end{example}

\section{\label{sec:A1-Fib-P1}Families of $\mathbb{A}^{1}$-fibrations of
complete type}

Since they have torsion class groups, the affine plane $\mathbb{A}^{2}=\mathbb{P}^{2}\setminus L$
and the complements of smooth conics in $\mathbb{P}^{2}$ do not admit
$\mathbb{A}^{1}$-fibrations over complete curves. In contrast, a
surface $\mathbb{F}_{n}\setminus B$ admits a smooth $\mathbb{A}^{1}$-fibration
$\pi_{n}|_{\mathbb{F}_{n}\setminus B}:\mathbb{F}_{n}\setminus B\to\mathbb{P}^{1}$.
In this section, we are interested in the properties of certain $\mathbb{A}^{1}$-fibrations
$\mathbb{F}_{n}\setminus B\to\mathbb{P}^{1}$ with multiple fibers. 
\begin{lem}
\label{lem:Degenerate-Fiber-subscheme}Let $(\mathbb{F}_{n},B)$ be
a pair as in Lemma \ref{lem:Pairs} b), let $q$ be a point of $B$
and let $m\geq1$. Then the linear subsystem $\mathcal{Z}_{q}(m)$
of the complete linear system $|B+mF|$ consisting of divisors intersecting
$B$ with multiplicity $B^{2}+m$ at $q$ has dimension $m$. Furthermore,
the open subset $\mathcal{U}_{q}(m)$ of $\mathcal{Z}_{q}(m)$ consisting
of prime divisors is Zariski dense. 
\end{lem}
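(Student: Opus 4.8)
The plan is to work on the Hirzebruch surface $\mathbb{F}_n$ with its section $B$ satisfying $B\sim C_0+\frac{1}{2}(B^2+n)F$, and to compute the dimension of $|B+mF|$ together with the codimension of the tangency condition at $q$. First I would compute $\dim|B+mF|$ using the fact that $B+mF\sim C_0+\frac{1}{2}(B^2+n)F+mF=C_0+(\frac{1}{2}(B^2+n)+m)F$, and apply Riemann--Roch or the standard formula for $h^0$ of line bundles on $\mathbb{F}_n$ (as in \cite[Corollary V.2.18]{HartBook}). Since $B^2\geq n+2$ ensures the relevant coefficient of $F$ is large enough that higher cohomology vanishes, I expect $\dim|B+mF|=h^0(\mathbb{F}_n,\mathcal{O}(B+mF))-1$ to come out to some explicit value, say $N$.

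**The tangency condition.** The key step is to understand the condition that a divisor $D\in|B+mF|$ meets $B$ at $q$ with multiplicity exactly $B^2+m$. Note that $D\cdot B=(B+mF)\cdot B=B^2+m$ (using $F\cdot B=1$), so $B^2+m$ is the total intersection number of $D$ with $B$; requiring multiplicity $B^2+m$ at the single point $q$ means $D$ meets $B$ \emph{only} at $q$, i.e. the scheme-theoretic intersection $D\cap B$ is the length-$(B^2+m)$ divisor $(B^2+m)\cdot q$ on $B\cong\mathbb{P}^1$. I would identify the restriction map $H^0(\mathbb{F}_n,\mathcal{O}(B+mF))\to H^0(B,\mathcal{O}(B+mF)|_B)$, where $\mathcal{O}(B+mF)|_B$ has degree $B^2+m$ on $B\cong\mathbb{P}^1$, so its sections form a space of dimension $B^2+m+1$. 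The tangency condition is that the image section, a form of degree $B^2+m$ on $\mathbb{P}^1$, is proportional to the $(B^2+m)$-th power of the linear form cutting out $q$; this is a single point in $\mathbb{P}(H^0(B,\mathcal{O}(B^2+m)))$, hence imposes $B^2+m$ linear conditions on the sections over $\mathbb{F}_n$ that already vanish at $q$.

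**Combining.** I would show that the restriction map is surjective (its cokernel sits in $H^1(\mathbb{F}_n,\mathcal{O}(B+mF-B))=H^1(\mathbb{F}_n,\mathcal{O}(mF))$, which vanishes for $m\geq 0$ since $\mathcal{O}(mF)=\pi_n^*\mathcal{O}_{\mathbb{P}^1}(m)$ has no higher cohomology). Surjectivity lets me compute $\dim\mathcal{Z}_q(m)$ exactly: the preimage of the fixed point $[\ell_q^{B^2+m}]$ in the target projective space is a linear subsystem of $|B+mF|$ of codimension $B^2+m$, so $\dim\mathcal{Z}_q(m)=N-(B^2+m)$. With $N$ computed above this should simplify precisely to $m$. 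The main obstacle will be bookkeeping the dimension count cleanly and confirming surjectivity of the restriction, which is really the crux making the codimension equal exactly $B^2+m$ rather than merely at most that.

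**Density of prime divisors.** For the last assertion I would argue that $\mathcal{U}_q(m)$ is open in $\mathcal{Z}_q(m)$ by semicontinuity of irreducibility/reducedness, so it suffices to exhibit one prime divisor in $\mathcal{Z}_q(m)$, or equivalently to rule out that every member of $\mathcal{Z}_q(m)$ is reducible or non-reduced. The natural approach is to note that a member of $\mathcal{Z}_q(m)$ meeting $B$ only at $q$ and having the right class $B+mF$ generically cannot split off a fiber component $F$ (which would meet $B$ elsewhere unless that fiber passes through $q$) without dropping into a lower-dimensional subfamily; a short analysis of the finitely many ways $B+mF$ can decompose, combined with the dimension count, shows the reducible and non-reduced loci are proper closed subsets, so $\mathcal{U}_q(m)$ is Zariski dense. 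I expect this density step to be routine once the structure of effective divisors in $|B+mF|$ on $\mathbb{F}_n$ is made explicit via the $\mathbb{P}^1$-bundle $\pi_n$.
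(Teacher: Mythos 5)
Your proposal follows exactly the route of the paper's own proof: the short exact sequence $0\to\mathcal{O}_{\mathbb{F}_n}(mF)\to\mathcal{O}_{\mathbb{F}_n}(B+mF)\to\mathcal{O}_B(B^2+m)\to0$, surjectivity of the restriction map from the vanishing of $H^1(\mathbb{F}_n,\mathcal{O}_{\mathbb{F}_n}(mF))$, and the identification of the tangency condition with the preimage of the line spanned by the $(B^2+m)$-th power of the linear form on $B\cong\mathbb{P}^1$ cutting out $q$. Your density argument is also the paper's in substance: since $(B+mF)\cdot F=1$, every member has a unique reduced horizontal (section) component, any fiber component of a member not containing $B$ must be $F_q$ (else it would meet $B$ away from $q$), and the resulting non-prime loci are proper closed (indeed linear) subsets. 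That part is fine, though you should write out the decomposition explicitly rather than appeal to "semicontinuity of irreducibility".

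The genuine gap is the one computation you defer with "this should simplify precisely to $m$": it does not. You have $h^0(\mathbb{F}_n,\mathcal{O}_{\mathbb{F}_n}(B+mF))=\dim\ker r+\dim H^0(B,\mathcal{O}_B(B^2+m))=(m+1)+(B^2+m+1)$, so $N=B^2+2m+1$ and $N-(B^2+m)=m+1$. Equivalently, the subspace of sections cut out by the condition is $r^{-1}\bigl(k\cdot\ell_q^{\,B^2+m}\bigr)$, of dimension $\dim\ker r+1=m+2$, whose projectivization is a $\mathbb{P}^{m+1}$, not a $\mathbb{P}^{m}$. (The paper's proof passes from "kernel of dimension $m+1$" directly to "$\mathcal{Z}_q(m)\cong\mathbb{P}^m$", i.e.\ it projectivizes the kernel rather than the preimage of the line; the extra dimension is carried by the members meeting $B$ properly.) You can see concretely that the larger count is what your setup produces: in the situation of Example \ref{exa:BvS-Curves} with $m=1$, the sections $\{(\lambda u_0^2+u_1^2)v_0-u_0u_1v_1=0\}$, $\lambda\in k^*$, all restrict to $\lambda u_0^3$ on $\Delta$ and hence all lie in $\mathcal{U}_q(1)$, yet only $\lambda=1$ is of the form $B_{1,p}$; together with the curves $B_{1,p}$ they give a two-parameter family of prime members. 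So you cannot simply assert that the bookkeeping lands on $m$: either you must locate an additional linear condition implicit in the definition of $\mathcal{Z}_q(m)$ that cuts the dimension down by one, or you must conclude that the dimension is $m+1$ and flag the discrepancy with the statement. As written, this step of your proof would fail.
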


\begin{proof}
Put $d=B^{2}$ and $F_{q}=\pi_{n}^{-1}(\pi_{n}(q))$. Let $\mathcal{I}_{B}\cong\mathcal{O}_{\mathbb{F}_{n}}(-B)$
denote the ideal sheaf of $B$ and consider the long exact sequence
of cohomology associated to the short exact sequence of coherent sheaves
\[
0\to\mathcal{I}_{B}\otimes\mathcal{O}_{\mathbb{F}_{n}}(B+mF)\cong\mathcal{O}_{\mathbb{F}_{n}}(mF)\to\mathcal{O}_{\mathbb{F}_{n}}(B+mF)\to\mathcal{O}_{\mathbb{F}_{n}}(B+mF)|_{B}\cong\mathcal{O}_{B}(d+m)\to0
\]
Since $H^{1}(\mathbb{F}_{n},\mathcal{O}_{\mathbb{F}_{n}}(mF))=0$,
the map $H^{0}(\mathbb{F}_{n},\mathcal{O}_{\mathbb{F}_{n}}(B+mF))\to H^{0}(B,\mathcal{O}_{B}(d+m))$
is surjective, with kernel of dimension $\dim H^{0}(\mathbb{F}_{n},\mathcal{O}_{\mathbb{F}_{n}}(mF))=m+1$.
It follows that $\mathcal{Z}_{q}(m)\cong\mathbb{P}^{m}$. A singular
element of $\mathcal{Z}_{q}(m)$ decomposes as the sum of a prime
member $B_{m'}$ of the complete linear system $|B+m'F|$, $0\leq m'<m$,
intersecting $B$ with multiplicity $d+m'$ at $q$ and of $(m-m')F_{q}$.
By the same computation as above, these elements form a closed linear
subspace $\mathcal{Z}_{q}(m')\cong\mathbb{P}^{m'}$ of $\mathcal{Z}_{q}(m)$
and so, $\mathcal{U}_{q}(m)=\mathcal{Z}_{q}(m)\setminus\bigcup_{m'=0}^{m-1}\mathcal{Z}_{q}(m')$
is a dense open subset of $\mathcal{Z}_{q}(m)$. 
\end{proof}
\begin{example}
\label{exa:BvS-Curves}Let $\mathbb{F}_{0}=\mathbb{P}_{[u_{0}:u_{1}]}^{1}\times\mathbb{P}_{[v_{0}:v_{1}]}^{1}$,
$\Delta=\{u_{1}v_{0}-u_{0}v_{1}=0\}$ and let $q=([0:1],[0:1])$.
For every $m\geq1$, denote by $\mathcal{V}_{m}\subset k[t]$ the
$m$-dimensional vector space of monic polynomials of degree $m$.
Writing $P(u,v)=p(\tfrac{v}{u})u^{m}$ for the homogenization of a
polynomial $p(t)\in k[t]$, the map which associates to $p\in\mathcal{V}_{m}$
the section 

\[
B_{m,p}=\{u_{0}P(u_{0},u_{1})v_{1}-(u_{0}^{m+1}+u_{1}P(u_{0},u_{1}))v_{0}=0\}
\]
 of $\pi_{0}=\mathrm{pr}_{1}$ defines an open immersion $\mathcal{V}_{m}\to\mathcal{U}_{q}(m)\subset\mathcal{Z}_{q}(m)\cong\mathbb{P}^{m}$.
These curves $B_{m,p}$ were considered by Blanc-van Santen \cite[Section 2]{BlavS19}
for the fact that their restrictions $B_{m,p}\cap(\mathbb{F}_{0}\setminus\Delta)\cong\mathbb{A}^{1}$
provide examples of non-equivalent closed embeddings of the affine
line into the smooth affine quadric surface $\mathbb{F}_{0}\setminus\Delta$. 
\end{example}

Let $(\mathbb{F}_{n},B)$ be a pair as in Lemma \ref{lem:Pairs} b),
let $m\geq2$ and let $B_{m}$ be a section of $\pi_{n}$ corresponding
to a closed point of the scheme $\mathcal{U}_{q}(m)$ for some $q\in B$.
Let $\mathcal{L}_{q,B_{m}}\subset|B+mF|$ be the pencil generated
by the divisors $B_{m}$ and $B+mF_{q}$. 
\begin{lem}
\label{lem:L_qB-members}Every member of $\mathcal{L}_{q,B_{m}}$
other than $B+mF_{q}$ is a smooth rational curve. 
\end{lem}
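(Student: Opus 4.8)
The plan is to separate the statement into two independent facts: that every prime member of $|B+mF|$ is automatically a smooth rational curve, and that the pencil $\mathcal{L}_{q,B_m}$ contains exactly one non-prime member, namely $B+mF_q$. For the first fact I would observe that every $D\in|B+mF|$ satisfies $D\cdot F=(B+mF)\cdot F=1$. If such a $D$ is prime, then it is integral and not contained in a fiber, so $\pi_n|_D\colon D\to\mathbb{P}^1$ is a finite morphism of degree $D\cdot F=1$, i.e.\ a finite birational morphism onto the normal curve $\mathbb{P}^1$. As a finite birational morphism onto a normal variety is an isomorphism, $D$ is then a section of $\pi_n$, hence a smooth rational curve. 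It therefore suffices to show that every member of $\mathcal{L}_{q,B_m}$ other than $B+mF_q$ is prime, i.e.\ lies in the dense open subset $\mathcal{U}_q(m)\subset\mathcal{Z}_q(m)$ of Lemma \ref{lem:Degenerate-Fiber-subscheme}.

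Next I would analyse the non-prime locus using the linear structure established in the proof of Lemma \ref{lem:Degenerate-Fiber-subscheme}. There the complement $\mathcal{Z}_q(m)\setminus\mathcal{U}_q(m)$ is exhibited as $\bigcup_{m'=0}^{m-1}\mathcal{Z}_q(m')$, each $\mathcal{Z}_q(m')\cong\mathbb{P}^{m'}$ being embedded in $\mathcal{Z}_q(m)\cong\mathbb{P}^m$ as a linear subspace via $D'\mapsto D'+(m-m')F_q$. The point to verify is that these subspaces are nested: if $D'\in\mathcal{Z}_q(m')$ and $m'<m''$, then $D'+(m''-m')F_q$ lies in $|B+m''F|$ and still meets $B$ with multiplicity $d+m''$ at $q$, so it belongs to $\mathcal{Z}_q(m'')$. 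Hence $\mathcal{Z}_q(0)\subset\cdots\subset\mathcal{Z}_q(m-1)$, and the entire non-prime locus equals the single hyperplane $\mathcal{Z}_q(m-1)\cong\mathbb{P}^{m-1}\subset\mathbb{P}^m$.

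Finally I would conclude by elementary projective geometry. The pencil $\mathcal{L}_{q,B_m}$ is a line in $\mathcal{Z}_q(m)\cong\mathbb{P}^m$, since both of its generators lie in the linear subspace $\mathcal{Z}_q(m)$. It passes through $B_m\in\mathcal{U}_q(m)$, which by definition does not lie on the hyperplane $\mathcal{Z}_q(m-1)$; thus the line is not contained in that hyperplane and meets it in a single point. On the other hand $B+mF_q$ is manifestly non-prime, being non-reduced along $F_q$ (here $m\geq 2$), so it lies on $\mathcal{Z}_q(m-1)$ and is therefore that unique intersection point. Consequently every member of $\mathcal{L}_{q,B_m}$ other than $B+mF_q$ avoids $\mathcal{Z}_q(m-1)$, is prime, and hence is a smooth rational curve by the first step.

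The only step demanding care is the nesting verified in the second paragraph: this is precisely what upgrades a union of linear subspaces into a single hyperplane, and so makes the ``a line meets a hyperplane in exactly one point'' argument applicable while pinning down $B+mF_q$ as the unique non-prime member. Everything else is either imported verbatim from Lemma \ref{lem:Degenerate-Fiber-subscheme} or a formal consequence of the normality of $\mathbb{P}^1$, so I expect no genuine obstacle beyond this bookkeeping.
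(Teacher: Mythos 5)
Your proof is correct, but it takes a genuinely different route from the paper's. The paper resolves the pencil: it performs the $B^{2}+2m$ blow-ups of the infinitely near base points at $q$ to obtain a $\mathbb{P}^{1}$-fibration $\tilde{\gamma}:\tilde{\mathbb{F}}_{n}\to\mathbb{P}^{1}$, notes via the decomposition of singular members from the proof of Lemma \ref{lem:Degenerate-Fiber-subscheme} that any singular member of $\mathcal{L}_{q,B_{m}}$ must contain $F_{q}$ as a component and hence can only be the member whose proper transform lies in the fiber of $\tilde{\gamma}$ containing that of $B\cup F_{q}$, and then deduces smoothness of the remaining members from the smoothness of all other fibers (together with the self-intersection count that forces each member to pass through the base points with multiplicity one). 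You instead never leave the projective space $\mathcal{Z}_{q}(m)\cong\mathbb{P}^{m}$: the nesting $\mathcal{Z}_{q}(0)\subset\cdots\subset\mathcal{Z}_{q}(m-1)$ collapses the non-prime locus to a single hyperplane, the pencil is a line through the prime point $B_{m}$ off that hyperplane and so meets it exactly once, at $B+mF_{q}$, and smoothness of the remaining members follows from the separate observation that a prime member $D$ of $|B+mF|$ has $D\cdot F=1$ and is therefore a section of $\pi_{n}$. Both arguments rest on the same classification of singular members, but yours isolates the two facts at play (uniqueness of the non-prime member; prime members are sections) and avoids the resolution entirely, at the price of the nesting verification, which is the one genuinely new step and which checks out: $D'+(m''-m')F_{q}$ does lie in $\mathcal{Z}_{q}(m'')$ because the fiber $F_{q}$ meets the section $B$ transversely at $q$, so the linear subspaces are indeed totally ordered by inclusion and their union is the hyperplane $\mathcal{Z}_{q}(m-1)$.
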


\begin{proof}
Every divisor in the complete linear system $|B+mF|$ has self-intersection
$B^{2}+2m$. The minimal resolution $\tau:\tilde{\mathbb{F}}_{n}\to\mathbb{F}_{n}$
of the rational map $\gamma:\mathbb{F}_{n}\dashrightarrow\mathbb{P}^{1}$
defined by $\mathcal{L}_{q,B_{m}}$ is obtained by performing $B^{2}+2m$
successive blow-ups with center at $q$ on the successive proper transforms
of $B_{m}$, with exceptional divisors $E_{1},\ldots,E_{B^{2}+2m}$.
Letting $\tau_{*}^{-1}B_{m}$ be the proper transform of $B_{m}$,
the composition $\tilde{\gamma}=\gamma\circ\tau:\tilde{\mathbb{F}}_{n}\to\mathbb{P}^{1}$
is the $\mathbb{P}^{1}$-fibration defined by the complete linear
system $|\tau_{*}^{-1}B_{m}|$. The total transform of $B_{m}$ is
a rational chain $E_{1}\triangleleft\cdots\triangleleft E_{B^{2}+2m}\triangleleft\tau_{*}^{-1}B_{m}$,
where $E_{B^{2}+2m}$ is a section of $\tilde{\gamma}$. Since every
singular member of $\mathcal{L}_{q,B_{m}}$ is the sum of a prime
member $B'$ of the linear system $|B+m'F|$ for some $0\leq m'<m$
and of $(m-m')F_{q}$, every fiber of $\tilde{\gamma}$ other than
that containing the proper transform of $B\cup F_{q}$ is smooth.
This implies that every member of $\mathcal{L}_{q,B_{m}}$ other than
$B+mF_{q}$ is a smooth rational curve. 
\end{proof}
For every $q\in B$, the space of pencils $\mathcal{L}_{q,B_{m}}$
identifies with a dense open subset $\mathcal{S}_{q}(m)$ of the projective
space $\mathbb{P}^{m-1}$ of lines passing through the point of $\mathcal{Z}_{q}(m)\setminus\mathcal{U}_{q}(m)$
corresponding to the reducible divisor $B+mF_{q}$ and a point of
$\mathcal{U}_{q}(m)$. The linear action on $H^{0}(\mathbb{F}_{n},\mathcal{O}_{\mathbb{F}_{n}}(B+mF))$
of the algebraic subgroup $\mathrm{Aut}(\mathbb{F}_{n},B\cup F_{q})$
of automorphisms of $\mathbb{F}_{n}$ preserving $B\cup F_{q}$ induces
an action of $\mathrm{Aut}(\mathbb{F}_{n},B\cup F_{q})$ on $\mathcal{S}_{q}(m)$.
On the other hand, the rational map $\gamma_{q,B_{m}}:\mathbb{F}_{n}\dashrightarrow\mathbb{P}^{1}$
associated to a pencil $\mathcal{L}_{q,B_{m}}$ restricts to a surjective
$\mathbb{A}^{1}$-fibration $\delta_{q,B_{m}}:\mathbb{F}_{n}\setminus B\to\mathbb{P}^{1}$
having $F_{q}\cap(\mathbb{F}_{n}\setminus B)\cong\mathbb{A}^{1}$
as a unique degenerate fiber, of multiplicity $m$. We have the following
characterization:
\begin{lem}
\label{lem:Pencil-Aut-Orbit-Equivalence} Let $(\mathbb{F}_{n},B)$
be a pair as in Lemma \ref{lem:Pairs} b), let $m\geq3$ and let $B_{m}$
and $B_{m}'$ be sections of $\pi_{n}:\mathbb{F}_{n}\to\mathbb{P}^{1}$
corresponding to points of the scheme $\mathcal{U}_{q}(m)$ for some
$q\in B$. Then the $\mathbb{A}^{1}$-fibered surfaces $(\mathbb{F}_{n}\setminus B,\delta_{q,B_{m}})$
and $(\mathbb{F}_{n}\setminus B,\delta_{q,B_{m}'})$ are equivalent
if and only if the pencils $\mathcal{L}_{q,B_{m}}$ and $\mathcal{L}_{q,B_{m}'}$
belong to the same $\mathrm{Aut}(\mathbb{F}_{n},B\cup F_{q})$-orbit. 
\end{lem}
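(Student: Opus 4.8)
The plan is to prove both implications by translating the notion of equivalence of the $\mathbb{A}^{1}$-fibered surfaces into a statement about the geometry of the boundary completions, and then recognizing the relevant automorphisms. Recall that each $\delta_{q,B_{m}}$ is the restriction to $\mathbb{F}_{n}\setminus B$ of the rational map $\gamma_{q,B_{m}}$ associated to the pencil $\mathcal{L}_{q,B_{m}}$, and that by construction this $\mathbb{A}^{1}$-fibration has $B$ as part of its boundary and the unique degenerate fiber $F_{q}\cap(\mathbb{F}_{n}\setminus B)$ of multiplicity $m$. The key structural fact, established in the proof of Lemma \ref{lem:L_qB-members}, is that after the minimal resolution $\tau:\tilde{\mathbb{F}}_{n}\to\mathbb{F}_{n}$ obtained by blowing up $q$ successively $B^{2}+2m$ times, one obtains a relatively minimal $\mathbb{P}^{1}$-fibered completion of $\delta_{q,B_{m}}$ whose boundary is the rational chain coming from the total transform of $B$, the exceptional curves $E_{1},\ldots,E_{B^{2}+2m}$, and the fiber $F_{q}$. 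The reverse implication is essentially immediate: if the two pencils are related by some $\varphi\in\mathrm{Aut}(\mathbb{F}_{n},B\cup F_{q})$, then $\varphi$ preserves $\mathbb{F}_{n}\setminus B$, maps $F_{q}$ to $F_{q}$, and carries $\mathcal{L}_{q,B_{m}}$ to $\mathcal{L}_{q,B_{m}'}$, hence intertwines $\gamma_{q,B_{m}}$ with $\gamma_{q,B_{m}'}$ up to an automorphism of the target $\mathbb{P}^{1}$; restricting to $\mathbb{F}_{n}\setminus B$ yields the desired equivalence of $\mathbb{A}^{1}$-fibered surfaces.

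For the forward implication, suppose $\Psi:\mathbb{F}_{n}\setminus B\to\mathbb{F}_{n}\setminus B$ and $\psi:\mathbb{P}^{1}\to\mathbb{P}^{1}$ realize the equivalence, so $\delta_{q,B_{m}'}\circ\Psi=\psi\circ\delta_{q,B_{m}}$. The first step is to show that $\Psi$ extends to an automorphism of the pair $(\mathbb{F}_{n},B)$. This follows because $(\mathbb{F}_{n},B)$ is an SNC-minimal smooth completion of $\mathbb{F}_{n}\setminus B$ with irreducible boundary $B\cong\mathbb{P}^{1}$, and such a completion is unique: any isomorphism $\Psi$ of the affine surface must extend to an isomorphism of the minimal completions, hence to an automorphism $\bar{\Psi}$ of $\mathbb{F}_{n}$ preserving $B$. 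The second step is to show that $\bar{\Psi}$ also preserves the fiber $F_{q}$. Here one uses that $F_{q}\cap(\mathbb{F}_{n}\setminus B)$ is the \emph{unique} degenerate fiber of $\delta_{q,B_{m}}$ and similarly $F_{q}$ for $\delta_{q,B_{m}'}$; since the equivalence carries degenerate fibers to degenerate fibers and respects multiplicities, $\Psi$ sends $F_{q}\cap(\mathbb{F}_{n}\setminus B)$ onto $F_{q}\cap(\mathbb{F}_{n}\setminus B)$, and taking closures gives $\bar{\Psi}(F_{q})=F_{q}$. Thus $\bar{\Psi}\in\mathrm{Aut}(\mathbb{F}_{n},B\cup F_{q})$.

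It then remains to verify that $\bar{\Psi}$ actually sends the pencil $\mathcal{L}_{q,B_{m}}$ to $\mathcal{L}_{q,B_{m}'}$, which places them in the same $\mathrm{Aut}(\mathbb{F}_{n},B\cup F_{q})$-orbit. Since $\bar{\Psi}$ preserves both $B$ and $F_{q}$, it acts on the complete linear system $|B+mF|$ and fixes the point corresponding to the reducible divisor $B+mF_{q}$. The relation $\delta_{q,B_{m}'}\circ\Psi=\psi\circ\delta_{q,B_{m}}$ means that $\bar{\Psi}$ pulls back the pencil defining $\gamma_{q,B_{m}'}$ to the pencil defining $\gamma_{q,B_{m}}$, up to the automorphism $\psi$ of the target; because both pencils are generated by $B+mF_{q}$ together with a member of $\mathcal{U}_{q}(m)$, and $\bar{\Psi}$ fixes $B+mF_{q}$, we conclude $\bar{\Psi}_{*}\mathcal{L}_{q,B_{m}}=\mathcal{L}_{q,B_{m}'}$. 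The main obstacle I anticipate is the rigidity argument in the forward direction, namely establishing that an arbitrary isomorphism $\Psi$ of the open surface extends to $\mathbb{F}_{n}$ preserving both $B$ and $F_{q}$; the extension across $B$ relies on the SNC-minimality and uniqueness of the completion, while the preservation of $F_{q}$ must be argued carefully from the uniqueness of the degenerate fiber together with the hypothesis $m\geq3$, which guarantees that the multiplicity-$m$ structure rigidly distinguishes $F_{q}$ from the smooth generic fibers.
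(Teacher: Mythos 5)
There is a genuine gap in the forward implication, at precisely the step where the real content of the lemma lies. You assert that because $(\mathbb{F}_{n},B)$ is an SNC-minimal smooth completion with irreducible boundary, ``such a completion is unique'' and hence any isomorphism $\Psi$ of $\mathbb{F}_{n}\setminus B$ extends to a biregular automorphism of $\mathbb{F}_{n}$ preserving $B$. This is false. Surfaces of the form $\mathbb{F}_{n}\setminus B$ are of Gizatullin type with very large automorphism groups, and most of their automorphisms do \emph{not} extend to automorphisms of the completion: the extension $\bar{\Psi}$ is in general only a birational self-map of $\mathbb{F}_{n}$ with base points on $B$, which blows up points of $B$ and contracts the proper transform of $B$. (Compare the elementary case $\mathbb{A}^{2}=\mathbb{P}^{2}\setminus L$: the completion $(\mathbb{P}^{2},L)$ is SNC-minimal with irreducible boundary, yet $(x,y)\mapsto(x,y+x^{2})$ does not extend to $\mathbb{P}^{2}$. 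The Danilov--Gizatullin theorem cited as Theorem \ref{thm:DanGiz-Isom} is itself a symptom of this non-uniqueness.) SNC-minimality only says $B$ cannot be contracted to yield a smaller SNC completion; it does not rigidify the completion against birational modifications supported on $B$.

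The paper's proof confronts exactly this issue: it takes the minimal resolution $\mathbb{F}_{n}\stackrel{\sigma}{\leftarrow}Y\stackrel{\sigma'}{\to}\mathbb{F}_{n}$ of the birational extension $\bar{\Psi}$ and shows that if $\sigma'$ is not an isomorphism, then $\sigma$ must have $B^{2}+1$ proper or infinitely near base points on $B$, whence $\sigma_{*}^{-1}B_{m}\cdot\sigma_{*}^{-1}B\geq m-1\geq2$, so that $\sigma'(\sigma_{*}^{-1}B_{m})$ would be an irreducible \emph{singular} member of $\mathcal{L}_{q,B_{m}'}$ --- contradicting Lemma \ref{lem:L_qB-members}, which says all members other than $B+mF_{q}$ are smooth. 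This is also where the hypothesis $m\geq3$ enters (to get $m-1\geq2$), not, as you suggest at the end, in distinguishing $F_{q}$ from the general fibers by its multiplicity. Your remaining steps (preservation of $F_{q}$ via uniqueness of the degenerate fiber, and the identification $\bar{\Psi}_{*}\mathcal{L}_{q,B_{m}}=\mathcal{L}_{q,B_{m}'}$) are fine once biregularity of $\bar{\Psi}$ is established, and your converse direction agrees with the paper's; but without an argument ruling out base points of $\bar{\Psi}$ on $B$, the proof does not go through.
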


\begin{proof}
Let $\Psi:(\mathbb{F}_{n}\setminus B,\delta_{q,B_{m}})\to(\mathbb{F}_{n}\setminus B,\delta_{q,B_{m}'})$
be an equivalence of $\mathbb{A}^{1}$-fibered surfaces, let $\bar{\Psi}$
be its extension to a birational automorphism of $\mathbb{F}_{n}$
and let $\mathbb{F}_{n}\stackrel{\sigma}{\leftarrow}Y\stackrel{\sigma'}{\to}\mathbb{F}_{n}$
be the minimal resolution of $\bar{\Psi}$. If $\sigma'$ is not an
isomorphism, then $\sigma$ is not an isomorphism and $\Psi$ contracts
$B$ onto a point. By the minimality assumption, the proper transform
$\sigma_{*}^{-1}(B)$ of $B$ is the only $\sigma'$-exceptional $(-1)$-curve
contained in $\sigma^{-1}(B)$. It follows that $\sigma$ has $B^{2}+1$
proper or infinitely near base points on $B$ and hence, since $B\cdot B_{m}=B^{2}+m$,
that $\sigma_{*}^{-1}B_{m}\cdot\sigma_{*}^{-1}(B)\geq m-1\geq2$.
But then, $\sigma'(\sigma_{*}^{-1}B_{m})$ is an irreducible singular
member of $\mathcal{L}_{q,B_{m}'}$, which is impossible by Lemma
\ref{lem:L_qB-members}. Thus, $\sigma'$ is an isomorphism and $\bar{\Psi}$
is an automorphism of $\mathbb{F}_{n}$, which preserves $B$ and
the closure $F_{q}$ of the unique common degenerate fiber $F_{q}\cap(\mathbb{F}_{n}\setminus B)$
of $\delta$ and $\delta'$. Furthermore, $\bar{\Psi}$ maps $B_{m}$
onto a certain smooth member of $\mathcal{L}_{q,B_{m}'}$, hence maps
$\mathcal{L}_{q,B_{m}}$ onto $\mathcal{L}_{q,B_{m}'}$. The converse
implication is clear. 
\end{proof}
\begin{rem}
For every $m\geq2$ and every point $q\in B$, one can find distinct
points $B_{m}$ and $B_{m}'$ in the scheme $\mathcal{U}_{q}(m)$
such that the pencils $\mathcal{L}_{q,B_{m}}$ and $\mathcal{L}_{q,B_{m}'}$
have distinct general members. The associated $\mathbb{A}^{1}$-fibrations
$\delta_{q,B_{m}}$ and $\delta_{q,B_{m}'}$ have distinct general
fibers but share $F_{q}\cap(\mathbb{F}_{n}\setminus B)$ as a degenerate
fiber. This contrasts with $\mathbb{A}^{1}$-fibrations of affine
type for which no curve can be contained simultaneously in fibers
of two $\mathbb{A}^{1}$-fibrations with distinct general fibers,
see \cite[Corollary 2.22]{Dub05}.
\end{rem}

\begin{prop}
\label{prop:UncountableA1FIbComplete}Let $(\mathbb{F}_{n},B)$ be
a pair as in Lemma \ref{lem:Pairs} b). Then for every $m\geq4$,
there exist infinitely many equivalence classes of $\mathbb{A}^{1}$-fibrations
$\pi:\mathbb{F}_{n}\setminus B\to\mathbb{P}^{1}$ with a unique degenerate
fiber of multiplicity $m$. 
\end{prop}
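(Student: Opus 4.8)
The plan is to reduce the statement of Proposition~\ref{prop:UncountableA1FIbComplete} to a dimension count for the orbits of the group $\mathrm{Aut}(\mathbb{F}_{n},B\cup F_{q})$ acting on the space of pencils $\mathcal{S}_{q}(m)$. By Lemma~\ref{lem:Pencil-Aut-Orbit-Equivalence}, for $m\geq 3$ two of the fibrations $\delta_{q,B_m}$ and $\delta_{q,B_m'}$ are equivalent \emph{as $\mathbb{A}^{1}$-fibered surfaces} precisely when the corresponding pencils lie in a single orbit of $\mathrm{Aut}(\mathbb{F}_{n},B\cup F_{q})$ on $\mathcal{S}_{q}(m)$. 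The subtlety is that this only controls equivalences fixing the point $q$; a general equivalence of fibrations need not respect the degenerate fibre over $q$, but the proof of that lemma shows the extension $\bar\Psi$ is an honest automorphism of $\mathbb{F}_{n}$ preserving both $B$ and the closure of the unique degenerate fibre, so any equivalence does send $F_q$ to some $F_{q'}$. Since $\mathrm{Aut}(\mathbb{F}_{n},B)$ acts on the finite set of points $q\in B$ through a finite-dimensional group, it suffices to fix $q$ and bound the orbits of $\mathrm{Aut}(\mathbb{F}_{n},B\cup F_{q})$ on $\mathcal{S}_{q}(m)$.

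Next I would estimate the dimension of $\mathrm{Aut}(\mathbb{F}_{n},B\cup F_{q})$. This is the subgroup of $\mathrm{Aut}(\mathbb{F}_{n})$ preserving the ample section $B$ together with the fibre $F_q$. An automorphism preserving $B$ must preserve the $\mathbb{P}^{1}$-fibration $\pi_n$ (as $B$ is a section and $\pi_n$ is the extremal contraction), hence descends to an automorphism of the base $\mathbb{P}^{1}$; requiring it to fix the base point $\pi_n(q)$ and to preserve the section $B$ cuts the dimension down to a small explicit constant, bounded independently of $m$. Concretely one checks that $\dim \mathrm{Aut}(\mathbb{F}_{n},B\cup F_{q})\leq 3$, and that its induced action on $H^{0}(\mathbb{F}_{n},\mathcal{O}_{\mathbb{F}_{n}}(B+mF))$, hence on $\mathcal{S}_{q}(m)\subset\mathbb{P}^{m-1}$, has all orbits of dimension at most this same constant.

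The key inequality is then purely numerical: $\mathcal{S}_{q}(m)$ is a dense open subset of $\mathbb{P}^{m-1}$, so it has dimension $m-1$, while each $\mathrm{Aut}(\mathbb{F}_{n},B\cup F_{q})$-orbit has dimension at most $3$. For $m\geq 4$ we have $m-1\geq 3$, and the orbit space is therefore the quotient of an $(m-1)$-dimensional variety by a group with orbits of dimension $\leq m-1$; a standard fibre-dimension argument (the union of all orbits meeting a given one has dimension at most $\dim$(orbit)$+\dim$(parameter of the incoming equivalences), which stays bounded) shows that the set of orbits cannot be finite, and in fact has cardinality at least that of $k$. Combining this with Lemma~\ref{lem:Pencil-Aut-Orbit-Equivalence} and Lemma~\ref{lem:L_qB-members} (which guarantees each $\delta_{q,B_m}$ genuinely has a \emph{unique} degenerate fibre, irreducible of multiplicity $m$), we obtain infinitely many pairwise inequivalent $\mathbb{A}^{1}$-fibrations, as required.

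The main obstacle I anticipate is making the orbit-dimension bound rigorous and uniform in $m$: one must verify that $\mathrm{Aut}(\mathbb{F}_{n},B\cup F_{q})$ is genuinely finite-dimensional with a bound independent of $m$ (this uses that $B$ is an ample section, so its stabilizer inside $\mathrm{Aut}(\mathbb{F}_{n})$ is small), and then ensure the set-theoretic quotient argument produces infinitely many classes rather than merely a positive-dimensional ``moduli'' that could conceivably collapse under the finite ambiguity coming from the choice of $q\in B$. The cleanest route is to fix $q$ once and for all, prove the orbit count over that single fibre is infinite of cardinality $\geq |k|$, and observe that ranging over the finitely-many-parameters' worth of admissible $q$ only multiplies the count by a bounded factor, leaving it infinite.
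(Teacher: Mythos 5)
Your overall strategy coincides with the paper's: combine Lemma \ref{lem:L_qB-members} and Lemma \ref{lem:Pencil-Aut-Orbit-Equivalence} to identify equivalence classes with orbits of $\mathrm{Aut}(\mathbb{F}_{n},B\cup F_{q})$ on the open subset $\mathcal{S}_{q}(m)\subset\mathbb{P}^{m-1}$, and then win by a dimension count. But your numerical bound is not strong enough to close the argument. You assert $\dim\mathrm{Aut}(\mathbb{F}_{n},B\cup F_{q})\leq 3$ and then claim that a quotient of an $(m-1)$-dimensional variety by a group whose orbits have dimension $\leq m-1$ cannot have finitely many orbits. That implication is false: a $3$-dimensional group acting on a $3$-dimensional variety can perfectly well have a dense open orbit and only finitely many orbits in total (e.g.\ $\mathrm{PGL}_{2}$ acting on the space $\mathbb{P}^{3}$ of binary cubics). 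So for the borderline case $m=4$, where $\dim\mathcal{S}_{q}(m)=3$, your argument does not rule out finiteness. What is actually needed, and what the paper proves, is the sharper bound that every orbit is at most $2$-dimensional, so that $m-1\geq 3>2$ forces infinitely many orbits.

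The paper obtains this sharper bound by first invoking Theorem \ref{thm:DanGiz-Isom} to reduce to the two explicit models $(\mathbb{F}_{0},\Delta_{e})$ and $(\mathbb{F}_{1},\sigma_{*}^{-1}C_{e})$ of Example \ref{exa:Models}, and then computing $\mathrm{Aut}(\mathbb{F}_{n},B\cup F_{q})$ in closed form: it is $\mathbb{G}_{m}\ltimes\mathbb{G}_{a}$ when $B^{2}\in\{2,3\}$ and $\mathbb{G}_{m}$ otherwise, hence of dimension at most $2$ in all cases. (Heuristically this is forced by ampleness: $B$ moves in a linear system of dimension $B^{2}+1$ inside $\mathbb{F}_{n}$, whose automorphism group has dimension $n+5$ with $B^{2}\geq n+2$, leaving a stabilizer of dimension at most $2$; but this needs to be made precise, which is exactly what the explicit computation does.) To repair your proof you must replace the bound $3$ by $2$, either by that explicit computation or by a rigorous version of the ampleness argument; your remaining points (uniqueness and multiplicity of the degenerate fiber via Lemma \ref{lem:L_qB-members}, the fact that equivalences preserve $F_{q}$, and the finiteness of the ambiguity in the choice of $q$) are handled correctly and agree with the paper.
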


\begin{proof}
By Theorem \ref{thm:DanGiz-Isom}, it suffices to construct such families
from the two pairs $(\mathbb{F}_{n},B)=(\mathbb{F}_{0},\Delta_{e})$
and $(\mathbb{F}_{1},\sigma_{*}^{-1}C_{e})$ of Example \ref{exa:Models}.
If $B^{2}=2e\geq2$, let $q=([1:0],[0:1])\in\Delta_{e}=\{u_{1}^{e}v_{0}-u_{0}^{e}v_{1}=0\}\subset\mathbb{F}_{0}$.
The group $\mathrm{Aut}(\mathbb{F}_{0},\Delta_{e}\cup F_{q})$ is
isomorphic to the affine group $\mathbb{G}_{m}\ltimes\mathbb{G}_{a}$
acting by 
\[
(\lambda,t)\cdot([u_{0}:u_{1}],[v_{0}:v_{1}])=([\lambda u_{0}+tu_{1}:u_{1}],[v_{0}+\lambda^{-1}tv_{1}:\lambda^{-1}v_{1}])
\]
if $e=1$ and for every $e\geq2$ to the group $\mathbb{G}_{m}$ acting
by $\lambda\cdot([u_{0}:u_{1}],[v_{0}:v_{1}])=([\lambda u_{0}:u_{1}],[v_{0}:\lambda^{-e}v_{1}])$.
If $B^{2}=2e+1\geq3$, viewing $\mathbb{F}_{1}$ as the blow-up $\sigma:\mathbb{F}_{1}\to\mathbb{P}_{[x:y:z]}^{2}$
of the point $p=[0:1:0]$ with exceptional divisor $C_{0}$, let $q$
be the intersection point of $\sigma_{*}^{-1}C_{e}$ with the proper
transform of the tangent line $L=\{z=0\}$ to $C_{e}=\{yz^{e}+x^{e+1}=0\}$
at $p$. The group $\mathrm{Aut}(\mathbb{F}_{0},\sigma_{*}^{-1}C_{e}\cup F_{q})$
is then isomorphic to the group $\mathrm{Aut}(\mathbb{P}^{2},C_{e}\cup L)$.
The latter is isomorphic to $\mathbb{G}_{m}\ltimes\mathbb{G}_{a}$
acting by $(\lambda,t)\cdot[x:y:z]=[\lambda x+tz:\lambda^{2}y-2\lambda tx-t^{2}z:z]$
if $e=1$ and for every $e\geq2$ to $\mathbb{G}_{m}$ acting by $\lambda\cdot[x:y:z]=[\lambda x:\lambda^{e+1}y:z]$. 

In both cases, the $\mathrm{Aut}(\mathbb{F}_{n},B\cup F_{q})$-orbit
of a point of the open subset $\mathcal{S}_{q}(m)\subset\mathbb{P}^{m-1}$
is at most $2$-dimensional. Since $m-1\geq3$, the set-theoretic
orbit space $\mathcal{S}_{q}(m)/\mathrm{Aut}(\mathbb{F}_{n},B\cup F_{q})$
is infinite and the assertion follows from Lemma \ref{lem:Pencil-Aut-Orbit-Equivalence}. 
\end{proof}

\section{Equivalence classes of $\mathbb{A}^{1}$-fibrations of affine type }

\subsection{\label{subsec:Special-pencils}Special pencils of rational curves
and associated $\mathbb{A}^{1}$-fibrations of affine type }

Let $(X,B)$ be a pair as in Lemma \ref{lem:Pairs}. For every point
$q\in B$, denote by $\mathcal{P}_{q}$ the linear subsystem of the
complete linear system $|B|$ on $X$ consisting of curves with local
intersection number with $B$ at $q$ equal to $B^{2}$. If $(X,B)\cong(\mathbb{P}^{2},L)$
where $L$ is a line, then $\mathcal{P}_{q}$ is simply the pencil
of lines through $q$. More generally, if $B^{2}\geq2$ then the same
type of computation as in the proof of Lemma \ref{lem:L_qB-members}
implies that $\mathcal{P}_{q}$ is a pencil. The minimal resolution
$\sigma:\tilde{X}\to X$ of the rational map $\rho_{q}:X\dashrightarrow\mathbb{P}^{1}$
defined by $\mathcal{P}_{q}$ is obtained by performing $B^{2}$ successive
blow-ups with center at $q$ on the successive proper transforms of
$B$, with respective exceptional divisor $E_{1},\ldots,E_{B^{2}}$.
The total transform of $B$ in $\tilde{X}$ is a rational chain $\sigma_{*}^{-1}B\triangleleft E_{B^{2}}\triangleleft E_{B^{2}-1}\triangleleft\cdots E_{1}$
of type $[0,-1,-2,\ldots,-2]$ and the morphism $\tilde{\rho}{}_{q}=\rho_{q}\circ\sigma:\tilde{X}\to\mathbb{P}^{1}$
is the $\mathbb{P}^{1}$-fibration defined by the complete linear
system $|\sigma_{*}^{-1}B|$. 
\begin{example}
\label{exa:ResoPencilConics} For every point $q$ on a smooth conic
$Q\subset\mathbb{P}^{2}$, $\mathcal{P}_{q}$ is the pencil of conics
intersecting $Q$ with multiplicity $4$ at $q$, generated by $Q$
and twice its tangent line $T_{q}Q$ at $q$. The total transform
of $Q\cup T_{q}Q$ by $\sigma$ is the rational tree \[\xymatrix@R=0.5em @C=0.3em{(\sigma_*^{-1}Q,0) \ar@{}[r]|\triangleleft & (E_4,-1) \ar@{}[r]|\triangleleft & (E_3,-2) \ar@{}[r]|\triangleleft & (E_2,-2) \ar@{-}[d] \ar@{-}[r]& (\sigma_*^{-1}T_qQ,-1) \\ & & & (E_1,-2),}\]   where
the displayed numbers in the parenthesis are the self-intersection
numbers of the corresponding irreducible components. The $\mathbb{P}^{1}$-fibration
$\tilde{\rho}_{q}:\tilde{\mathbb{P}}^{2}\to\mathbb{P}^{1}$ has $\tilde{\rho}_{q}^{-1}(\rho_{q}(T_{q}Q))=\bigcup_{i=1}^{3}E_{i}\cup\sigma_{*}^{-1}T_{q}Q$
as a unique singular fiber.
\end{example}

For pairs $(\mathbb{F}_{n},B)$ as in Lemma \ref{lem:Pairs} b), we
have the following description (see also \cite[Section 3]{DubLam15}): 
\begin{lem}
\label{lem:Special-Pencil-Reso} Let $(\mathbb{F}_{n},B)$ be a pair
as in Lemma \ref{lem:Pairs} b). Then for every point $q\in B$, the
following hold:

a) The pencil $\mathcal{P}_{q}$ has a unique singular member consisting
of a divisor of the form $C+m_{q}F_{q}$, where $C$ is a section
of $\pi_{n}$, $F_{q}=\pi_{n}^{-1}(\pi_{n}(q))$ and $m_{q}\in\{1,\ldots,B^{2}-1\}$. 

b) The $\mathbb{P}^{1}$-fibration $\tilde{\rho}_{q}:\tilde{\mathbb{F}}_{n}\to\mathbb{P}^{1}$
has $\tilde{\rho}_{q}^{-1}(\rho_{q}(C\cup F_{q}))=\bigcup_{i=1}^{B^{2}-1}E_{i}\cup\sigma_{*}^{-1}(C\cup F_{q})$
as a unique singular fiber. 

c) For a general point $q\in B$, the unique singular member of $\mathcal{P}_{q}$
is reduced. 
\end{lem}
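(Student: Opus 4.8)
The plan is to treat assertions (a) and (b) together through the resolution $\sigma\colon\tilde{\mathbb{F}}_n\to\mathbb{F}_n$ and the $\mathbb{P}^1$-fibration $\tilde{\rho}_q=\rho_q\circ\sigma\colon\tilde{\mathbb{F}}_n\to\mathbb{P}^1$ described above, and to prove (c) by a separate cohomological genericity argument. First I would record the decomposition of the members of $\mathcal{P}_q$: since every $D\in\mathcal{P}_q$ satisfies $D\sim B$ and $D\cdot F=1$, it splits as $D=C'+V$ with $C'$ a section of $\pi_n$ and $V$ an effective vertical divisor, and the condition $D\cdot_q B=B^2=D\cdot B$ forces $D\cap B=\{q\}$, whence $V=mF_q$ for some $m\geq0$. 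A general member has $m=0$ and is a smooth section (it is a smooth fiber of $\tilde{\rho}_q$), so the singular members of $\mathcal{P}_q$ are exactly those with $m\geq1$, that is, those containing $F_q$ as a component.

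For the uniqueness in (a) the key point is that $F_q$ is \emph{not} a fixed component of $\mathcal{P}_q$, the general member being a prime section disjoint from it; two distinct members both containing $F_q$ would push $F_q$ into the base locus, which is impossible. Hence at most one member of $\mathcal{P}_q$ contains $F_q$, so there is at most one singular member. Existence follows since $\tilde{\mathbb{F}}_n$ has Picard number $B^2+2>2$, so $\tilde{\rho}_q$ is not a $\mathbb{P}^1$-bundle and must have a degenerate fiber, whose image under $\sigma$ is a singular member. Thus $\tilde{\rho}_q$ has a unique degenerate fiber $\mathcal{F}$.

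To determine the shape of $\mathcal{F}$ (assertion (b)) I would argue as follows. Its support contains $\sigma_*^{-1}F_q$, a $(-1)$-curve meeting $E_1$ since $F_q$ is blown up only at $q$, together with the chain $E_1\triangleleft\cdots\triangleleft E_{B^2-1}$ of $(-2)$-curves (here $\sigma_*^{-1}B$ is a smooth fiber and the $(-1)$-curve $E_{B^2}$ is the section). These $B^2$ curves form a chain of type $[-1,-2,\ldots,-2]$ whose intersection form is negative definite; since a fiber has self-intersection $0$, no effective combination of them alone can be a whole fiber, so $\mathcal{F}$ carries at least one further component $\Gamma$. As $\Gamma$ is vertical and not $\sigma$-exceptional it is the proper transform of a curve $C=\sigma_*\Gamma$ on $\mathbb{F}_n$, and computing $\sigma_*\mathcal{F}\cdot F=B\cdot F=1$ shows that $\Gamma$ occurs in $\mathcal{F}$ with multiplicity one and that $C$ is a prime section. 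Finally the Picard number forces $\mathcal{F}$ to have exactly $B^2+1$ components, so $\Gamma=\sigma_*^{-1}C$ is the only extra one and $\tilde{\rho}_q^{-1}(\rho_q(C\cup F_q))=\bigcup_{i=1}^{B^2-1}E_i\cup\sigma_*^{-1}(C\cup F_q)$. Pushing forward, the unique singular member is $C+m_qF_q$ with $m_q\geq1$; and since $C$ is a section one has $C\cdot B\geq1$, so $B^2=(C+m_qF_q)\cdot B=C\cdot B+m_q$ yields $m_q\leq B^2-1$.

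For (c) the aim is to control how $m_q$ varies with $q$. The restriction map $H^0(\mathbb{F}_n,\mathcal{O}(B-F))\to H^0(B,\mathcal{O}_B(B^2-1))\cong H^0(\mathbb{P}^1,\mathcal{O}(B^2-1))$ is injective with both spaces of dimension $B^2$, hence an isomorphism; so for every $q$ there is a unique $C_q\in|B-F|$ with $C_q|_B=(B^2-1)\,q$, and one checks that the singular member is reduced (equivalently $m_q=1$) exactly when $C_q$ does not contain $F_q$. Now $C_q\supseteq F_q$ holds if and only if $(B^2-2)\,q$ lies in the image of the injection $H^0(\mathbb{F}_n,\mathcal{O}(B-2F))\to H^0(\mathbb{P}^1,\mathcal{O}(B^2-2))$, whose image is a proper, in fact codimension-one, subspace. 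Since the $(B^2-2)$-th powers of linear forms span $H^0(\mathbb{P}^1,\mathcal{O}(B^2-2))$, the divisors $(B^2-2)\,q$ cannot all lie in that subspace, so $C_q\not\supseteq F_q$ for all but finitely many $q$, which proves (c). I expect this last step to be the main obstacle: the intersection-theoretic bookkeeping in (a) and (b) is routine once the two fibrations are in place, whereas (c) genuinely requires identifying the image of the second restriction map and exploiting that the osculating divisors $(B^2-2)\,q$ span, which is precisely where the hypothesis $B\cong\mathbb{P}^1$ enters decisively.
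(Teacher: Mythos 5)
Your proof is correct, and for parts (a) and (b) it is essentially the paper's argument with the bookkeeping made explicit: the paper likewise decomposes a singular member as a section plus a multiple of $F_{q}$, gets existence from the presence of a vertical $(-2)$-curve ($E_{B^{2}-1}$) in $\tilde{\mathbb{F}}_{n}$ rather than from your Picard-rank count, and gets uniqueness from the fact that only one member of the pencil can contain $F_{q}$. The genuine divergence is in (c): the paper gives no proof at all, deferring to \cite[Proposition 4.8.11]{GiDa77} and \cite[Lemma 3.2]{DubFin14}, whereas you supply a self-contained argument. Your reduction is sound: $m_{q}\geq2$ if and only if the unique $C_{q}\in|B-F|$ with $C_{q}|_{B}=(B^{2}-1)q$ contains $F_{q}$, if and only if $(B^{2}-2)q$ lies in the image of $H^{0}(\mathbb{F}_{n},\mathcal{O}(B-2F))\hookrightarrow H^{0}(B,\mathcal{O}_{B}(B^{2}-2))$; the dimension counts $h^{0}(B-F)=B^{2}$ and $h^{0}(B-2F)=B^{2}-2$ check out using $B\sim C_{0}+\frac{1}{2}(B^{2}+n)F$ together with $B^{2}\geq n+2$, and the non-degeneracy of the rational normal curve (valid in characteristic zero) shows the locus of bad $q$ is finite. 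This buys a proof of (c) independent of the cited literature, which is in the spirit of the paper's stated goal of self-containedness (cf.\ Remark \ref{rem:TwoLemmas-give-GDIso}, where Lemma \ref{lem:Special-Pencil-Reso} c) is an ingredient in re-deriving the Danilov--Gizatullin theorem). Two cosmetic points: the general member of $\mathcal{P}_{q}$ is not disjoint from $F_{q}$ --- it meets it transversally at $q$ --- and what your uniqueness argument actually needs is only that $F_{q}$ is not a fixed component, which is immediate since $B$ itself belongs to $\mathcal{P}_{q}$ and does not contain $F_{q}$; and in (b) the assertion that the degenerate fiber has exactly $B^{2}+1$ components uses that it is the \emph{only} degenerate fiber, so the uniqueness step must logically precede the component count, as it does in your write-up.
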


\begin{proof}
With the notation of subsection \ref{subsec:Models}, put $d=B^{2}\geq2$
and $B\sim C_{0}+\ell F$ with $\ell=\frac{1}{2}(d+n)\leq d-1$. Since
$d\geq2$ and $E_{d-1}^{2}=-2$, $\tilde{\rho}_{q}^{-1}(\tilde{\rho}_{q}(E_{d-1}))$
is a singular fiber of $\tilde{\rho}_{q}$ and its image by $\sigma$
is a singular member of $\mathcal{P}_{q}$. Since a singular member
of $\mathcal{P}_{q}$ decomposes as the union of a section $C\sim C_{0}+\ell'F$
of $\pi_{n}$ for some $0\leq\ell'<\ell\leq d-1$ intersecting $B$
with multiplicity $d-(\ell-\ell')$ at $q$ and of $(\ell-\ell')F_{q}$,
it follows that $\sigma(\tilde{\rho}_{q}^{-1}(\tilde{\rho}_{q}(E_{d-1})))=C+(\ell-\ell')F_{q}$
is the unique singular member of $\mathcal{P}_{q}$. This proves a)
and b). For assertion c), see \cite[Proposition 4.8.11]{GiDa77} or
\cite[Lemma 3.2]{DubFin14}. 
\end{proof}
For a pencil $\mathcal{P}_{q}$ on a pair $(X,B)$ as in Lemma \ref{lem:Pairs},
the rational map $\rho_{q}:\mathbb{F}_{n}\dashrightarrow\mathbb{P}^{1}$
defined by $\mathcal{P}_{q}$ restricts to an $\mathbb{A}^{1}$-fibration
$\pi_{q}:X\setminus B\to\mathbb{A}^{1}=\mathbb{P}^{1}\setminus\rho_{q}(B)$.
In the case where $(X,B)=(\mathbb{P}^{2},L)$ for some line $L$,
$\pi_{q}$ is a trivial $\mathbb{A}^{1}$-bundle on $\mathbb{A}^{2}=\mathbb{P}^{2}\setminus L$,
and in the case where $(X,B)=(\mathbb{P}^{2},Q)$, $\pi_{q}$ has
a unique degenerate fiber consisting of $T_{q}Q\cap(\mathbb{P}^{2}\setminus Q)\cong\mathbb{A}^{1}$
occurring with multiplicity $2$ (see Example \ref{exa:ResoPencilConics}).
For pairs $(\mathbb{F}_{n},B)$, it follows from Lemma \ref{lem:Special-Pencil-Reso}
a) that $\pi_{q}:\mathbb{F}_{n}\setminus B\to\mathbb{A}^{1}$ has
unique degenerate fiber which is reducible, consisting of the disjoint
union of $C\cap(\mathbb{F}_{n}\setminus B)\cong\mathbb{A}^{1}$ occurring
with multiplicity $1$ and of $F_{q}\cap(\mathbb{F}_{n}\setminus B)\cong\mathbb{A}^{1}$
occurring with a certain multiplicity $m_{q}\in\{1,\ldots,B^{2}-1\}$.
The following lemma shows conversely that for a pair $(X,B)$ as above,
every $\mathbb{A}^{1}$-fibration $\pi:X\setminus B\to\mathbb{A}^{1}$
is induced by a pencil $\mathcal{P}_{q}$ on a suitable smooth completion
$(X',B')$ of $X\setminus B$. 
\begin{lem}
\label{lem:A1-fib-inducedbypencil}Let $(X,B)$ be a pair as in Lemma
\ref{lem:Pairs} and let $\pi:X\setminus B\to\mathbb{A}^{1}$ be an
$\mathbb{A}^{1}$-fibration. Then there exists a smooth completion
$(X',B')$ of $X\setminus B$ by some pair as in Lemma \ref{lem:Pairs},
an isomorphism $\Psi:X\setminus B\to X'\setminus B'$ and a point
$q'\in B'$ such that $\pi=\pi_{q'}\circ\Psi$. 
\end{lem}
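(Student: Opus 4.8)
The plan is to take a general $\mathbb{A}^1$-fibration $\pi: X\setminus B \to \mathbb{A}^1$ and analyze its relatively minimal $\mathbb{P}^1$-fibered completion in order to recognize it as one of the standard pencils $\mathcal{P}_{q'}$. First I would invoke the structure recalled in \S\ref{subsec:P1-A1-Fib}(ii): the fibration $\pi$ extends to a $\mathbb{P}^1$-fibration $\bar{\rho}: \tilde{X} \to \mathbb{P}^1$ on some SNC completion $(\tilde{X}, D)$, where the boundary $D$ decomposes as the closure of the generic fiber component at infinity, a section $H$, and the boundary pieces $G_a$ over the points $a$ in the base. Because $\pi$ is of affine type with base $\mathbb{A}^1$, the boundary contains a full fiber $F_\infty = \bar{\rho}^{-1}(\infty)$ over the point at infinity, and the divisor $B$ (the image of the original boundary) sits inside $D$. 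The key structural point I would extract is that $\tilde{X}$ is obtained from the minimal resolution of $\pi$ by a sequence of blow-ups, and that contracting the $\bar{\rho}$-superfluous components of $D$ should bring us back down to a minimal model which, by Lemma~\ref{lem:Pairs}, must be either $(\mathbb{P}^2,L)$, $(\mathbb{P}^2,Q)$, or some $(\mathbb{F}_n, B')$.

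The heart of the argument is to run the birational analysis in reverse. Starting from $(\tilde{X}, \bar{\rho})$, I would contract $(-1)$-curves inside the boundary divisor that are not needed for the SNC structure, i.e.\ perform the inverse of the blow-up sequence, descending to a relatively minimal $\mathbb{P}^1$-bundle $\bar{X}_0 \to \mathbb{P}^1$. The curve $B$, or rather its strict transform through these contractions, is traced down to a curve $B'$ on a surface $X'$ which is again one of the pairs of Lemma~\ref{lem:Pairs}; since $X'\setminus B' \cong X\setminus B$ is affine with irreducible rational boundary, this is forced. The crucial claim is that the chain structure of $\sigma^{-1}(B)$ described in \S\ref{subsec:Special-pencils} (type $[0,-1,-2,\ldots,-2]$) is exactly what one reconstructs: the section component of self-intersection $0$ becomes the strict transform $\sigma_*^{-1}B'$ defining the fibration by the linear system $|\sigma_*^{-1}B'|$, while the $(-1)$- and $(-2)$-curves are the exceptional divisors $E_1,\ldots,E_{(B')^2}$ of successive blow-ups concentrated at a single point $q'\in B'$. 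Matching the boundary chain of $\pi$'s completion against this canonical shape identifies the point $q'$ and shows that $\pi$ factors as $\pi_{q'}\circ\Psi$ for the induced isomorphism $\Psi$.

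The main obstacle I anticipate is controlling the boundary combinatorics so as to guarantee that the descent really terminates at a pencil of the special form $\mathcal{P}_{q'}$ rather than at some a priori different linear system. Concretely, I must verify that all base points of the pencil defining $\pi$ are infinitely near a single point $q'$ of $B'$, i.e.\ that the fibration has the ``one-place-at-infinity'' concentration property rather than base points spread over several fibers. This should follow from the fact that $\pi$ is an $\mathbb{A}^1$-fibration of affine type with $B'$ meeting the generic fiber in a single point (since $B'$ is a section of $\pi_n$, or behaves like a line/conic), forcing the completed fibration to have its full ramification of $B'$ against the fibers localized at one point; the self-intersection bookkeeping $B'\cdot(\text{general fiber}) = 1$ together with $(B')^2$ being the total number of blow-ups then pins everything to $q'$. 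Once this concentration is established, the identification of the pencil with $\mathcal{P}_{q'}$ is immediate from its defining property (local intersection number $B^2$ with $B$ at $q'$), and the isomorphism $\Psi$ together with $q'$ furnish exactly the data asserted in the statement.
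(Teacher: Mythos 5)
Your overall strategy (complete $\pi$ to a relatively minimal $\mathbb{P}^1$-fibration, then contract boundary components to land on a pair from Lemma \ref{lem:Pairs} with a distinguished point $q'$) is the same as the paper's, and you correctly locate the difficulty: one must show that the boundary of the completion has the very special shape coming from the resolution of a pencil $\mathcal{P}_{q'}$, i.e.\ that everything is ``concentrated'' over a single point. But your proposed resolution of that difficulty does not work, and this is a genuine gap. The general structure recalled in \S\ref{subsec:P1-A1-Fib}(ii) only says the boundary is a tree $F_{\infty}\cup H\cup\bigcup_{a}G_{a}$: a priori several $G_{a}$ over distinct points $a\in\mathbb{A}^{1}$ can be nonempty, each $G_{a}$ can be a branched tree, and even if the boundary happens to be a chain $F_{\infty}\triangleleft H\triangleleft E$ the components of $E$ could have arbitrary self-intersections $\leq-2$. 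The bookkeeping $B'\cdot(\text{general fiber})=1$ and ``$(B')^{2}$ equals the number of blow-ups'' rules out none of this; in particular, if $E$ had type, say, $[-3,-2,\ldots]$, the subchain $H\triangleleft E$ would not contract to a smooth point and your descent would terminate on a surface with reducible boundary, not on a pair $(X',B')$ as in Lemma \ref{lem:Pairs}.

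The paper closes this gap with two external inputs for which you offer no substitute: first, \cite[Proposition 2.15]{Dub05}, which guarantees that the boundary of a relatively minimal $\mathbb{P}^1$-fibered completion of such a surface is a rational \emph{chain} $F_{\infty}\triangleleft H\triangleleft E$ with $E$ contained in a single fiber; second, the Danilov--Gizatullin invariance result \cite[Corollary 2]{GiDa75}, which says the number of components of $E$ and their self-intersections do not depend on the chosen completion. Combined with the observation that $X\setminus B$ \emph{does} admit one completion by a chain of type $[0,-1,-2,\ldots,-2]$ (namely the one obtained by resolving some pencil $\mathcal{P}_{q}$, as in \S\ref{subsec:Special-pencils}), this forces $E$ to consist of exactly $B^{2}-1$ curves of self-intersection $-2$, after which the contraction of $H\triangleleft E$ to a smooth point $q'\in B'=\tau(F_{\infty})$ is legitimate and identifies $\pi$ with $\pi_{q'}\circ\Psi$. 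A secondary confusion in your write-up: it is the fiber at infinity $F_{\infty}$ of the \emph{new} completion that becomes $B'$, not a strict transform of the original $B$; the original pair $(X,B)$ plays no role once you pass to the completion adapted to $\pi$, and the two completions are related only through the identity on the open surface $X\setminus B$.
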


\begin{proof}
Note that $X\setminus B$ admits an SNC completion by a rational chain
of type $[0,-1]$ if $(X,B)\cong(\mathbb{P}^{2},L)$ and of type $[0,-1,-2,\ldots,-2]$
with $B^{2}-1\geq1$ curves of self-intersection number $-2$ otherwise.
Now let $(Y,D)$ be a relatively minimal SNC completion of $\pi:X\setminus B\to\mathbb{A}^{1}$
into a $\mathbb{P}^{1}$-fibration $\bar{\pi}:Y\to\mathbb{P}^{1}$
as in subsection \ref{subsec:P1-A1-Fib} (ii). By \cite[Proposition 2.15]{Dub05},
$D$ is a rational chain $F_{\infty}\triangleleft H\triangleleft E$,
where $F_{\infty}\cong\mathbb{P}^{1}$ is the fiber of $\bar{\pi}$
over the point $\mathbb{P}^{1}\setminus\mathbb{A}^{1}$, $H$ is a
section of $\bar{\pi}$ and $E$ is either the empty divisor or a
rational chain $E_{1}\triangleleft\cdots\triangleleft E_{d-1}$ consisting
of curves with self-intersection number $\leq-2$ contained in a fiber
of $\bar{\pi}$. By making elementary transformations consisting of
the blow-up of a point of $F_{\infty}$ followed by the contraction
of the proper transform of $F_{\infty}$, we can further assume from
the beginning that $H^{2}=-1$. By \cite[Corollary 2]{GiDa75} (see
also \cite[Corollary 3.32]{FKZ07} or \cite[Corollary 3.2.3]{BlanDub11}),
the number of irreducible components of $E$ and their self-intersection
numbers are independent on $(Y,D)$. Thus, $D$ is a chain of one
of the types listed above and so, letting $\tau:Y\to X'$ be the contraction
of the subchain $H\triangleleft E$ onto a smooth point $q'\in B'=\tau(F_{\infty})\cong\mathbb{P}^{1}$,
we obtain a smooth completion $(X',B')$ of $X\setminus B$ and an
isomorphism $\Psi=\tau|_{Y\setminus D}:X\setminus B\cong Y\setminus D\to X'\setminus B'$
such that $\pi=\pi_{q'}\circ\Psi$. 
\end{proof}
\begin{cor}
\label{cor:UniquenessA1Fib}The affine plane $\mathbb{A}^{2}$, the
complement $\mathbb{P}^{2}\setminus Q$ of a smooth conic $Q\subset\mathbb{P}^{2}$
and the affine quadric surface $\mathbb{P}^{1}\times\mathbb{P}^{1}\setminus\Delta$
all have a unique equivalence class of $\mathbb{A}^{1}$-fibrations
over $\mathbb{A}^{1}$. 
\end{cor}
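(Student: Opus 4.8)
The plan is to combine Lemma~\ref{lem:A1-fib-inducedbypencil}, which reduces every $\mathbb{A}^{1}$-fibration over $\mathbb{A}^{1}$ on one of these surfaces to one induced by a special pencil $\mathcal{P}_{q'}$, with the observation that the whole family $\pi_{q}$, $q\in B$, forms a single equivalence class because $\mathrm{Aut}(X,B)$ acts transitively on $B\cong\mathbb{P}^{1}$. Since the existence of at least one $\mathbb{A}^{1}$-fibration over $\mathbb{A}^{1}$ is clear, the content of the statement is uniqueness, that is, that any two such fibrations are equivalent.

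First I would set up the reduction. Write $V$ for one of $\mathbb{A}^{2}=\mathbb{P}^{2}\setminus L$, $\mathbb{P}^{2}\setminus Q$, or $\mathbb{P}^{1}\times\mathbb{P}^{1}\setminus\Delta=\mathbb{F}_{0}\setminus\Delta$, and let $(X,B)$ be the corresponding pair of Lemma~\ref{lem:Pairs}. Given an arbitrary $\mathbb{A}^{1}$-fibration $\pi:V\to\mathbb{A}^{1}$, Lemma~\ref{lem:A1-fib-inducedbypencil} produces a smooth completion $(X',B')$ by a pair of Lemma~\ref{lem:Pairs}, an isomorphism $\Psi:V\to X'\setminus B'$ and a point $q'\in B'$ with $\pi=\pi_{q'}\circ\Psi$. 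I would then argue that $(X',B')$ is isomorphic to $(X,B)$ as a pair: the three surfaces are distinguished among the complements of Lemma~\ref{lem:Pairs} by their divisor class groups ($0$, $\mathbb{Z}/2\mathbb{Z}$ and $\mathbb{Z}$ respectively), and within the relevant case the pair is unique, by Example~\ref{exa:Models} for $(\mathbb{P}^{2},L)$ and $(\mathbb{P}^{2},Q)$, and, for the quadric, because $B^{2}=\Delta^{2}=2$ forces $\mathbb{F}_{0}$ with a single nontrivial torsor by Remark~\ref{rem:Torsor} since then $\mathbb{P}^{d-2}=\mathbb{P}^{0}$ is a point. Transporting $q'$ through the isomorphism $(X',B')\cong(X,B)$, and using that the pencil $\mathcal{P}_{q}$ depends canonically on $q$ and $B$, this shows that $\pi$ is equivalent to $\pi_{q}$ for some $q\in B$.

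It then remains to prove that $\pi_{q}$ and $\pi_{q'}$ are equivalent for all $q,q'\in B$. Because $\mathcal{P}_{q}$ is intrinsically defined by $q$ and $B$, any $\phi\in\mathrm{Aut}(X,B)$ with $\phi(q)=q'$ preserves $|B|$ and local intersection multiplicities, hence maps $\mathcal{P}_{q}$ onto $\mathcal{P}_{q'}$, so that its restriction $\phi|_{V}$ intertwines $\pi_{q}$ and $\pi_{q'}$; it therefore suffices to check that $\mathrm{Aut}(X,B)$ acts transitively on $B$. This I would verify case by case: for $(\mathbb{P}^{2},L)$ the group is the affine group, whose action on the line at infinity is through its linear part $\mathrm{PGL}_{2}$ and hence transitive; for $(\mathbb{P}^{2},Q)$ one has $\mathrm{Aut}(\mathbb{P}^{2},Q)\cong\mathrm{PGL}_{2}$ acting transitively on $Q\cong\mathbb{P}^{1}$ via the Veronese identification; and for $(\mathbb{P}^{1}\times\mathbb{P}^{1},\Delta)$ the diagonal copy of $\mathrm{PGL}_{2}$, acting by $g\cdot(x,y)=(gx,gy)$, preserves $\Delta$ and is transitive on it.

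The routine steps are the transitivity verifications, each of which reduces to the well-known transitivity of $\mathrm{PGL}_{2}$ on $\mathbb{P}^{1}$, together with the explicit shape of the pencils already recorded in Example~\ref{exa:ResoPencilConics} and Lemma~\ref{lem:Special-Pencil-Reso}. The step requiring the most care, and the main obstacle, is the uniqueness of the completion used in the reduction: one must ensure that the a priori auxiliary completion $(X',B')$ furnished by Lemma~\ref{lem:A1-fib-inducedbypencil} cannot be a genuinely different pair of Lemma~\ref{lem:Pairs}, which is precisely where the class-group distinction and, for the quadric, the constraint $B^{2}=2$ combined with Remark~\ref{rem:Torsor} are needed.
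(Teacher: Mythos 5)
Your proposal is correct and follows essentially the same route as the paper: reduce via Lemma \ref{lem:A1-fib-inducedbypencil} to a fibration $\pi_{q'}$ induced by a pencil $\mathcal{P}_{q'}$ on a completion $(X',B')$, identify $(X',B')$ with $(X,B)$ case by case, and conclude by transitivity of the automorphism group on the pairs $(B',q')$. Your splitting of that last step into uniqueness of the pair plus transitivity of $\mathrm{Aut}(X,B)$ on $B$ is just a finer decomposition of the paper's single appeal to transitivity on pairs, and your supporting verifications (class groups, the $d=2$ torsor, the three $\mathrm{PGL}_2$ actions) are all sound.
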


\begin{proof}
In each case, given an $\mathbb{A}^{1}$-fibration $\pi:X\setminus B\to\mathbb{A}^{1}$,
Lemma \ref{lem:A1-fib-inducedbypencil} provides a smooth completion
$(X',B')$ of $X\setminus B$ such that $\pi=\pi_{q'}\circ\Psi$ for
some isomorphism $\Psi:X\setminus B\to X'\setminus B'$ and some point
$q'\in B'$. If $(X,B)=(\mathbb{P}^{2},L)$ or $(\mathbb{P}^{2},Q)$
then $X'=\mathbb{P}^{2}$ and $B'$ is respectively a line $L'$ or
a smooth conic $Q'$. If $(X,B)=(\mathbb{P}^{1}\times\mathbb{P}^{1},\Delta)$
then $X'=\mathbb{P}^{1}\times\mathbb{P}^{1}$ and $B'$ is a prime
divisor of type $(1,1)$. The assertion then follows from the fact
that in each case, the automophism group of the surface $X'=X$ acts
transitively on the set of pairs $(B',q')$. 
\end{proof}
\begin{cor}
\label{cor:GizDanSurface-A1FiberType} Every $\mathbb{A}^{1}$-fibration
$\pi:\mathbb{F}_{n}\setminus B\to\mathbb{A}^{1}$ on an affine surface
$\mathbb{F}_{n}\setminus B$ has a unique degenerate fiber which consists
of the disjoint union of a reduced irreducible component and an irreducible
component of multiplicity $m\in\{1,\ldots,B^{2}-1\}$. 
\end{cor}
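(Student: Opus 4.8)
The plan is to reduce the statement about an arbitrary $\mathbb{A}^{1}$-fibration $\pi:\mathbb{F}_{n}\setminus B\to\mathbb{A}^{1}$ to the already-established description of the special pencils $\mathcal{P}_{q}$. First I would invoke Lemma \ref{lem:A1-fib-inducedbypencil}: given $\pi$, it produces a smooth completion $(X',B')$ of $\mathbb{F}_{n}\setminus B$ by a pair as in Lemma \ref{lem:Pairs}, an isomorphism $\Psi:\mathbb{F}_{n}\setminus B\to X'\setminus B'$, and a point $q'\in B'$ with $\pi=\pi_{q'}\circ\Psi$. Since $\mathbb{F}_{n}\setminus B$ is neither $\mathbb{A}^{2}$ nor the complement of a smooth conic (these have torsion or otherwise distinct divisor class groups, and $\pi$ has a degenerate fiber, whereas $\mathbb{A}^{2}$ carries only trivial $\mathbb{A}^{1}$-bundles under the special pencil), the completion $(X',B')$ must again be of the form $(\mathbb{F}_{n'},B')$ as in Lemma \ref{lem:Pairs} b), with ${B'}^{2}=B^{2}$ by the invariance recorded through Theorem \ref{thm:DanGiz-Isom} and the constancy of $K_{\mathbb{F}_{n}\setminus B}$.

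Once $\pi$ is identified up to equivalence with a special pencil fibration $\pi_{q'}:\mathbb{F}_{n'}\setminus B'\to\mathbb{A}^{1}$, I would simply read off its degenerate fiber from Lemma \ref{lem:Special-Pencil-Reso}. Part a) of that lemma states that $\mathcal{P}_{q'}$ has a unique singular member of the form $C+m_{q'}F_{q'}$ with $C$ a section of $\pi_{n'}$ and $m_{q'}\in\{1,\ldots,{B'}^{2}-1\}$. Restricting to $\mathbb{F}_{n'}\setminus B'$, the curves $C$ and $F_{q'}$ each meet $B'$ and so restrict to copies of $\mathbb{A}^{1}$; since $C$ and $F_{q'}$ intersect only on $B'$ (the section meets the fiber in one point, which lies on $B'$ because $C$ passes through $q'\in B'$), their restrictions are disjoint. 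Thus the unique degenerate fiber of $\pi_{q'}$ is the disjoint union of the reduced component $C\cap(\mathbb{F}_{n'}\setminus B')$ and the component $F_{q'}\cap(\mathbb{F}_{n'}\setminus B')$ of multiplicity $m_{q'}$, with $m_{q'}$ in the stated range. Transporting back through $\Psi$, which is an isomorphism of $\mathbb{A}^{1}$-fibered surfaces, preserves the fiber structure, multiplicities, and the range of $m_{q'}$ (as ${B'}^{2}=B^{2}$), giving the claim.

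The main obstacle I anticipate is justifying the claim that the completion $(X',B')$ furnished by Lemma \ref{lem:A1-fib-inducedbypencil} is genuinely of type b) rather than type a), and that ${B'}^{2}=B^{2}$. Lemma \ref{lem:A1-fib-inducedbypencil} by itself only guarantees \emph{some} pair as in Lemma \ref{lem:Pairs}; ruling out $(\mathbb{P}^{2},L)$ and $(\mathbb{P}^{2},Q)$ requires noting that $\mathbb{F}_{n}\setminus B$ admits an SNC completion by a chain of type $[0,-1,-2,\ldots,-2]$ with exactly $B^{2}-1$ curves of self-intersection $-2$, a boundary chain that distinguishes it from $\mathbb{A}^{2}$ (type $[0,-1]$) and from $\mathbb{P}^{2}\setminus Q$. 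This discrete boundary-chain invariant, which the proof of Lemma \ref{lem:A1-fib-inducedbypencil} already controls via \cite[Corollary 2]{GiDa75}, simultaneously pins down the type and forces ${B'}^{2}=B^{2}$, so the obstacle is really one of assembling invariants already in hand rather than proving anything new.

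\begin{proof}
By Lemma \ref{lem:A1-fib-inducedbypencil}, there exist a smooth completion $(X',B')$ of $\mathbb{F}_{n}\setminus B$ by a pair as in Lemma \ref{lem:Pairs}, an isomorphism $\Psi:\mathbb{F}_{n}\setminus B\to X'\setminus B'$ and a point $q'\in B'$ such that $\pi=\pi_{q'}\circ\Psi$. The surface $\mathbb{F}_{n}\setminus B$ admits an SNC completion by a rational chain of type $[0,-1,-2,\ldots,-2]$ with $B^{2}-1\geq1$ curves of self-intersection $-2$, which distinguishes it from $\mathbb{A}^{2}$ and from the complement of a smooth conic. Hence $(X',B')$ is of the form $(\mathbb{F}_{n'},B')$ as in Lemma \ref{lem:Pairs} b), and the number and self-intersections of the $(-2)$-components of the boundary chain force ${B'}^{2}=B^{2}$. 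By Lemma \ref{lem:Special-Pencil-Reso} a), the pencil $\mathcal{P}_{q'}$ has a unique singular member $C+m_{q'}F_{q'}$ with $C$ a section of $\pi_{n'}$ and $m_{q'}\in\{1,\ldots,{B'}^{2}-1\}=\{1,\ldots,B^{2}-1\}$. The curves $C$ and $F_{q'}$ restrict to copies of $\mathbb{A}^{1}$ in $\mathbb{F}_{n'}\setminus B'$ and meet only along $B'$, so the unique degenerate fiber of $\pi_{q'}$ is the disjoint union of the reduced component $C\cap(\mathbb{F}_{n'}\setminus B')$ and the component $F_{q'}\cap(\mathbb{F}_{n'}\setminus B')$ of multiplicity $m_{q'}$. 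Transporting through the isomorphism $\Psi$ of $\mathbb{A}^{1}$-fibered surfaces yields the assertion for $\pi$.
\end{proof}
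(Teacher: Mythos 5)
Your proof is correct and follows essentially the same route the paper intends: the corollary is stated without a separate proof precisely because it is the immediate combination of Lemma \ref{lem:A1-fib-inducedbypencil} with the description of the degenerate fiber of $\pi_{q}$ given by Lemma \ref{lem:Special-Pencil-Reso} a) in the paragraph preceding Lemma \ref{lem:A1-fib-inducedbypencil}. Your extra care in ruling out completions of type a) and in checking ${B'}^{2}=B^{2}$ via the invariance of the boundary chain type is exactly the justification implicit in the paper's proof of Lemma \ref{lem:A1-fib-inducedbypencil}.
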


In contrast to Corollary \ref{cor:UniquenessA1Fib}, the following
lemma, whose proof reproduces that of \cite[Theorem 16.2.1]{DubThesis}
(in french), implies that every affine surface $\mathbb{F}_{n}\setminus B$
with $B^{2}\geq3$ admits more than one equivalence class of $\mathbb{A}^{1}$-fibrations
over $\mathbb{A}^{1}$. 
\begin{lem}
\label{lem:A1Fib-Mult-Existence}Let $(\mathbb{F}_{n},B)$ be a pair
as in Lemma \ref{lem:Pairs} b). Then for every integer $m\in\{1,\ldots,B^{2}-1\}$,
there exists an $\mathbb{A}^{1}$-fibration $\pi_{m}:\mathbb{F}_{n}\setminus B\to\mathbb{A}^{1}$
whose degenerate fiber has a reduced component and a component of
multiplicity $m$.\footnote{Iin particular, there exist at least $B^{2}-1$ equivalence classes
of $\mathbb{A}^{1}$-fibrations over $\mathbb{A}^{1}$ on $\mathbb{F}_{n}\setminus B$.
The lower bound $\left\lfloor \tfrac{B^{2}-1}{2}\right\rfloor $ was
discovered earlier by Peter Russell (unpublished) and was proven by
Flenner-Kaliman-Zaidenberg \cite[Corollary 5.16 a)]{FKZ07-2} using
a closely related construction.} 
\end{lem}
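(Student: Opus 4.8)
The plan is to realize each prescribed multiplicity $m\in\{1,\dots,d-1\}$, where $d=B^{2}$, as the multiplicity $m_{q'}$ appearing in the unique singular member $C'+m_{q'}F_{q'}$ of a special pencil $\mathcal{P}_{q'}$ on a \emph{suitably chosen model} of the surface, and then to transport the associated $\mathbb{A}^{1}$-fibration back to the given $\mathbb{F}_{n}\setminus B$ by means of the Danilov--Gizatullin isomorphism theorem. Indeed, by Lemma \ref{lem:Special-Pencil-Reso} and the description recorded in Corollary \ref{cor:GizDanSurface-A1FiberType}, the fibration $\pi_{q'}\colon\mathbb{F}_{n'}\setminus B'\to\mathbb{A}^{1}$ induced by $\mathcal{P}_{q'}$ has as its unique degenerate fiber the disjoint union of the reduced curve $C'\cap(\mathbb{F}_{n'}\setminus B')\cong\mathbb{A}^{1}$ and of $F_{q'}\cap(\mathbb{F}_{n'}\setminus B')\cong\mathbb{A}^{1}$ counted with multiplicity $m_{q'}$. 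Since the multiplicities of a degenerate fiber are preserved under composition with an isomorphism of surfaces, it suffices to exhibit, for each $m$, \emph{some} pair $(\mathbb{F}_{n'},B')$ with $(B')^{2}=d$ and a point $q'\in B'$ with $m_{q'}=m$: the desired $\pi_{m}$ is then $\pi_{q'}$ precomposed with an isomorphism $\mathbb{F}_{n}\setminus B\to\mathbb{F}_{n'}\setminus B'$ provided by Theorem \ref{thm:DanGiz-Isom}.

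To build the model I would set $n'=|d-2m|$, a nonnegative integer of the same parity as $d$ with $n'\le d-2$, so that a pair $(\mathbb{F}_{n'},B')$ with $B'\sim C_{0}+\ell F$ and $\ell=\tfrac12(d+n')=\max(m,d-m)$ is admissible in the sense of Lemma \ref{lem:Pairs} b). Writing $\ell'=\ell-m$, the relevant auxiliary section $C'$ is the negative section $C_{0}$ itself when $m\ge d/2$ (so $\ell'=0$), and a positive section $C'\sim C_{0}+n'F$ disjoint from $C_{0}$ when $m<d/2$ (so $\ell'=n'$); in both cases $C'\cdot B'=d-m$ and $B'-C'\sim mF$. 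The crux of the construction is to arrange that $C'$ meets $B'$ in a \emph{single} point $q'$ with local intersection multiplicity $d-m$. Granting this, $C'+mF_{q'}$ is a member of $|B'|$ meeting $B'$ at $q'$ with multiplicity $(d-m)+m\cdot 1=d$, hence lies in $\mathcal{P}_{q'}$; by the analysis of the singular members carried out in the proof of Lemma \ref{lem:Special-Pencil-Reso}, it is \emph{the} unique singular member and $m_{q'}=\ell-\ell'=m$. Letting $m$ run through $\{1,\dots,d-1\}$, the models $\mathbb{F}_{|d-2m|}$ exhaust all admissible Hirzebruch surfaces; this is forced, since a single $\mathbb{F}_{n}$ only yields the multiplicities $\{1,\dots,\tfrac12(d-n)\}\cup\{\tfrac12(d+n)\}$, so that varying the model is unavoidable.

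The one substantive point is therefore the existence of a smooth ample section $B'\cong\mathbb{P}^{1}$ on $\mathbb{F}_{n'}$ totally tangent to the fixed auxiliary section $C'$ at one point, that is, with $B'|_{C'}=(d-m)\,q'$ for some $q'\in C'\cong\mathbb{P}^{1}$. I would extract this from the restriction sequence $0\to\mathcal{O}_{\mathbb{F}_{n'}}(B'-C')\to\mathcal{O}_{\mathbb{F}_{n'}}(B')\to\mathcal{O}_{C'}(d-m)\to0$: since $B'-C'\sim mF$ and $H^{1}(\mathbb{F}_{n'},\mathcal{O}_{\mathbb{F}_{n'}}(mF))=0$ (the same vanishing used in Lemma \ref{lem:Degenerate-Fiber-subscheme}), a direct dimension count shows that the restriction map $H^{0}(\mathbb{F}_{n'},\mathcal{O}_{\mathbb{F}_{n'}}(B'))\to H^{0}(C',\mathcal{O}_{C'}(d-m))$ is surjective, so the linear subsystem $\Lambda\subset|B'|$ of divisors restricting to $(d-m)\,q'$ on $C'$ is nonempty of projective dimension $m+1$. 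A general member of $\Lambda$ is smooth by a Bertini-type argument, its base points lying on $C'$, and being a smooth member of the ample class $|C_{0}+\ell F|$ it is automatically an irreducible rational ample section. This is where the bulk of the (routine) verification concentrates, and it is the technical obstacle one must clear; once the tangent section is produced, the fibration $\pi_{q'}$ is assembled as above and transported by Theorem \ref{thm:DanGiz-Isom}, yielding for every $m\in\{1,\dots,d-1\}$ an $\mathbb{A}^{1}$-fibration $\pi_{m}\colon\mathbb{F}_{n}\setminus B\to\mathbb{A}^{1}$ whose degenerate fiber has a reduced component and a component of multiplicity $m$.
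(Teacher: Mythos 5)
Your proposal is correct in substance and shares the paper's overall strategy: for each $m$, exhibit a model $(\mathbb{F}_{n'},B')$ with $(B')^{2}=d$ and a point $q'\in B'$ whose pencil $\mathcal{P}_{q'}$ has singular member $C'+mF_{q'}$, then transport $\pi_{q'}$ back to $\mathbb{F}_{n}\setminus B$ via Theorem \ref{thm:DanGiz-Isom}. Where you genuinely diverge is in the construction of the totally tangent section. The paper fixes a prime member $C_{n'}$ of $|C_{0}+n'F|$ and points $q_{0}\in C_{0}$, $q_{n'}\in C_{n'}$ in distinct fibers, and performs $i$ elementary transformations at $q_{0}$ and $d-i$ at $q_{n'}$ to get a birational map $\beta:\mathbb{F}_{n'}\dashrightarrow\mathbb{P}^{1}\times\mathbb{P}^{1}$ over $\mathbb{P}^{1}$; the pencil $(\mathrm{pr}_{2}\circ\beta)^{*}|\mathcal{O}_{\mathbb{P}^{1}}(1)|$ then has as general member a section $B'$ meeting $C_{0}$ only at $q_{0}$ and $C_{n'}$ only at $q_{n'}$, so one model yields both multiplicities $i$ and $d-i$, and irreducibility of $B'$ is free since it is the proper transform of a fiber of $\mathrm{pr}_{2}$. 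You instead fix $C'$ and produce $B'$ by the dimension count on $\Lambda$, in the spirit of Lemma \ref{lem:Degenerate-Fiber-subscheme}; your bookkeeping ($n'=|d-2m|$, $\ell=\max(m,d-m)$, $C'\cdot B'=d-m$, $C'+mF_{q'}\in\mathcal{P}_{q'}$) is all correct. The one step you should not leave to ``Bertini'' alone is smoothness of the general member of $\Lambda$ \emph{at} $q'$: every member has contact of order at least $d-m$ with $C'$ there, so the classical theorem only gives smoothness away from $q'$. This is easily repaired, since $\Lambda$ contains $C'+mF_{q''}$ for any fiber $F_{q''}\neq F_{q'}$, which is smooth at $q'$, so being singular at $q'$ is a proper closed condition; a smooth member of the ample class $|C_{0}+\ell F|$ is then connected, hence an irreducible section, and your argument closes.
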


\begin{proof}
We use the notation introduced in subsection \ref{subsec:Models}
and put $d=B^{2}\geq2$. Let $n'=d-2i$ for some $i=1,\ldots\left\lfloor \tfrac{d}{2}\right\rfloor $,
let $C_{n'}$ be a prime member of the complete linear system $|C_{0}+n'F|$
and let $q_{0}\in C_{0}$ and $q_{n'}\in C_{n'}$ be a pair of closed
points contained in two different fibers $F_{q_{0}}$ and $F_{q_{n'}}$
of $\pi_{n'}:\mathbb{F}_{n'}\to\mathbb{P}^{1}$. Note that $B'\cdot C_{0}=i$
and $B'\cdot C_{n'}=d-i$ for every member $B'$ of $|C_{0}+(d-i)F|$.
Applying a sequence of $i$ elementary transformations with center
at $q_{0}$ followed by a sequence of $d-i$ elementary transformations
with center at $q_{n'}$ yields a birational map $\beta:\mathbb{F}_{n'}\dashrightarrow\mathbb{F}_{0}=\mathbb{P}^{1}\times\mathbb{P}^{1}$
such that $\pi_{n'}=\mathrm{pr}_{1}\circ\beta$. The composition $\mathrm{pr}_{2}\circ\beta:\mathbb{F}_{n'}\dashrightarrow\mathbb{P}^{1}$
is given by a pencil $\mathcal{L}\subset|C_{0}+(d-i)F|$ whose general
members are sections $B'$ of $\pi_{n'}$ which satisfy $B'\cap C_{0}=q_{0}$
and $B'\cap C_{n'}=q_{n'}$. Since $(B')^{2}=2(d-i)-(d-2i)=d,$ $\mathbb{F}_{n'}\setminus B'$
is isomorphic to $\mathbb{F}_{n}\setminus B$ by Theorem \ref{thm:DanGiz-Isom}.
On other hand, the pencil $\mathcal{P}_{q_{0}}$ has a unique singular
member equal to $C_{0}+(d-i)F_{q_{0}}$ whereas the pencil $\mathcal{P}_{q_{n'}}$
has a unique singular member equal to $C_{n'}+iF_{q_{n'}}$. The degenerate
fibers of the associated $\mathbb{A}^{1}$-fibrations $\pi_{q_{0}}:\mathbb{F}_{n'}\setminus B'\to\mathbb{A}^{1}$
and $\pi_{q_{n'}}:\mathbb{F}_{n'}\setminus B'\to\mathbb{A}^{1}$ have
$F_{q_{0}}\cap(\mathbb{F}_{n'}\setminus B')$ and $F_{q_{n'}}\cap(\mathbb{F}_{n'}\setminus B')$
as irreducible components of multiplicity $d-i$ and $i$ respectively.
Since $i$ ranges from $1$ to $\left\lfloor \tfrac{d}{2}\right\rfloor $,
the assertion follows. 
\end{proof}

\subsection{Some classes of $\mathbb{A}^{1}$-fibrations of affine type on surfaces
$\mathbb{F}_{n}\setminus B$ }

By Corollary \ref{cor:GizDanSurface-A1FiberType} and Lemma \ref{lem:A1Fib-Mult-Existence},
the classification of equivalences classes of $\mathbb{A}^{1}$-fibrations
$\pi:\mathbb{F}_{n}\setminus B\to\mathbb{A}^{1}$ is divided into
that of each type according to the multiplicity $m\in\{1,\ldots,B^{2}-1\}$
of the possibly non-reduced irreducible component of their unique
degenerate fiber. Hereafter, we first recall known results on the
two extremal cases: $\mathbb{A}^{1}$-fibrations with a component
of maximal multiplicity $B^{2}-1$ on the one hand, and smooth $\mathbb{A}^{1}$-fibrations
on the other hand. We then proceed to the study of equivalence classes
of $\mathbb{A}^{1}$-fibrations with a component of multiplicity two
in their unique degenerate fiber. 

\subsubsection{Equivalence classes of $\mathbb{A}^{1}$-fibrations with maximal
multiplicity}
\begin{prop}
\label{lem:OrbitMaximalPair} \label{cor:UniqueMaximal}For every
pair $(\mathbb{F}_{n},B)$ as in Lemma \ref{lem:Pairs} b), the affine
surface $\mathbb{F}_{n}\setminus B$ has a unique equivalence class
of $\mathbb{A}^{1}$-fibration $\pi:\mathbb{F}_{n}\setminus B\to\mathbb{A}^{1}$
with a degenerate fiber containing an irreducible component of multiplicity
$B^{2}-1$. 
\end{prop}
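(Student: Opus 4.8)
The plan is to reduce the statement to a transitivity property of an automorphism group acting on a suitable space of pencils, exactly in the spirit of Lemma~\ref{lem:Pencil-Aut-Orbit-Equivalence} and Corollary~\ref{cor:UniquenessA1Fib}, but now for the affine-type fibrations governed by the special pencils $\mathcal{P}_{q}$ of subsection~\ref{subsec:Special-pencils}. By Corollary~\ref{cor:GizDanSurface-A1FiberType} and Lemma~\ref{lem:A1-fib-inducedbypencil}, any $\mathbb{A}^{1}$-fibration $\pi:\mathbb{F}_{n}\setminus B\to\mathbb{A}^{1}$ is equivalent to one of the form $\pi_{q'}$ associated to a pencil $\mathcal{P}_{q'}$ on a smooth completion $(X',B')$ as in Lemma~\ref{lem:Pairs}~b), and by Theorem~\ref{thm:DanGiz-Isom} the isomorphism type of $\mathbb{F}_{n}\setminus B$ depends only on $d=B^{2}$. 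The condition that the degenerate fiber contain a component of multiplicity $B^{2}-1$ forces, via Lemma~\ref{lem:Special-Pencil-Reso}~a), that the unique singular member of $\mathcal{P}_{q'}$ be $C+(d-1)F_{q'}$ with $m_{q'}=d-1$ maximal; since $m_{q'}=\ell-\ell'$ with $\ell'\geq 0$ and $\ell=\tfrac{1}{2}(d+n)\leq d-1$, this pins down $\ell'=0$ and $\ell=d-1$, i.e.\ it forces $n=d-2$, the section $C$ to be the $(-n)$-section $C_{0}$, and the completion to be $(\mathbb{F}_{d-2},B)$ with $B\sim C_{0}+(d-1)F$.

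First I would make this normalization precise: show that the maximal-multiplicity hypothesis selects, up to the equivalence of Lemma~\ref{lem:A1-fib-inducedbypencil}, precisely the completions of the form $(\mathbb{F}_{d-2},B)$ together with a point $q\in B$ at which $B$ meets the $(-n)$-section $C_{0}$ over the point $\pi_{n}(q)$, so that the unique singular member of $\mathcal{P}_{q}$ is $C_{0}+(d-1)F_{q}$. This is the content of matching $m_{q}=B^{2}-1$ against the decomposition $C+(\ell-\ell')F_{q}$ from Lemma~\ref{lem:Special-Pencil-Reso}. Then the claim reduces to showing that any two such data $(q_{1},B_{1})$ and $(q_{2},B_{2})$ on $\mathbb{F}_{d-2}$ yield equivalent $\mathbb{A}^{1}$-fibrations, which by Lemma~\ref{lem:Pencil-Aut-Orbit-Equivalence} (adapted to the pencils $\mathcal{P}_{q}$) amounts to exhibiting an automorphism of $\mathbb{F}_{d-2}$ carrying one configuration $B_{1}\cup F_{q_{1}}$, together with its special pencil, to the other.

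The core computation is thus a transitivity statement for $\mathrm{Aut}(\mathbb{F}_{d-2})$: the group acts transitively on the set of pairs (ample section $B\in|C_{0}+(d-1)F|$, point $q\in B\cap C_{0}$). I would argue this by first using the $\mathrm{PGL}_{2}$ acting on the base $\mathbb{P}^{1}$ through $\pi_{n}$ to move $\pi_{n}(q)$ to a fixed point, then using the fiberwise automorphisms (the unipotent part coming from $H^{0}(\mathbb{P}^{1},\mathcal{O}(n))$ acting by shears preserving $C_{0}$) to normalize the section $B$ to a single standard representative with prescribed contact $d-1$ with $C_{0}$ at $q$; the stabilizer of $C_{0}$ and a fiber is large enough (it is the relevant affine-group or $\mathbb{G}_{m}$ appearing in the proof of Proposition~\ref{prop:UncountableA1FIbComplete}) to absorb the remaining parameters, because here the contact order is maximal and leaves no modulus. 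I expect this transitivity verification to be the main obstacle, since one must check that no residual invariant survives — equivalently that the relevant orbit in the analogue of $\mathcal{Z}_{q}$ is a single point once the multiplicity is forced to be $d-1$; this is where the maximality $m=d-1$ (as opposed to $2\leq m\leq d-2$) is genuinely used, and it is exactly what distinguishes this extremal case from the infinite families produced in Theorem~\ref{thm:MainThmA1Fib-Aff}~b). Finally, I would conclude by invoking Lemma~\ref{lem:Pencil-Aut-Orbit-Equivalence} to translate this single-orbit statement back into the uniqueness of the equivalence class.
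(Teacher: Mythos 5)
Your proposal takes essentially the same route as the paper's proof: the maximality $m_{q}=B^{2}-1$ forces the completion to be $(\mathbb{F}_{B^{2}-2},B)$ with $B\sim C_{0}+(B^{2}-1)F$ and $q=B\cap C_{0}$, and the paper then establishes precisely the transitivity you identify as the core step, by lifting $\mathrm{Aut}(C_{0})$ through the surjection $\mathrm{Aut}(\mathbb{F}_{B^{2}-2},C_{0})\to\mathrm{Aut}(C_{0})$ and normalizing $B$, written in affine coordinates as the closure of a graph $y=p(x)$ with $\deg p=B^{2}-1$, to $y=x^{B^{2}-1}$ via triangular automorphisms $(x,y)\mapsto(\lambda x+\mu,\nu y+r(x))$. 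Only a cosmetic slip: $B$ meets $C_{0}$ transversely ($B\cdot C_{0}=1$), so your phrase ``prescribed contact $d-1$ with $C_{0}$ at $q$'' should instead refer to the multiplicity of $F_{q}$ in the singular member of $\mathcal{P}_{q}$; this does not affect the argument.
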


\begin{proof}
A pair $(\mathbb{F}_{n},B)$ with $B^{2}=d+2\geq2$ such that $B$
contains a point $q$ for which the singular member of the pencil
$\mathcal{P}_{q}$ has the form $C+(d+1)F_{q}$ for some irreducible
curve $C$ is necessarily equal to $(\mathbb{F}_{d},B)$ for some
section $B\sim C_{0}+(d+1)F$ of $\pi_{d}$ intersecting $C_{0}$
transversely at the point $q$, the curve $C$ being then equal to
$C_{0}$. Then the assertion follows from the fact that the group
$\mathrm{Aut}(\mathbb{F}_{d})$ acts transitively on the set of sections
$B\sim C_{0}+(d+1)F$ of $\pi_{d}$. Let us recall the argument. Since
the restriction homomorphism $\mathrm{Aut}(\mathbb{F}_{d},C_{0})\to\mathrm{Aut}(C_{0})$
is surjective, it suffices to show that for some chosen fiber $F_{0}$
of $\pi_{d}$, the action of $\mathrm{Aut}(\mathbb{F}_{d},C_{0}\cup F_{0})$
on the set of sections $B_{0}\sim C_{0}+(d+1)F$ intersecting $C_{0}$
at the point $F_{0}\cap C_{0}$ is transitive. Identifying $\mathbb{F}_{d}\setminus(C_{0}\cup F_{0})$
to $\mathbb{A}^{2}=\mathrm{Spec}(k[x,y])$ in such a way that $\pi_{d}|_{\mathbb{A}^{2}}=\mathrm{pr}_{x}$
and that the closures in $\mathbb{F}_{d}$ of the level sets of $y$
are sections of $\pi_{d}$ linearly equivalent to $C_{0}+dF$, these
sections $B_{0}$ are the closures in $\mathbb{F}_{d}$ of curves
$\Gamma_{p}\subset\mathbb{A}^{2}$ defined by equations of the form
$y=p(x)$ where $p(x)\in k[x]$ is a polynomial of degree $d+1$.
Since every automorphism of $\mathbb{A}^{2}$ of the form $(x,y)\mapsto(\lambda x+\mu,\nu y+r(x))$,
where $r(x)\in k[x]$ is a polynomial of degree at most $d$, extends
to an element of $\mathrm{Aut}(\mathbb{F}_{d},C_{0}\cup F_{0})$,
it follows that every section $B_{0}$ belongs to the $\mathrm{Aut}(\mathbb{F}_{d},C_{0}\cup F_{0})$-orbit
of the closure of the curve $\Gamma_{x^{d+1}}=\{y=x^{d+1}\}$. 
\end{proof}

\subsubsection{Equivalence classes of smooth $\mathbb{A}^{1}$-fibrations }

The following lemma is a reformulation of \cite[Lemma 5.5.5]{GiDa77},
which appeared, stated in a different language, in the Appendix of
\cite{DubFin14}. 
\begin{lem}
\label{lem:Uniqueness-Reduced-A1Fib}Let $(\mathbb{F}_{n},B)$ be
a pair as in Lemma \ref{lem:Pairs} b) and let $q\in B$ be a point
such that the singular member of the pencil $\mathcal{P}_{q}$ is
reduced. Then the isomorphism type of the $\mathbb{A}^{1}$-fibered
surface $\pi_{q}:\mathbb{F}_{n}\setminus B\to\mathbb{A}^{1}$ depends
only on the integer $B^{2}$. 
\end{lem}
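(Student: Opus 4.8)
The plan is to recognize $\pi_q$ as the restriction of the $\mathbb{P}^{1}$-fibration $\tilde{\rho}_{q}\colon\tilde{\mathbb{F}}_{n}\to\mathbb{P}^{1}$ produced by the minimal resolution $\sigma\colon\tilde{\mathbb{F}}_{n}\to\mathbb{F}_{n}$ of the pencil $\mathcal{P}_{q}$ in \S\ref{subsec:Special-pencils}, to extract from the reduced hypothesis a weighted dual graph of this relatively minimal completion that depends only on $d=B^{2}$, and then to conclude from the fact that a $\mathbb{P}^{1}$-fibered surface is reconstructed up to isomorphism from a Hirzebruch surface by blowing up the points prescribed by its degenerate fibers (\S\ref{subsec:P1-A1-Fib}(i)). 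Concretely, it suffices to prove that any two such completions, for pairs $(\mathbb{F}_{n},B,q)$ and $(\mathbb{F}_{n'},B',q')$ with $B^{2}=(B')^{2}=d$ and both singular members reduced, are isomorphic as $\mathbb{P}^{1}$-fibered pairs by an isomorphism respecting the boundary and the fibration; restricting such an isomorphism to the interiors then yields the equivalence $(\mathbb{F}_{n}\setminus B,\pi_{q})\cong(\mathbb{F}_{n'}\setminus B',\pi_{q'})$. Via Lemma~\ref{lem:A1-fib-inducedbypencil} and Corollary~\ref{cor:GizDanSurface-A1FiberType}, this simultaneously identifies the common equivalence class as that of the unique smooth $\mathbb{A}^{1}$-fibration.

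First I would make the degenerate fiber explicit. By Lemma~\ref{lem:Special-Pencil-Reso} the unique singular member of $\mathcal{P}_{q}$ is $C+m_{q}F_{q}$ with $C$ a section of $\pi_{n}$, and the reduced hypothesis means exactly $m_{q}=1$, so this member is $C\cup F_{q}$, with $C$ meeting $B$ to order $d-1$ at $q$ and $F_{q}$ transverse to $B$ there. Tracking the $d$ successive blow-ups $\sigma$ of $q$ and its infinitely near points along the common tangent direction of $B$ and $C$, one finds that $\sigma_{*}^{-1}F_{q}$ and $\sigma_{*}^{-1}C$ are disjoint $(-1)$-curves meeting $E_{1}$ and $E_{d-1}$ respectively, so that the degenerate fiber $\tilde{\rho}_{q}^{-1}(\rho_{q}(C\cup F_{q}))$ is the reduced chain
\[
\sigma_{*}^{-1}F_{q}\triangleleft E_{1}\triangleleft\cdots\triangleleft E_{d-1}\triangleleft\sigma_{*}^{-1}C
\]
of type $[-1,-2,\dots,-2,-1]$, while the section $H=E_{d}$ of self-intersection $-1$ meets $E_{d-1}$ and the fiber at infinity $F_{\infty}=\sigma_{*}^{-1}B$ meets $H$. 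The boundary $D=F_{\infty}\cup H\cup\bigcup_{i=1}^{d-1}E_{i}$ is thus the chain $[0,-1,-2,\dots,-2]$ predicted by Lemma~\ref{lem:A1-fib-inducedbypencil}, and the whole weighted graph, together with the positions of the two interior components $\sigma_{*}^{-1}C,\sigma_{*}^{-1}F_{q}$ of the degenerate fiber, is manifestly independent of $n$, of $B$ and of the chosen reduced point $q$: it depends only on $d$.

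It then remains to match two completions carrying this common graph. Since the end of the fiber chain opposite the section carries a canonically determined $(-1)$-curve, namely $\sigma_{*}^{-1}F_{q}$, I would argue by descending induction: contracting this curve on both sides produces again relatively minimal $\mathbb{P}^{1}$-fibered surfaces whose degenerate fibers are strictly shorter chains of the same combinatorial type, with the section and $F_{\infty}$ unchanged; iterating $d$ times reduces to a $\mathbb{P}^{1}$-bundle over $\mathbb{P}^{1}$ with a marked section and two marked fibers (together with one marked point on a fiber), for which $\mathrm{Aut}$ provides the isomorphism at once. Blowing the infinitely near centres back up in the matched order yields the desired isomorphism of fibered pairs, whence the equivalence. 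The invariance of the boundary chain under the choice of relatively minimal completion (\cite[Corollary 2]{GiDa75}, already invoked for Lemma~\ref{lem:A1-fib-inducedbypencil}) ensures that no other completion type can arise, so this comparison is exhaustive.

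The main obstacle is the middle step: the careful bookkeeping of the blow-up sequence needed to certify that, under the reduced hypothesis, the degenerate fiber is precisely the reduced chain above---in particular that $\sigma_{*}^{-1}F_{q}$ and $\sigma_{*}^{-1}C$ are $(-1)$-curves attached at the opposite ends $E_{1}$ and $E_{d-1}$, rather than forming some branched tree---and the attendant verification that the inductive contraction of the extremal $(-1)$-curve is canonical, so that the matching between the two completions is well defined at every stage. Once this combinatorial rigidity is in place, the reconstruction of a fibered surface from its weighted graph and the passage to the interiors are routine.
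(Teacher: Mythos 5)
Your combinatorial description of the resolution is correct (the degenerate fiber of $\tilde{\rho}_{q}$ is the reduced chain $\sigma_{*}^{-1}F_{q}\triangleleft E_{1}\triangleleft\cdots\triangleleft E_{d-1}\triangleleft\sigma_{*}^{-1}C$ with the section $E_{d}$ attached to $E_{d-1}$, exactly as in the paper), but the core of your strategy fails: it is not true that any two relatively minimal completions carrying this weighted graph are isomorphic as $\mathbb{P}^{1}$-fibered \emph{pairs}. An isomorphism of marked completions respecting the boundary chain $\sigma_{*}^{-1}B\triangleleft E_{d}\triangleleft\cdots\triangleleft E_{1}$ and the fibration would descend, by contracting $E_{d},E_{d-1},\ldots,E_{1}$, to an isomorphism of triples $(\mathbb{F}_{n},B,q)\cong(\mathbb{F}_{n'},B',q')$. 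A dimension count (for instance $d=5$, $n=1$: the pairs $(B,q)$ with $B\in|C_{0}+3F|$ a section form a $7$-dimensional family while $\dim\mathrm{Aut}(\mathbb{F}_{1})=6$) shows that such triples have positive-dimensional moduli as soon as $d\geq5$, even after imposing the open condition that the singular member of $\mathcal{P}_{q}$ be reduced. So the marked completions are genuinely non-isomorphic; only their interiors are isomorphic, and the isomorphism does not extend to these completions. Concretely, your inductive matching of blow-up centres must break down: the tower of centres $p_{1},\ldots,p_{d}$ over $\mathbb{F}_{1}$ carries $d$ parameters while the relevant automorphism group has dimension $4$, so transitivity at each stage is impossible for $d\geq5$. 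The same phenomenon is visible elsewhere in the paper: the surfaces $S_{\ell,s}$ of Notation \ref{nota:S_l,s} all admit relatively minimal completions with one common weighted dual graph (Lemma \ref{lem:S_l,s-relativel--minimalComp}) yet are pairwise non-isomorphic for distinct $s$, so a weighted graph never determines the fibered surface by itself.

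The paper's proof avoids this trap by working on the affine surface rather than on the completion. It contracts $\sigma_{*}^{-1}F_{q},E_{1},\ldots,E_{d-2},\sigma_{*}^{-1}C$ (curves partly interior to $S$) to obtain a birational \emph{morphism} $\tau:S\to A\times\mathbb{A}^{1}$ which contracts the two fiber components $C\cap S$ and $F_{q}\cap S$ to points, then reads off $S$ as the gluing of two copies of $A\times\mathbb{A}^{1}$ along $(A\setminus\{0\})\times\mathbb{A}^{1}$ by the cocycle $v_{+}\mapsto x^{2-d}v_{+}+x^{1-d}$. The reduced hypothesis is used precisely to normalize this transition function to a form with no free parameters, via coordinate changes on the two charts that do \emph{not} come from automorphisms of the completion; this is where the moduli of the pairs $(\mathbb{F}_{n},B,q)$ get absorbed. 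To repair your argument you would need either to pass to this chart-level normal form, or to allow birational maps of completions with base points along the boundary; the rigid matching of the minimal resolutions cannot be carried out.
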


\begin{proof}
Put $d=B^{2}$ and $S=\mathbb{F}_{n}\setminus B$. The singular member
of $\mathcal{P}_{q}$ has the form $C+F_{q}$ where $C$ is a prime
member of the complete linear system $|B-F_{q}|$. Without loss of
generality, we can fix an isomorphism $A=\mathbb{P}^{1}\setminus\rho_{q}(B)\cong\mathrm{Spec}(k[x])$
so that $\rho_{q}(C\cup F_{q})=\{0\}\in A$. With the notation of
subsection \ref{subsec:Special-pencils}, the total transform of $B\cup C\cup F_{q}$
in the minimal resolution $\sigma:\tilde{\mathbb{F}}_{n}\to\mathbb{F}_{n}$
of the rational map map $\rho_{q}:\mathbb{F}_{n}\dashrightarrow\mathbb{P}^{1}$
defined by $\mathcal{P}_{q}$ is a rational tree of the form \[\xymatrix@R=1.3em @C=0.9em{(\sigma_*^{-1}B,0) \ar@{}[r]|\triangleleft & (E_d,-1)\ar@{}[r]|\triangleleft & (E_{d-1},-2) \ar@{-}[d] \ar@{}[r]|\triangleleft & (E_{d-2},-2) \ar@{}[r]|\triangleleft & \cdots\cdots \ar@{}[r]|\triangleleft & (E_{1},-2) \ar@{-}[r] & (\sigma_*^{-1}F_q,-1) \\ & & (\sigma_*^{-1}C,-1).}\] 
Furthermore, the singular fiber $\tilde{\rho}_{q}^{-1}(\rho_{q}(C\cup F_{q}))=\bigcup_{i=1}^{d-1}E_{1}\cup\sigma_{*}^{-1}C\cup\sigma_{*}^{-1}F_{q}$
of the $\mathbb{P}^{1}$-fibration $\tilde{\rho}_{q}=\rho_{q}\circ\sigma:\tilde{\mathbb{F}}_{n}\to\mathbb{P}^{1}$
is reduced. By contracting successively $\sigma_{*}^{-1}F_{q}$, $E_{1},\ldots,E_{d-2}$
and $\sigma_{*}^{-1}C$, we get a birational morphism $\overline{\tau}:\tilde{\mathbb{F}}_{n}\rightarrow\mathbb{F}_{1}$
of $\mathbb{P}^{1}$-fibered surfaces over $\mathbb{P}^{1}$. The
later restricts to a morphism 
\[
\tau:S\cong\tilde{\mathbb{F}}_{n}\setminus\sigma^{-1}(B)\longrightarrow\mathbb{F}_{1}\setminus\bar{\tau}(\sigma_{*}^{-1}(B)\cup E_{d})\simeq A\times\mathbb{A}^{1}
\]
of schemes over $A$, inducing an isomorphism $S\setminus(C\cup F_{q})\stackrel{\sim}{\rightarrow}A\setminus\{0\}\times\mathbb{A}^{1}$
and contracting $C\cap S$ and $F_{q}\cap S$ to a pair of distinct
points supported on $\{0\}\times\mathbb{A}^{1}$. One can choose a
coordinate on the second factor of $A\times\mathbb{A}^{1}$ and a
pair of isomorphisms of $A$-schemes $S\setminus F_{q}\simeq A\times\mathrm{Spec}(k[u])$
and $S\setminus C\simeq A\times{\rm Spec}([u'])$ so that the restrictions
of $\tau$ to $S\setminus F_{q}$ and $S\setminus C$ coincide respectively
with the morphisms 
\[
S\setminus F_{q}\rightarrow A\times\mathbb{A}^{1},\;(x,u)\mapsto(x,xu+1)\quad\textrm{and}\quad S\setminus C\rightarrow A\times\mathbb{A}^{1},\;\left(x,u'\right)\mapsto(x,x^{d-1}u').
\]
Thus, $\pi_{q}:S\to A$ is $A$-isomorphic to the surface $W_{d}$
obtained by gluing two copies $U_{\pm}=\mathrm{Spec}(k[x][v_{\pm}])$
of $A\times\mathbb{A}^{1}$ along the open subset $(A\setminus\{0\})\times\mathbb{A}^{1}$
by the isomorphism $U_{+}\ni(x,v_{+})\mapsto(x,x^{2-d}v_{+}+x^{1-d})\in U_{-}$,
endowed with the $\mathbb{A}^{1}$-fibration $\xi{}_{d}:W_{d}\to A$
induced by the first projections on each of the open subsets $U_{\pm}$. 
\end{proof}
\begin{cor}
\label{cor:UniquenessReducedA1Fib} For every pair $(\mathbb{F}_{n},B)$
as in Lemma \ref{lem:Pairs} b), the affine surface $\mathbb{F}_{n}\setminus B$
has a unique equivalence class of smooth $\mathbb{A}^{1}$-fibration
$\pi:\mathbb{F}_{n}\setminus B\to\mathbb{A}^{1}$.
\end{cor}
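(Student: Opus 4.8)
The plan is to combine the existence statement for reduced special pencils with the rigidity result of Lemma \ref{lem:Uniqueness-Reduced-A1Fib}. For existence I would invoke Lemma \ref{lem:Special-Pencil-Reso} c): for a general point $q\in B$ the unique singular member of the pencil $\mathcal{P}_{q}$ is reduced, so the associated fibration $\pi_{q}:\mathbb{F}_{n}\setminus B\to\mathbb{A}^{1}$ has a reduced degenerate fiber (the multiplicity $m_{q}$ equals $1$) and is therefore a smooth $\mathbb{A}^{1}$-fibration. This already exhibits at least one equivalence class.

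For uniqueness, let $\pi:\mathbb{F}_{n}\setminus B\to\mathbb{A}^{1}$ be an arbitrary smooth $\mathbb{A}^{1}$-fibration. By Lemma \ref{lem:A1-fib-inducedbypencil} there exist a smooth completion $(X',B')$ of $\mathbb{F}_{n}\setminus B$, an isomorphism $\Psi:\mathbb{F}_{n}\setminus B\to X'\setminus B'$, and a point $q'\in B'$ such that $\pi=\pi_{q'}\circ\Psi$. The completion $(X',B')$ is again of the type described in Lemma \ref{lem:Pairs} b), since $\mathbb{F}_{n}\setminus B$ has non-torsion (indeed infinite cyclic) divisor class group and so cannot be a complement of a line or a conic in $\mathbb{P}^{2}$. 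Because $\Psi$ is an equivalence of $\mathbb{A}^{1}$-fibered surfaces, $\pi_{q'}$ is again smooth; by Corollary \ref{cor:GizDanSurface-A1FiberType} and the description of the degenerate fiber of $\pi_{q'}$ preceding Lemma \ref{lem:A1-fib-inducedbypencil}, this forces the multiplicity of the non-reduced component to be $1$, that is, the singular member of $\mathcal{P}_{q'}$ is reduced.

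At this point Lemma \ref{lem:Uniqueness-Reduced-A1Fib} applies and shows that the isomorphism type of the $\mathbb{A}^{1}$-fibered surface $\pi_{q'}:X'\setminus B'\to\mathbb{A}^{1}$ depends only on $(B')^{2}$. It remains to observe that $(B')^{2}=B^{2}$: indeed $X'\setminus B'\cong\mathbb{F}_{n}\setminus B$, and the self-intersection $B^{2}$ is an intrinsic invariant of the surface, being recovered from the relation $K_{\mathbb{F}_{n}\setminus B}\sim(B^{2}-2)F|_{\mathbb{F}_{n}\setminus B}$ together with the fact that the divisor class group is freely generated by the class of $F|_{\mathbb{F}_{n}\setminus B}$ (compare Theorem \ref{thm:DanGiz-Isom}). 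Consequently every smooth $\mathbb{A}^{1}$-fibration is equivalent, via $\Psi$, to the single model $\xi_{d}:W_{d}\to A$ with $d=B^{2}$ produced in the proof of Lemma \ref{lem:Uniqueness-Reduced-A1Fib}, which gives the uniqueness of the equivalence class.

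The argument is thus essentially an assembly of the preceding lemmas, the genuine content having been absorbed into Lemma \ref{lem:Uniqueness-Reduced-A1Fib}. The only points requiring care, which I expect to be the mild obstacle, are the verifications that passing to the completion furnished by Lemma \ref{lem:A1-fib-inducedbypencil} both preserves the smoothness (reducedness of every fiber) of the fibration and leaves the value of $B^{2}$ unchanged, so that the rigidity lemma is applied for one and the same $d$ and yields a single equivalence class.
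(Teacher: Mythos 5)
Your proposal is correct and follows exactly the route the paper intends for this corollary (which it states without a separate proof): existence from Lemma \ref{lem:Special-Pencil-Reso} c), reduction of an arbitrary smooth fibration to some $\pi_{q'}$ via Lemma \ref{lem:A1-fib-inducedbypencil}, the observation that smoothness forces the singular member of $\mathcal{P}_{q'}$ to be reduced, and then the rigidity of Lemma \ref{lem:Uniqueness-Reduced-A1Fib}. Your two supplementary verifications --- that $(X',B')$ is again of type b) because the class group is infinite cyclic, and that $(B')^{2}=B^{2}$ because $B^{2}$ is recovered from the canonical class in $\mathrm{Cl}(\mathbb{F}_{n}\setminus B)\cong\mathbb{Z}$ --- are exactly the right points to check and are both sound.
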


\begin{rem}
\label{rem:TwoLemmas-give-GDIso}The proofs of Lemma \ref{lem:Special-Pencil-Reso}
and Lemma \ref{lem:Uniqueness-Reduced-A1Fib} do not depend on the
Danilov-Gizatullin isomorphism theorem, and, when combined together,
they actually provide a proof of Theorem \ref{thm:DanGiz-Isom}. Indeed,
Lemma \ref{lem:Special-Pencil-Reso} c) asserts in particular that
for every pair $(\mathbb{F}_{n},B)$ there exists a point $q\in B$
such that the $\mathbb{A}^{1}$-fibration $\pi_{q}:\mathbb{F}_{n}\setminus B\to\mathbb{A}^{1}$
associated to the pencil $\mathcal{P}_{q}$ is a smooth morphism.
On the other hand, Lemma \ref{lem:Uniqueness-Reduced-A1Fib} implies
that the isomorphism type of $\pi_{q}:\mathbb{F}_{n}\setminus B\to\mathbb{A}^{1}$
as an $\mathbb{A}^{1}$-fibered surface over $\mathbb{A}^{1}$, hence
in particular as an abstract affine surface, depends only on the integer
$B^{2}$. 
\end{rem}

\subsubsection{Equivalence classes $\mathbb{A}^{1}$-fibrations with an irreducible
component of multiplicity two}

Given a pair $(\mathbb{F}_{n},B)$ as in Lemma \ref{lem:Pairs} b),
denote by $\mathcal{A}_{2}(B^{2})$ the set of equivalence classes
of $\mathbb{A}^{1}$-fibrations $\pi:\mathbb{F}_{n}\setminus B\to\mathbb{A}^{1}$
whose unique degenerate fiber has an irreducible component of multiplicity
two. By Corollary \ref{cor:GizDanSurface-A1FiberType} and Lemma \ref{lem:A1Fib-Mult-Existence},
$\mathcal{A}_{2}(2)=\emptyset$ and $\mathcal{A}_{2}(d)\neq\emptyset$
for every $d\geq3$. The aim of this subsection is to establish the
following more precise description of the sets $\mathcal{A}_{2}(d)$
for $d\geq3$. 
\begin{prop}
\label{thm:MainThm-Mult2}With the notation above, the following hold:

a) The sets $\mathcal{A}_{2}(3)$ and $\mathcal{A}_{2}(4)$ both consist
of a single element, 

b) The sets $\mathcal{A}_{2}(5)$ and $\mathcal{A}_{2}(6)$ both consist
of two elements, 

c) For every $d\geq7$, $\mathcal{A}_{2}(d)$ has cardinality larger
than or equal to that of the field $k$. 
\end{prop}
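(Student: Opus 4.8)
The plan is to analyze the sets $\mathcal{A}_2(d)$ by extending the pencil/orbit machinery already developed for $\mathbb{A}^1$-fibrations of complete type (Lemma \ref{lem:Pencil-Aut-Orbit-Equivalence}) to the affine setting. By Corollary \ref{cor:GizDanSurface-A1FiberType}, every $\mathbb{A}^1$-fibration $\pi:\mathbb{F}_n\setminus B\to\mathbb{A}^1$ has a unique degenerate fiber consisting of a reduced component together with a component of multiplicity $m$; the set $\mathcal{A}_2(d)$ corresponds to the case $m=2$. By Lemma \ref{lem:A1-fib-inducedbypencil}, any such fibration is equivalent to one of the form $\pi_{q'}$ arising from a special pencil $\mathcal{P}_{q'}$ on some smooth completion $(X',B')$ with $(B')^2=d$. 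Thus I would first set up a correspondence between elements of $\mathcal{A}_2(d)$ and orbits, under the relevant automorphism groups, of the configuration data determining such pencils: namely the pairs $(\mathbb{F}_{n'},B')$ with the distinguished point $q'\in B'$ whose singular member of $\mathcal{P}_{q'}$ has the form $C+2F_{q'}$ (so that $m_{q'}=2$ in the notation of Lemma \ref{lem:Special-Pencil-Reso}).

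The core of the argument is a parameter count. Following the construction in the proof of Lemma \ref{lem:A1Fib-Mult-Existence}, such a fibration with multiplicity-two component is obtained from a pencil $\mathcal{L}\subset|C_0+(d-i)F|$ produced by elementary transformations, where the value $m=2$ forces $i=2$ (or $i=d-2$), pinning down which Hirzebruch surface $\mathbb{F}_{n'}$ and which linear system are relevant. The freedom in choosing the section $C_{n'}$ (a prime member of $|C_0+n'F|$, where $n'=d-4$) and the base points of the elementary transformations gives a space whose dimension grows with $d$. I would compute this dimension by the same cohomological method used in Lemma \ref{lem:Degenerate-Fiber-subscheme}: the relevant linear system $|C_0+n'F|$ on $\mathbb{F}_{n'}$ has dimension computable from $h^0(\mathbb{P}^1,\mathcal{O}(n')\oplus\mathcal{O}(n'-n'))$, yielding a parameter space of dimension roughly $\lfloor\tfrac{d-5}{2}\rfloor+\text{const}$, consistent with the moduli count advertised in the introduction.

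Next I would quotient by the acting automorphism group. As in Proposition \ref{prop:UncountableA1FIbComplete}, the group $\mathrm{Aut}(\mathbb{F}_{n'},C\cup F_{q'})$ (or the corresponding $\mathrm{Aut}$ of the completion fixing the relevant boundary and special fiber) is a linear algebraic group of small dimension, typically $\mathbb{G}_m\ltimes\mathbb{G}_a$ or $\mathbb{G}_m$, so its orbits on the parameter space are at most two-dimensional. For $d\leq 6$ the parameter space has dimension at most two, and a direct case-by-case analysis—using the transitivity arguments of Proposition \ref{lem:OrbitMaximalPair} and the explicit group actions—shows that the orbit space is finite, giving the precise counts in parts a) and b). For $d\geq 7$ the parameter space has dimension at least three while orbits remain at most two-dimensional, so the set-theoretic quotient has cardinality at least that of $k$, establishing part c) exactly as the dimension-counting argument at the end of the proof of Proposition \ref{prop:UncountableA1FIbComplete}.

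The hard part will be the finiteness assertions in a) and b), where I cannot merely count dimensions but must identify the exact number of orbits. This requires pinning down genuine equivalences between a priori distinct fibrations: because Lemma \ref{lem:A1-fib-inducedbypencil} allows the completion $(X',B')$ to vary among all Hirzebruch models $\mathbb{F}_{d-2i}$ with $(B')^2=d$, two pencils living on different models may still yield equivalent $\mathbb{A}^1$-fibrations. The delicate step is therefore an analogue of the birational-rigidity argument in the proof of Lemma \ref{lem:Pencil-Aut-Orbit-Equivalence}: I would analyze the minimal resolution of a putative equivalence $\Psi$ between two such fibered surfaces, using Lemma \ref{lem:L_qB-members}-type irreducibility of pencil members to constrain the base points of $\Psi$, and thereby show that equivalences force the underlying pencils into the same automorphism orbit. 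Handling the boundary cases $d=5,6$, where exactly two orbits survive, will demand particularly careful bookkeeping of these cross-model identifications.
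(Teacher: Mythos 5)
Your proposal correctly identifies the starting reduction (Lemma \ref{lem:A1-fib-inducedbypencil}: every class in $\mathcal{A}_{2}(d)$ comes from a pencil $\mathcal{P}_{q}$ with singular member $C+2F_{q}$ on some completion), and you correctly sense that the heart of the matter is controlling equivalences between fibrations living on different Hirzebruch models. But the step you defer to --- ``an analogue of the birational-rigidity argument in the proof of Lemma \ref{lem:Pencil-Aut-Orbit-Equivalence}'' --- is precisely the gap, and it cannot be filled the way you suggest. That argument relies on two features special to the complete-type pencils $\mathcal{L}_{q,B_{m}}$: the hypothesis $m\geq3$ (used to force $\sigma_{*}^{-1}B_{m}\cdot\sigma_{*}^{-1}B\geq2$) and Lemma \ref{lem:L_qB-members} (every member other than $B+mF_{q}$ is smooth), which together rule out base points and force $\bar{\Psi}$ to be a biregular automorphism of a \emph{single} $\mathbb{F}_{n}$. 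Neither transfers to the affine-type pencils $\mathcal{P}_{q}$: here an equivalence $\Psi$ genuinely is a birational map between two \emph{different} completions $(\mathbb{F}_{d-2i},B)$ and $(\mathbb{F}_{d-2i'},B')$, so there is no single algebraic group whose orbits you can count. Without an established bijection between $\mathcal{A}_{2}(d)$ and a set of orbits, your dimension count proves nothing in either direction: for c) the set-theoretic quotient could a priori collapse onto finitely many equivalence classes, and for a), b) a $2$-dimensional group acting on a $2$-dimensional space need not have finitely many orbits, let alone exactly one or two. The parameter count itself (``dimension roughly $\lfloor\tfrac{d-5}{2}\rfloor+\text{const}$'') is also not a classification of all pairs $(\mathbb{F}_{n'},B',q')$ with $m_{q'}=2$; Lemma \ref{lem:A1Fib-Mult-Existence} only produces examples.

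The paper avoids the orbit correspondence altogether. Lemma \ref{lem:Mult(1,2)-to-Mult2} shows that $(S,\pi_{q})\sim(S,\pi_{q'})$ if and only if the open subsets $(\hat{S}_{q},\hat{\pi}_{q})$ obtained by deleting the reduced component $C$ of the degenerate fiber are equivalent; these are smooth affine surfaces fibered over $\mathbb{A}^{1}$ with a unique degenerate fiber, irreducible of multiplicity two. Such surfaces are then classified \emph{intrinsically}: by Fieseler's description of the $\mu_{2}$-cover obtained by base change along $x\mapsto x^{2}$, every one of them is equivalent to a normal form $S_{\ell,s}=\{x^{\ell}z=y^{2}-s^{2}(x)\}/\!/\mu_{2}$ (Lemma \ref{lem:SmoothMult2-S_l,s}), and a Picard group/$\mu_{2}$-torsor argument (Lemma \ref{lem:EquivClassS_l,s}) shows two such are equivalent iff $s_{2}(\lambda x)=s_{1}(x)$ for some $\lambda\in k^{*}$. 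Lemma \ref{lem:S_l,s-relativel--minimalComp} matches $\ell$ to $d-2$ via the boundary geometry, and the counts in a), b), c) are then read off directly from the normal forms ($s=1$ for $d=3,4$; $s=1$ or $1+x^{2}$ for $d=5,6$; an $\lfloor\tfrac{d-5}{2}\rfloor$-parameter family for $d\geq7$). If you want to salvage your route, you would have to prove the cross-model rigidity statement from scratch, which is essentially as hard as the intrinsic classification.
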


The proof follows from a combination of several intermediate results
established below. By Lemma \ref{lem:A1-fib-inducedbypencil}, for
a surface $S=\mathbb{F}_{n}\setminus B$ every element of $\mathcal{A}_{2}(B^{2})$
is represented by an $\mathbb{A}^{1}$-fibration $\pi_{q}:S\to\mathbb{A}^{1}$
associated to a pencil $\mathcal{P}_{q}$ on some smooth completion
$(\mathbb{F}_{n'},B')$ of $S$ whose unique singular member is a
divisor of the form $C+2F_{q}$ for some prime element $C$ of the
complete linear system $|B'-2F|$. The unique degenerate fiber of
$\pi_{q}$ then consists of the disjoint union of $C\cap S$ with
multiplicity one and of $F_{q}\cap S$ with multiplicity two. 
\begin{lem}
\label{lem:Reso-Mult(1,2)}Let $(\mathbb{F}_{n},B)$ be pair as in
Lemma \ref{lem:Pairs} b) with $d=B^{2}\geq3$ and such that there
exists a point $q\in B$ for which the singular member of the pencil
$\mathcal{P}_{q}$ is a divisor of the form $C+2F_{q}$ for some prime
element $C$ of the complete linear system $|B-2F|$. Then the total
transform $\sigma_{*}^{-1}B\cup D_{q}\cup\sigma_{*}^{-1}(C)\cup\sigma_{*}^{-1}(F_{q})$
of $B\cup C\cup F_{q}$ in the minimal resolution $\sigma:\tilde{\mathbb{F}}_{n}\to\mathbb{F}_{n}$
of the rational map $\rho_{q}:\mathbb{F}_{n}\dashrightarrow\mathbb{P}^{1}$
defined by $\mathcal{P}_{q}$ is a rational tree of the form \[\xymatrix@R=0.5em @C=0.9em{(\sigma_*^{-1}B,0) \ar@{}[r]|\triangleleft & (E_d,-1)\ar@{}[r]|\triangleleft & (E_{d-1},-2) \ar@{}[r]|\triangleleft & (E_{d-2},-2)\ar@{-}[d] \ar@{}[r]|\triangleleft & \cdots\cdots \ar@{}[r]|\triangleleft & (E_{1},-2) \ar@{-}[r] & (\sigma_*^{-1}F_q,-1) \\ & & & (\sigma_*^{-1}C,-2).}\]  
\end{lem}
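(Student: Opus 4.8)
The plan is to read off the whole configuration from the \emph{explicit} sequence of blow-ups resolving $\rho_{q}$, which was already described in subsection \ref{subsec:Special-pencils}, by tracking the proper transforms of $C$ and $F_{q}$ along the way. First I would record the mutual contact orders of the three curves at $q$. Since $F_{q}=\pi_{n}^{-1}(\pi_{n}(q))$ is a fibre while $B$ and $C$ are sections of $\pi_{n}$, the local intersections $F_{q}\cdot_{q}B$ and $F_{q}\cdot_{q}C$ are both transverse, equal to $1$. The hypothesis that $C+2F_{q}$ is a member of $\mathcal{P}_{q}$, i.e. meets $B$ with multiplicity $d=B^{2}$ at $q$, then forces $C\cdot_{q}B=d-2$; equivalently, writing $B\sim C_{0}+\ell F$ with $\ell=\tfrac{1}{2}(d+n)$ as in subsection \ref{subsec:Models}, one has $C\sim C_{0}+(\ell-2)F$, so $C$ is tangent to $B$ at $q$ to order $d-2$ while $F_{q}$ is transverse to both $B$ and $C$.

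Next I would feed this local picture into the resolution. By subsection \ref{subsec:Special-pencils}, $\sigma$ is the composition of $d$ blow-ups with centers $q=q_{1},q_{2},\ldots,q_{d}$, where $q_{i+1}$ is the point $E_{i}\cap(\text{proper transform of }B)$, and the total transform of $B$ is the chain $\sigma_{*}^{-1}B\triangleleft E_{d}\triangleleft\cdots\triangleleft E_{1}$ of type $[0,-1,-2,\ldots,-2]$. Because $F_{q}$ is transverse to $B$ at $q_{1}$, its proper transform meets $E_{1}$ at a point distinct from $q_{2}$; as every subsequent center lies on the proper transform of $B$, the transform of $F_{q}$ is untouched thereafter and ends up meeting $E_{1}$ transversely, away from the interior of the chain. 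Because $C$ meets $B$ with multiplicity $d-2$ at $q$, its proper transform passes through exactly the first $d-2$ infinitely near points $q_{1},\ldots,q_{d-2}$ and then separates from $B$, so $\sigma_{*}^{-1}C$ meets $E_{d-2}$ transversely at a point off the proper transform of $B$ (and off $F_{q}$, since $C$ and $F_{q}$ already separate after the first blow-up). This pins down every incidence in the asserted tree.

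Finally I would compute the self-intersections, all of which are now mechanical. The curve $\sigma_{*}^{-1}F_{q}$ comes from $F_{q}$ (with $F_{q}^{2}=0$) by the single blow-up at $q_{1}$, hence has self-intersection $-1$; the curve $\sigma_{*}^{-1}C$ comes from $C$, whose self-intersection is $C^{2}=(C_{0}+(\ell-2)F)^{2}=d-4$, by the $d-2$ blow-ups at the simple points $q_{1},\ldots,q_{d-2}$ of the smooth curve $C$, hence has self-intersection $(d-4)-(d-2)=-2$; and the self-intersections of the $E_{i}$ are unchanged from the chain for the total transform of $B$ recalled in subsection \ref{subsec:Special-pencils}, since attaching $\sigma_{*}^{-1}C$ and $\sigma_{*}^{-1}F_{q}$ involves no further blow-up. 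The SNC/tree property is immediate because at each stage the curves involved meet transversely. I expect the only delicate point to be the bookkeeping of contact orders, namely the claims that $C$ shares precisely the first $d-2$ infinitely near points with $B$ and that $F_{q}$ shares only $q_{1}$; once these are justified, the shape of the tree and all the self-intersection numbers follow automatically.
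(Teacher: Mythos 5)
Your proposal is correct and follows exactly the route the paper intends: the paper's own proof consists of the single remark that the assertion is ``straightforward to verify from the description of $\sigma$ given in subsection \ref{subsec:Special-pencils}'', and your argument is precisely that verification written out (the key points being $C\cdot_{q}B=d-2$ from $(C+2F_{q})\cdot_{q}B=d$ and $F_{q}\cdot_{q}B=1$, the identification of the shared infinitely near points, and the resulting self-intersection counts $C^{2}-(d-2)=-2$ and $F_{q}^{2}-1=-1$). All the computations check out.
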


\begin{proof}
The assertion is straightforward to verify from the description of
$\sigma:\tilde{\mathbb{F}}_{n}\to\mathbb{F}_{n}$ given in subsection
\ref{subsec:Special-pencils}. 
\end{proof}
\begin{notation}
\label{nota:hatS_q}Let $(\mathbb{F}_{n},B)$ be a pair as in Lemma
\ref{lem:Pairs} b) with $d=B^{2}\geq3$ and such that there exists
a point $q\in B$ for which the singular member of $\mathcal{P}_{q}$
is a divisor of the form $C+2F_{q}$. We denote by $\hat{S}_{q}$
the affine open subset $\mathbb{F}_{n}\setminus(B\cup C)$ of $S=\mathbb{F}_{n}\setminus B$
and we denote by $\hat{\pi}_{q}:\hat{S}_{q}\to\mathbb{A}^{1}$ the
$\mathbb{A}^{1}$-fibration with degenerate fiber $F_{q}\cap\hat{S}_{q}$
of multiplicity two induced by $\pi_{q}:S\to\mathbb{A}^{1}$. With
the notation of Lemma \ref{lem:Reso-Mult(1,2)}, the $\mathbb{P}^{1}$-fibered
surface $\tilde{\rho}_{q}=\rho_{q}\circ\sigma:\tilde{\mathbb{F}}_{n}\to\mathbb{P}^{1}$
is a relatively minimal SNC completion of $\hat{\pi}_{q}:\hat{S}_{q}\to\mathbb{A}^{1}$
with boundary divisor $\hat{D}_{q}=\sigma_{*}^{-1}(B)\cup D_{q}\cup\sigma_{*}^{-1}C=\sigma_{*}^{-1}B\cup\bigcup_{i=1}^{d}E_{i}\cup\sigma_{*}^{-1}C$. 
\end{notation}

Let $(\mathbb{F}_{n},B)$ and $(\mathbb{F}_{n'},B')$ with $B^{2}=(B')^{2}=d\geq3$
be smooth completions of an affine surface $S$ such that for some
points $q\in B$ and $q'\in B'$, the pencils $\mathcal{P}_{q}$ and
$\mathcal{P}_{q}'$ have a singular member of the form $C+2F_{q}$
and $C'+2F_{q'}'$ respectively. Let $\pi_{q}:S\to\mathbb{A}^{1}$,
$\hat{\pi}_{q}:\hat{S}_{q}=S\setminus C\to\mathbb{A}^{1}$, $\pi_{q'}:S\to\mathbb{A}^{1}$
and $\hat{\pi}_{q'}:\hat{S}_{q'}=S\setminus C'\to\mathbb{A}^{1}$
be the associated $\mathbb{A}^{1}$-fibrations. The next lemma reduces
the study of equivalence classes of $\mathbb{A}^{1}$-fibered surfaces
$(S,\pi_{q})$ to those of the simpler ones $(\hat{S}_{q},\hat{\pi}_{q})$. 
\begin{lem}
\label{lem:Mult(1,2)-to-Mult2}The $\mathbb{A}^{1}$-fibered surfaces
$(S,\pi_{q})$ and $(S,\pi_{q'})$ are equivalent if and only if $(\hat{S}_{q},\hat{\pi}_{q})$
and $(\hat{S}_{q'},\hat{\pi}_{q'})$ are equivalent. 
\end{lem}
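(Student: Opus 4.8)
The plan is to treat the two implications separately; the forward direction is formal, while the reverse one requires extending an isomorphism across the reduced component of the degenerate fibre.

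For the forward implication, suppose $\Psi\colon(S,\pi_q)\to(S,\pi_{q'})$ is an equivalence inducing an isomorphism $\psi$ of the base $\mathbb{A}^1$, so that $\pi_{q'}\circ\Psi=\psi\circ\pi_q$. By Corollary \ref{cor:GizDanSurface-A1FiberType} both fibrations have a unique degenerate fibre, so $\Psi$ carries the degenerate fibre of $\pi_q$ onto that of $\pi_{q'}$ and maps $\pi_q^{-1}(0)$ isomorphically, as a closed subscheme, onto $\pi_{q'}^{-1}(\psi(0))$. Writing these fibres as $(C\cap S)+2(F_q\cap S)$ and $(C'\cap S)+2(F_{q'}\cap S)$ respectively, and using that an isomorphism of schemes preserves the multiplicities of irreducible components, I conclude that $\Psi$ sends the reduced component $C\cap S$ onto $C'\cap S$. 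Hence $\Psi(S\setminus C)=S\setminus C'$, and $\Psi$ restricts to an isomorphism $\hat{S}_q\to\hat{S}_{q'}$ which, with the same $\psi$, is an equivalence between $(\hat{S}_q,\hat{\pi}_q)$ and $(\hat{S}_{q'},\hat{\pi}_{q'})$.

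For the reverse implication, let $\hat{\Psi}\colon(\hat{S}_q,\hat{\pi}_q)\to(\hat{S}_{q'},\hat{\pi}_{q'})$ be an equivalence, with base isomorphism $\hat{\psi}$, and regard it as a birational self-map $\Phi\colon S\dashrightarrow S$ restricting to an isomorphism $S\setminus C\to S\setminus C'$. It suffices to show that $\Phi$ extends to an automorphism of $S$ taking $C$ to $C'$ and satisfying $\pi_{q'}\circ\Phi=\hat{\psi}\circ\pi_q$. I would pass to the relatively minimal $\mathbb{P}^1$-fibred completions $(\tilde{\mathbb{F}}_n,\hat{D}_q)$ and $(\tilde{\mathbb{F}}_{n'},\hat{D}_{q'})$ of Notation \ref{nota:hatS_q}, whose open parts are $\hat{S}_q$ and $\hat{S}_{q'}$, and extend $\Phi$ to a birational map $\bar{\Phi}$ between them, compatible with $\tilde{\rho}_q$, $\tilde{\rho}_{q'}$ and $\hat{\psi}$. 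Since $\hat{\Psi}$ is an isomorphism on the open parts, $\bar{\Phi}$ is an isomorphism away from the boundaries; moreover $\hat{\Psi}$ maps the unique degenerate fibre $2(F_q\cap\hat{S}_q)$ of $\hat{\pi}_q$ onto that of $\hat{\pi}_{q'}$, so $\bar{\Phi}$ matches $\sigma_*^{-1}F_q$ with $\sigma_*^{-1}F_{q'}$. By Lemma \ref{lem:Reso-Mult(1,2)} the part of $\hat{D}_q$ lying in fibres, other than the fibre at infinity $\sigma_*^{-1}B$ and the section $E_d$, is the subtree $E=E_{d-1}\triangleleft\cdots\triangleleft E_1$ with the $(-2)$-curve $\sigma_*^{-1}C$ attached at $E_{d-2}$, all of whose components have self-intersection $\leq-2$.

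The crux, and the step I expect to be the main obstacle, is to promote the abstract invariance of this boundary part --- its components and self-intersections are independent of the relatively minimal completion by \cite[Corollary 2]{GiDa75} --- into the assertion that $\bar{\Phi}$ maps $E$ isomorphically onto $E'$ componentwise, and in particular $\sigma_*^{-1}C$ onto $\sigma_*^{-1}C'$. The point is that $E$ contains no $(-1)$-curve, the only $(-1)$-curve of the degenerate fibre being $\sigma_*^{-1}F_q$, which lies in the open surface and is already matched; consequently the elementary transformations relating the two completions are confined to the fibre at infinity and the section, and leave $E$ untouched. Since $\bar{\Phi}$ then also respects the incidences of $E$ with the section and with $\sigma_*^{-1}F_q$, the curve $\sigma_*^{-1}C$ --- the unique leaf of $E$ meeting neither the section nor $\sigma_*^{-1}F_q$ --- is forced onto $\sigma_*^{-1}C'$. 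Granting this, $\bar{\Phi}$ carries $\sigma^{-1}(B)=\hat{D}_q\setminus\sigma_*^{-1}C$ onto $\sigma^{-1}(B')=\hat{D}_{q'}\setminus\sigma_*^{-1}C'$ and restricts to an isomorphism $S=\tilde{\mathbb{F}}_n\setminus\sigma^{-1}(B)\to\tilde{\mathbb{F}}_{n'}\setminus\sigma^{-1}(B')=S$ extending $\hat{\Psi}$ and compatible with $\pi_q$ and $\pi_{q'}$, which completes the proof.
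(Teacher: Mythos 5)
Your proposal is correct and follows essentially the same route as the paper: the forward direction via the multiplicities of the two components of the unique degenerate fibre, and the reverse direction by extending $\hat{\Psi}$ to a birational map between the relatively minimal completions of Notation \ref{nota:hatS_q}, showing it is biregular along the boundary part in the degenerate fibre, and reading off $\sigma_{*}^{-1}C\mapsto{\sigma'}_{*}^{-1}C'$ from the combinatorics of the tree. The step you flag as the main obstacle is precisely where the paper works: rather than invoking a decomposition into elementary transformations, it takes the minimal resolution $\tilde{\mathbb{F}}_{n}\stackrel{\eta}{\leftarrow}Z\stackrel{\eta'}{\to}\tilde{\mathbb{F}}_{n'}$ of the induced map and notes that a proper base point on $D_{q}\cup\sigma_{*}^{-1}C$ would force an $\eta'$-exceptional $(-1)$-curve among the proper transforms of components all of self-intersection $\leq-2$ (the two configurations having the same number of irreducible components), a contradiction --- this is the rigorous form of your ``no $(-1)$-curve in $E$'' observation.
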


\begin{proof}
Since $C\cap S$ (resp. $C'\cap S)$ is a reduced fiber of $\pi_{q}$
(resp. $\pi_{q'}$) whereas $F_{q}\cap S$ (resp. $F_{q'}\cap S$)
is a fiber of multiplicity two of it, every equivalence of $\mathbb{A}^{1}$-fibered
surface $\Psi:(S,\pi_{q})\to(S,\pi_{q'})$ maps $C\cap S$ onto $C'\cap S$
and $F_{q}\cap S$ onto $F_{q'}\cap S$, hence restricts to equivalence
between $\hat{\Psi}:(\hat{S}_{q},\hat{\pi}_{q})\to(\hat{S}_{q'},\hat{\pi}_{q'})$.
Now assume conversely that there exists an equivalence of $\mathbb{A}^{1}$-fibered
surfaces $\hat{\Psi}:(\hat{S}_{q},\hat{\pi}_{q})\to(\hat{S}_{q'},\hat{\pi}_{q'})$.
With the notation of Lemma \ref{lem:Reso-Mult(1,2)}, let 
\[
(\tilde{\mathbb{F}}_{n},\hat{D}_{q}=\sigma_{*}^{-1}B\cup D_{q}\cup\sigma_{*}^{-1}C)\quad\textrm{and}\quad(\tilde{\mathbb{F}}_{n'},\hat{D}_{q'}'={\sigma'}_{*}^{-1}B'\cup D_{q'}'\cup{\sigma'}_{*}^{-1}C')
\]
be the relatively minimal SNC completions of $\hat{\pi}_{q}$ and
$\hat{\pi}_{q'}$ respectively. The isomorphism $\hat{\Psi}$ induces
a birational map of $\mathbb{P}^{1}$-fibered surfaces $\hat{\Psi}:(\tilde{\mathbb{F}}_{n},\tilde{\rho}_{q})\dashrightarrow(\tilde{\mathbb{F}}_{n'},\tilde{\rho}_{q'})$.
In particular, $\hat{\Psi}$ maps the section $E_{d}$ of $\tilde{\rho}_{q}$
isomorphically onto the section $E_{d}'$ of $\tilde{\rho}_{q'}$,
which implies that any proper base point of $\hat{\Psi}$ is supported
either on $D_{q}\cup\sigma_{*}^{-1}C$ or on $\sigma_{*}^{-1}B$.
Assume that $\hat{\Psi}$ has a proper base point supported on $D_{q}\cup\sigma_{*}^{-1}C$
and let $\tilde{\mathbb{F}}_{n}\stackrel{\eta}{\leftarrow}Z\stackrel{\eta'}{\to}\tilde{\mathbb{F}}_{n'}$
be the minimal resolution of $\hat{\Psi}$. Since $\hat{\Psi}$ maps
$E_{d}$ isomorphically onto $E_{d}'$ and $F_{q}$ onto $F_{q'}$,
it follows that $\eta'$ contracts $\eta^{-1}(D_{q}\cup\sigma_{*}^{-1}C)$
onto $D_{q'}'\cup{\sigma'}_{*}^{-1}C'$. Since $D_{q}\cup\sigma_{*}^{-1}C$
and $D_{q'}'\cup{\sigma'}_{*}^{-1}C'$ are SNC divisors with the same
number of irreducible components, the minimality assumption implies
that the proper transform in $Z$ of $D_{q}\cup\sigma_{*}^{-1}C$
contains an $\eta'$-exceptional $(-1)$-curve. But this is impossible
since all these curves have self-intersection $\leq-2$ in $Z$. For
the same reason, $\hat{\Psi}^{-1}$ has no proper base point on $D_{q'}'\cup{\sigma'}_{*}^{-1}C'$.
So, $\hat{\Psi}$ is well-defined on an open neighborhood $U$ of
$\hat{D}_{q}\setminus\sigma_{*}^{-1}B$ in $\tilde{\mathbb{F}}_{n}$
and $\hat{\Psi}|_{U}$ is an isomorphism onto an open neighborhood
$U'$ of $\hat{D}_{q'}\setminus{\sigma'}_{*}^{-1}B'$ in $\tilde{\mathbb{F}}_{n'}$.
The geometry of the divisors $\hat{D}_{q}\setminus\sigma_{*}^{-1}B$
and $\hat{D}_{q'}\setminus{\sigma'}_{*}^{-1}B'$ and the fact that
$\hat{\Psi}$ maps $\sigma_{*}^{-1}F_{q}$ onto ${\sigma'}_{*}^{-1}F_{q'}'$
imply that $\hat{\Psi}(\sigma_{*}^{-1}C)={\sigma'}_{*}^{-1}C'$ and
hence, that $\hat{\Psi}$ induces an equivalence of $\mathbb{A}^{1}$-fibered
surfaces $\Psi:(S,\pi_{q})\to(S,\pi_{q'})$. 
\end{proof}
To study equivalence classes of $\mathbb{A}^{1}$-fibered surfaces
$(\hat{S}_{q},\hat{\pi}_{q})$, we now introduce two auxiliary families
of surfaces. 
\begin{notation}
\label{nota:tildeS_l,s}For every integer $\ell\geq1$ and every polynomial
$s\in k[x^{2}]\subset k[x]$ of degree $<\ell$ with $s(0)=1$, denote
by $\tilde{S}_{\ell,s}$ the surface in $\mathbb{A}^{3}=\mathrm{Spec}(k[x,y,z])$
with equation $x^{\ell}z=y^{2}-s^{2}(x)$. The morphism $\tilde{\pi}_{\ell,s}=\mathrm{pr}_{x}:\tilde{S}_{\ell,s}\to\mathbb{A}^{1}$
is a smooth $\mathbb{A}^{1}$-fibration with unique degenerate fiber
$\tilde{\pi}_{\ell,s}^{-1}(0)$ consisting of two irreducible components
$\{x=y\pm1=0\}$. The morphism $\tilde{\pi}_{\ell,s}$ is equivariant
for the actions of the group $\mu_{2}=\{\pm1\}$ given by $(-x,-y,(-1)^{\ell}z)$
on $\tilde{S}_{\ell,s}$ and by $x\mapsto-x$ on $\mathbb{A}^{1}$.
As a scheme over $\mathbb{A}^{1}$, $\tilde{S}_{\ell,s}$ is $\mu_{2}$-equivariantly
isomorphic to the surface $W_{2x^{\ell}s(x)}^{(-1)^{1-\ell}}$ obtained
by gluing two copies 
\[
U_{\pm}=\tilde{S}_{\ell,s}\setminus\{x=y\mp s(x)=0\}=\mathrm{Spec}(k[x][u_{\pm}]),\quad\textrm{where }u_{\pm}=x^{-\ell}(y-s(x))=(y+s(x))^{-1}z
\]
of $\mathbb{A}^{1}\times\mathbb{A}^{1}$ over $(\mathbb{A}^{1}\setminus\{0\})\times\mathbb{A}^{1}$
by the isomorphism $U_{+}\ni(x,u_{+})\mapsto(x,u_{+}+2x^{-\ell}s(x))\in U_{-}$,
endowed with the $\mu_{2}$-action $U_{+}\ni(x,u_{+})\mapsto(-x,(-1)^{1-\ell}u_{+})\in U_{-}$
and with the $\mathbb{A}^{1}$-fibration $\theta_{2x^{\ell}s(x)}:W_{2x^{\ell}s(x)}^{(-1)^{1-\ell}}\to\mathbb{A}^{1}$
induced by the first projections on each of the open subsets $U_{\pm}$. 
\end{notation}

\begin{notation}
\label{nota:S_l,s}The categorical quotient $S_{\ell,s}=\tilde{S}_{\ell,s}/\!/\mu_{2}$
in the category of affine schemes of the fixed point free $\mu_{2}$-action
on $\tilde{S}_{\ell,s}$ is a geometric quotient and the quotient
morphism $\Phi_{\ell,s}:\tilde{S}_{\ell,s}\to S_{\ell,s}$ is a nontrivial
$\mu_{2}$-torsor, in particular, $S_{\ell,s}$ is a smooth affine
surface. The $\mathbb{A}^{1}$-fibration $\tilde{\pi}_{\ell,s}:\tilde{S}_{\ell,s}\to\mathbb{A}^{1}$
descends to an $\mathbb{A}^{1}$-fibration $\pi_{\ell,s}:S_{\ell,s}\to\mathbb{A}^{1}/\!/\mu_{2}=\mathrm{Spec}(k[x^{2}])$
with $\pi_{\ell,s}^{-1}(0)$ as a unique degenerate fiber and we have
a commutative diagram \[\xymatrix{\tilde{S}_{\ell,s} \ar[r]^-{\Phi_{\ell,s}} \ar[d]_{\tilde{\pi}_{\ell,s}} & S_{\ell,s}=\tilde{S}_{\ell,s}/\!/\mu_2 \ar[d]^{\pi_{\ell,s}} \\ \mathbb{A}^1=\mathrm{Spec}(k[x]) \ar[r]^{\phi} & \mathbb{A}^1=\mathrm{Spec}(k[x^2]),}\]
where $\phi:\mathbb{A}^{1}\to\mathbb{A}^{1}/\!/\mu_{2}$ is the quotient
morphism induced by the inclusion $k[x^{2}]\subset k[x]$. Since $\Phi_{\ell,s}^{-1}(\pi_{\ell,s}^{-1}(0))=\tilde{\pi}_{\ell,s}^{-1}(0)$
consists of two component which are exchanged by the $\mu_{2}$-action
and since $\Phi$ is \'etale whereas $\phi$ is totally ramified
of ramification index $2$ over $0$, it follows that $\pi_{\ell,s}^{-1}(0)$
is irreducible, of multiplicity $2$. The Picard group $\mathrm{Pic}(S_{\ell,s})$
is isomorphic to $\mathbb{Z}_{2}$, generated by the class of the
ideal sheaf of the curve $F_{\ell,s}=(\pi_{\ell,s}^{-1}(0))_{\mathrm{red}}$
of $S_{\ell,s}$. 
\end{notation}

\begin{lem}
\label{lem:SmoothMult2-S_l,s}Every smooth affine surface $S$ endowed
with an $\mathbb{A}^{1}$-fibration $\pi:S\to\mathbb{A}^{1}$ whose
unique degenerate fiber is irreducible and of multiplicity two is
equivalent to an $\mathbb{A}^{1}$-fibered surface $\pi_{\ell,s}:S_{\ell,s}\to\mathbb{A}^{1}$
for some $\ell\geq1$. 
\end{lem}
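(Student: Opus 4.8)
The plan is to reduce the statement to the explicit presentation of the model surfaces in Notation~\ref{nota:tildeS_l,s} and Notation~\ref{nota:S_l,s} by passing to a canonical étale double cover and then normalizing a gluing datum. Write the base as $\mathbb{A}^1=\mathrm{Spec}(k[w])$, let $F=(\pi^{-1}(0))_{\mathrm{red}}$ be the reduced degenerate fiber, so that $\pi^{-1}(0)=2F$ scheme-theoretically with $F\cong\mathbb{A}^1$ (each component of an affine-type degenerate fiber being an affine line), and set $w=x^2$.

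First I would construct the double cover as $\tilde S$, the normalization of $S\times_{\mathrm{Spec}(k[w])}\mathrm{Spec}(k[x])$. Since $S$ is smooth and $\pi^{-1}(0)=2F$, near $F$ one has $\pi^{*}w=g^{2}u$ with $g$ a local equation of $F$ and $u$ a unit; introducing $\eta=x/g$ with $\eta^{2}=u$ exhibits the normalization as étale over $S$ along $F$, so $\Phi\colon\tilde S\to S$ is an étale $\mu_2$-torsor. Connectedness is automatic: $k(S)=k(w)(t)$ is purely transcendental over $k(w)$, whence $k(S)\otimes_{k(w)}k(x)=k(x)(t)$ is a field, so $\tilde S$ is irreducible and the torsor nontrivial. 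The function $x$ with $x^{2}=\Phi^{*}\pi^{*}w$ defines an $\mathbb{A}^1$-fibration $\tilde\pi\colon\tilde S\to\mathrm{Spec}(k[x])$ together with a free involution $\iota$ with $\iota^{*}x=-x$. As $2\,\mathrm{div}(x)=\Phi^{*}(2F)=2\,\Phi^{-1}(F)$, the fiber $\tilde\pi^{-1}(0)=\mathrm{div}(x)=\Phi^{-1}(F)$ is reduced; and since $F\cong\mathbb{A}^1$ admits no connected étale double cover, $\Phi^{-1}(F)=D_{+}\sqcup D_{-}$ is a disjoint union of two affine lines exchanged by $\iota$. Thus $\tilde\pi$ is a smooth $\mathbb{A}^1$-fibration whose only degenerate fiber, over $0$, is reduced with two components.

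Next I would reconstruct $\tilde S$ from a gluing datum matching the presentation $W^{(-1)^{1-\ell}}_{2x^{\ell}s}$ of Notation~\ref{nota:tildeS_l,s}. The open sets $U_{\pm}=\tilde S\setminus D_{\mp}$ are smooth $\mathbb{A}^1$-fibrations over $\mathrm{Spec}(k[x])$ with no degenerate fiber, hence trivial $\mathbb{A}^1$-bundles $U_{\pm}\cong\mathrm{Spec}(k[x][u_{\pm}])$, glued along $\tilde\pi^{-1}(\mathbb{A}^1\setminus\{0\})$ by an affine transition $u_{-}=c\,x^{n}u_{+}+b(x)$ with $c\in k^{*}$, $n\in\mathbb{Z}$, $b\in k[x,x^{-1}]$. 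The key device is to choose $u_{-}$ as the $\iota$-transport of $u_{+}$: compatibility of the transition with $\iota$ (which sends $x\mapsto-x$ and swaps the charts) then forces $n=0$ and $c=\pm1$ and imposes the parity $b(-x)=-c\,b(x)$. An $\iota$-equivariant triangular change of the fiber coordinate, solvable precisely because every exponent occurring in $b$ satisfies $(-1)^{i}\neq c$, removes the polynomial part of $b$, leaving a nonzero Laurent tail $b=\sum_{j=1}^{\ell}\beta_{j}x^{-j}$ with $\beta_{\ell}\neq0$ (nonzero, else the gluing would extend over $x=0$ and violate separatedness, which also gives $\ell\geq1$). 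The parity constraint forces all $j$ to have the parity of $\ell$ and $c=(-1)^{1-\ell}$, so $b=x^{-\ell}\tilde s(x)$ with $\tilde s\in k[x^{2}]$ of degree $<\ell$ and $\tilde s(0)=\beta_{\ell}\neq0$; a simultaneous rescaling $u_{\pm}\mapsto\lambda u_{\pm}$ normalizes $\tilde s$ to $2s$ with $s(0)=1$. This identifies $(\tilde S,\tilde\pi,\iota)$ with $(\tilde S_{\ell,s},\tilde\pi_{\ell,s},\mu_2)$ over $\mathrm{Spec}(k[x])$.

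Finally, taking $\mu_2$-quotients of this equivariant identification yields $S\cong\tilde S/\!/\mu_2\cong\tilde S_{\ell,s}/\!/\mu_2=S_{\ell,s}$ compatibly with the fibrations over $\mathrm{Spec}(k[x])/\!/\mu_2=\mathrm{Spec}(k[x^{2}])$, hence the equivalence $(S,\pi)\sim(S_{\ell,s},\pi_{\ell,s})$. I expect the main obstacle to be the third step: organizing the coordinate normalization so that the $\mu_2$-symmetry is preserved throughout and cleanly forces $n=0$, $c=(-1)^{1-\ell}$ and $s\in k[x^{2}]$ with $s(0)=1$, in particular the interplay between separatedness (nonvanishing of the Laurent tail, giving $\ell\geq1$) and the parity constraint dictated by the involution.
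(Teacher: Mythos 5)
Your argument is essentially the paper's: both pass to the normalization $\tilde S$ of the base change of $(S,\pi)$ along the double cover $x\mapsto x^{2}$, present the resulting smooth $\mathbb{A}^{1}$-fibration with reduced two-component degenerate fiber as a $\mu_{2}$-equivariant gluing of two copies of $\mathbb{A}^{1}\times\mathbb{A}^{1}$ over the punctured base, use the parity constraint imposed by the involution to normalize the transition function to $2x^{-\ell}s(x)$ with $s\in k[x^{2}]$, $s(0)=1$, and then descend to the $\mu_{2}$-quotient. The one step you assert rather than prove --- that $U_{\pm}=\tilde S\setminus D_{\mp}$ are trivial $\mathbb{A}^{1}$-bundles over $\mathrm{Spec}(k[x])$, which is what yields the gluing presentation --- is precisely where the paper invokes \cite[Example 1.6 and Theorem 1.7]{Fie94}, so your route is the same with that citation unwound (and with the $\mu_{2}$-normal form carried by the constant $c$ in the transition rather than by the $\varepsilon$ in the involution, an equivalent choice of coordinates).
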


\begin{proof}
We can assume that $\mathbb{A}^{1}=\mathrm{Spec}(k[t])$ and that
$\pi^{-1}(0)$ is the degenerate fiber of $\pi$. Let $\phi:A=\mathrm{Spec}(k[x])\to\mathbb{A}^{1}$
be the ramified $\mu_{2}$-cover $x\mapsto t=x^{2}$ and let $\nu:\tilde{S}\to S\times_{\pi,\mathbb{A}^{1},\phi}A$
be the normalization of $S\times_{\pi,\mathbb{A}^{1},\phi}A$, endowed
with the $\mu_{2}$-action lifting that on the second factor. By \cite[Example 1.6 and Theorem 1.7]{Fie94},
$\tilde{\pi}=\mathrm{pr}_{1}\circ\nu:\tilde{S}\to A$ is $\mu_{2}$-equivariantly
isomorphic as an $A$-scheme to an affine surface $W_{f}^{\varepsilon}$
obtained by gluing two copies $U_{\pm}=\mathrm{Spec}(k[x][u_{\pm}])$
of $A\times\mathbb{A}^{1}$ over $(A\setminus\{0\})\times\mathbb{A}^{1}$
by an isomorphism of the form $U_{+}\ni(x,u_{+})\mapsto(x,u_{+}+f(x))\in U_{-}$
for some $f\in k[x^{-1}]\setminus k$, endowed with a $\mu_{2}$-action
of the form $U_{+}\ni(x,u_{+})\mapsto(-x,\varepsilon u_{+})\in U_{-}$,
where $\varepsilon=1$ or $-1$, viewed as a scheme over $A$ via
the $\mathbb{A}^{1}$-fibration $\theta_{f}:W_{f}^{\varepsilon}\to A$
induced by the first projections on each of the open subsets $U_{\pm}$.
If $\varepsilon=1$, we have $f(-x)=-f(x)$, which implies that the
pole order $\ell=-\mathrm{ord}_{0}f$ of $f$ at $0$ is odd. The
polynomial $\sigma=\frac{1}{2}x^{\ell}f(x)\in k[x^{2}]\setminus x^{2}k[x^{2}]$
can be written in the form $\sigma=\lambda(s(x)+x^{\ell}r(x))$ for
some $s\in k[x^{2}]$ of degree $<\ell$ with $s(0)=1$ and $\lambda\in k^{*}$
and the local isomorphisms
\[
U_{\pm}=\mathrm{Spec}(k[x][u_{\pm}])\to U'_{\pm}=\mathrm{Spec}(k[x][u'_{\pm}]),\,(x,u_{\pm})\to(x,\lambda^{-1}(u_{\pm}\pm r(x)))
\]
glue to a $\mu_{2}$-equivariant isomorphism $W_{f}^{1}\to W_{2x^{-\ell}s(x)}^{1}\cong\tilde{S}_{\ell,s}$
of $A$-schemes. The latter induces in turn an isomorphism $S=\tilde{S}/\!/\mu_{2}=W_{f}^{1}\cong\tilde{S}_{\ell,s}/\!/\mu_{2}=S_{\ell,s}$
of $\mathbb{A}^{1}$-fibered surfaces over $\mathbb{A}^{1}$. If $\varepsilon=-1$,
then $f(-x)=f(x)$, $\ell=-\mathrm{ord}_{0}f$ is even and the same
argument shows that $\pi:S\to\mathbb{A}^{1}$ is isomorphic as a scheme
over $\mathbb{A}^{1}$ to $\pi_{\ell,s}:S_{\ell,s}=W_{2x^{-\ell}s(x)}^{-1}/\!/\mu_{2}\to\mathbb{A}^{1}$
where $s$ is the unique polynomial of degree $<\ell$ with $s(0)=1$
such that $\frac{1}{2}x^{\ell}f(x)=\lambda(s(x)+x^{\ell}r(x))\in k[x^{2}]\setminus x^{2}k[x^{2}]$.
\end{proof}
\begin{lem}
\label{lem:EquivClassS_l,s}Let $(S_{\ell_{i},s_{i}},\pi_{\ell_{i},s_{i}})$,
$i=1,2$ be $\mathbb{A}^{1}$-fibered affine surfaces as in Notation
\ref{nota:S_l,s}. Then the following are equivalent:

a) The $\mathbb{A}^{1}$-fibered surfaces $(S_{\ell_{1},s_{1}},\pi_{\ell_{1},s_{1}})$
and $(S_{\ell_{2},s_{2}},\pi_{\ell_{2},s_{2}})$ are equivalent,

b) The surfaces $S_{\ell_{1},s_{1}}$ and $S_{\ell_{2},s_{2}}$ are
isomorphic,

c) There exists $\lambda\in k^{*}$ such that $s_{2}(\lambda x)=s_{1}(x)$. 
\end{lem}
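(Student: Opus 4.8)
The plan is to prove the cyclic implications (a)$\Rightarrow$(b)$\Rightarrow$(c)$\Rightarrow$(a), the interesting content being concentrated in (b)$\Rightarrow$(c). The implication (a)$\Rightarrow$(b) is immediate, an equivalence of $\mathbb{A}^{1}$-fibered surfaces being in particular an isomorphism of the underlying surfaces. For (c)$\Rightarrow$(a) I would argue directly on the double covers $\tilde{S}_{\ell_{i},s_{i}}$ of Notation \ref{nota:tildeS_l,s}. As the argument for the converse direction shows that the integer $\ell$ is an isomorphism invariant of $S_{\ell,s}$, and since $s_{2}(\lambda x)=s_{1}(x)$ forces $\deg s_{1}=\deg s_{2}$, the relevant case is $\ell_{1}=\ell_{2}=:\ell$. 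In that case, from $s_{2}(\lambda x)=s_{1}(x)$ I would write down the rescaling $(x,y,z)\mapsto(\lambda^{-1}x,y,\lambda^{\ell}z)$, which carries the hypersurface $x^{\ell}z=y^{2}-s_{1}^{2}(x)$ isomorphically onto $x^{\ell}z=y^{2}-s_{2}^{2}(x)$, commutes with the two $\mu_{2}$-actions $(x,y,z)\mapsto(-x,-y,(-1)^{\ell}z)$, and is fibered over the automorphism $x\mapsto\lambda^{-1}x$ of $\mathbb{A}^{1}$. Passing to the $\mu_{2}$-quotients of Notation \ref{nota:S_l,s}, it descends to an equivalence $(S_{\ell,s_{1}},\pi_{\ell,s_{1}})\to(S_{\ell,s_{2}},\pi_{\ell,s_{2}})$ over the induced automorphism $x^{2}\mapsto\lambda^{-2}x^{2}$ of the base.

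For (b)$\Rightarrow$(c) my strategy is to lift an abstract isomorphism to the explicit covers. Since $S_{\ell,s}$ is affine with only constant units and $\mathrm{Pic}(S_{\ell,s})\cong\mathbb{Z}/2\mathbb{Z}$ (Notation \ref{nota:S_l,s}), the Kummer sequence identifies the \'etale cohomology group $H^{1}(S_{\ell,s},\mu_{2})$ with $\mathrm{Pic}(S_{\ell,s})[2]\cong\mathbb{Z}/2\mathbb{Z}$; thus $S_{\ell,s}$ carries a unique nontrivial connected \'etale $\mu_{2}$-torsor, namely $\Phi_{\ell,s}:\tilde{S}_{\ell,s}\to S_{\ell,s}$. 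Consequently any isomorphism $\Theta:S_{\ell_{1},s_{1}}\to S_{\ell_{2},s_{2}}$ lifts to a $\mu_{2}$-equivariant isomorphism $\tilde{\Theta}:\tilde{S}_{\ell_{1},s_{1}}\to\tilde{S}_{\ell_{2},s_{2}}$ of the hypersurfaces of Notation \ref{nota:tildeS_l,s}. It then remains to show that $\tilde{\Theta}$ is compatible, up to a linear change of the base coordinate, with the two projections $\mathrm{pr}_{x}$, after which the relation on the $s_{i}$ can be extracted from the explicit gluing data.

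The \emph{main obstacle} is precisely this compatibility with the fibration, because the covers $\tilde{S}_{\ell,s}$ carry many inequivalent $\mathbb{A}^{1}$-fibrations: already for $\ell=1$, $s=1$ the cover is the smooth affine quadric, on which $\mathrm{pr}_{x}$ is just one $\mu_{2}$-invariant member of an infinite family. The feature I would exploit is the interplay between $\mu_{2}$-equivariance and the SNC completion. The involution exchanges the two components $L_{\pm}=\{x=0,\,y=\pm 1\}$ of the reduced degenerate fiber $\tilde{\pi}_{\ell,s}^{-1}(0)$, and $x$ is, up to a scalar, the unique $\mu_{2}$-anti-invariant regular function with divisor $L_{+}+L_{-}$. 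Extending $\Theta$ to the relatively minimal $\mathbb{P}^{1}$-fibered completions of Lemma \ref{lem:Reso-Mult(1,2)} and comparing their boundary weighted graphs, which are invariants of the abstract surfaces, should force $\ell_{1}=\ell_{2}$ and identify the distinguished curve $(\pi_{\ell,s}^{-1}(0))_{\mathrm{red}}$, hence identify $\tilde{\Theta}(L_{+}\cup L_{-})$ upstairs with the degenerate fiber of $\tilde{\pi}_{\ell_{1},s_{1}}$. Then $\tilde{\Theta}^{*}x_{2}$ is $\mu_{2}$-anti-invariant with divisor $L_{+}+L_{-}$, so $\tilde{\Theta}^{*}x_{2}=\lambda x_{1}$ for some $\lambda\in k^{*}$ and $\tilde{\Theta}$ covers $x\mapsto\lambda x$. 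Finally, reading $\tilde{\Theta}$ in the trivializing charts $U_{\pm}$ of Notation \ref{nota:tildeS_l,s}, it must carry the gluing cocycle $2x^{-\ell}s_{1}(x)$ to $2x^{-\ell}s_{2}(x)$ up to the changes allowed for an $\mathbb{A}^{1}$-bundle, and matching these cocycles, together with the normalization $s_{i}(0)=1$, yields $s_{2}(\lambda x)=s_{1}(x)$, which is (c).
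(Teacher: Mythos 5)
Your overall architecture coincides with the paper's: the same cyclic scheme a)$\Rightarrow$b)$\Rightarrow$c)$\Rightarrow$a), the same Kummer-sequence computation $H^{1}_{\mathrm{\acute{e}t}}(S_{\ell,s},\mu_{2})\cong\mathrm{Pic}(S_{\ell,s})\cong\mathbb{Z}/2\mathbb{Z}$ forcing any abstract isomorphism to lift to a $\mu_{2}$-equivariant isomorphism of the double covers $\tilde{S}_{\ell_{i},s_{i}}$, and the same explicit rescaling for c)$\Rightarrow$a). (Minor slip there: with the convention $s_{2}(\lambda x)=s_{1}(x)$ the correct map is $(x,y,z)\mapsto(\lambda x,y,\lambda^{-\ell}z)$, as one checks by substituting $X=\lambda x$; your $(x,y,z)\mapsto(\lambda^{-1}x,y,\lambda^{\ell}z)$ would need the roles of $s_{1}$ and $s_{2}$ reversed.) The genuine divergence is in the second half of b)$\Rightarrow$c): the paper closes the argument at that point by invoking the classification of isomorphisms between Danielewski surfaces $x^{\ell}z=Q(x,y)$ from \cite[Proposition 3.6]{DubPo09}, which directly yields $\ell_{1}=\ell_{2}$ and $s_{2}^{2}(\lambda x)=\mu^{2}s_{1}^{2}(x)$, while you attempt to reprove that input geometrically. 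Your cocycle computation at the end is correct and, together with the normalization $s_{i}(0)=1$, does give exactly c) once one knows that $\tilde{\Theta}$ carries $L_{+}\cup L_{-}$ onto $L'_{+}\cup L'_{-}$.

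That last point is where your argument is not yet a proof. You correctly identify it as the main obstacle, but the sentence ``comparing their boundary weighted graphs \ldots{} should force $\ell_{1}=\ell_{2}$ and identify the distinguished curve'' is an assertion, not an argument: an isomorphism of the open surfaces only induces a birational map between completions, and one must rule out that it moves the multiple fiber. Two ways to close this. Either carry out the birational analysis of completions explicitly (this is in effect what \cite[Proposition 3.6]{DubPo09} does, and what the paper itself does in the analogous Lemma \ref{lem:Mult(1,2)-to-Mult2}: show that a hypothetical base point on the fiber-at-zero part of the boundary would require a $(-1)$-curve where only curves of self-intersection $\leq-2$ are available). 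Or, more structurally, observe that for $\ell\geq3$ (the only case where c) has content, since $s=1$ is forced for $\ell\leq2$) the SNC-minimal boundary of Lemma \ref{lem:S_l,s-relativel--minimalComp} is a tree with a branch point whose two branches away from the $0$-curve consist of $(-2)$-curves; hence it cannot be transformed into a chain, $S_{\ell,s}$ is not a Gizatullin surface, and all $\mathbb{A}^{1}$-fibrations of affine type on it share the same general fibers. Any isomorphism $\Theta$ then intertwines $\pi_{\ell_{1},s_{1}}$ and $\pi_{\ell_{2},s_{2}}$ up to an automorphism of $\mathbb{A}^{1}$, so it sends the unique degenerate fiber to the unique degenerate fiber, which pins down $F_{\ell_{1},s_{1}}\mapsto F_{\ell_{2},s_{2}}$, hence $\tilde{\Theta}(L_{+}\cup L_{-})=L'_{+}\cup L'_{-}$ and $\tilde{\Theta}^{*}x_{2}=\lambda x_{1}$ as you want. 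With either completion of this step, your route is a legitimate self-contained alternative to the citation; as written, it has a gap exactly at the point the paper outsources.
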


\begin{proof}
The implication a)$\Rightarrow$b) is clear. Now put $S_{1}=S_{\ell_{1},s_{1}}$,
$S_{2}=S_{\ell_{2},s_{2}}$ and assume that there exists an isomorphism
$\Psi:S_{1}\to S_{2}$. Since $\mathrm{Pic}(S_{2})\cong H_{\textrm{ét}}^{1}(S_{2},\mathcal{O}_{S_{2}}^{*})\cong\mathbb{Z}_{2}$
and $H^{0}(S_{2},\mathcal{O}_{S_{2}}^{*})=k^{*}$, the long exact
sequence of \'etale cohomology associated to the short exact sequence
of sheaves of abelian groups
\[
1\to\mu_{2}\to\mathcal{O}_{S_{2}}^{*}\stackrel{f\mapsto f^{2}}{\longrightarrow}\mathcal{O}_{S_{2}}^{*}\to1
\]
implies that $H_{\mathrm{\acute{e}t}}^{1}(S_{2},\mu_{2})=\mathbb{Z}_{2}$,
generated by the class of the $\mu_{2}$-torsor $\Phi_{\ell,s}:\tilde{S}_{\ell_{2},s_{2}}\to S_{2}$.
Since $\Psi\circ\Phi_{\ell_{1},s_{1}}:\tilde{S}_{\ell_{1},s_{1}}\to S_{2}$
is a nontrivial $\mu_{2}$-torsor, it follows that there exists a
unique $\mu_{2}$-equivariant isomorphism $\tilde{\Psi}:\tilde{S}_{\ell_{1},s_{1}}\to\tilde{S}_{\ell_{2},s_{2}}$
such that $\tilde{\Psi}\circ\Phi_{\ell_{1},s_{1}}=\Phi_{\ell_{2},s_{2}}\circ\Psi$.
We deduce from \cite[Proposition 3.6]{DubPo09} that $\ell_{1}=\ell_{2}$
and that there exists a pair $(\lambda,\mu)\in k^{*}\times k^{*}$
such that $s_{2}^{2}(\lambda x)=\mu^{2}s_{1}^{2}(x)$. Since $s_{1}(0)=s_{2}(0)=1$,
the only possibility is that $\mu=\pm1$ and the implication b)$\Rightarrow$c)
follows. The last implication c)$\Rightarrow$a) follows from the
observation that for $\ell_{1}=\ell_{2}=\ell$ and $s_{2}(\lambda x)=s_{1}(x)$,
the morphism $\tilde{\Psi}:\tilde{S}_{\ell,s_{1}}\to\tilde{S}_{\ell,s_{2}}$
defined by $(x,y,z)\mapsto(\lambda x,y,\lambda^{-\ell}z)$ is a $\mu_{2}$-equivariant
equivalence between the $\mathbb{A}^{1}$-fibered surfaces $(\tilde{S}_{\ell,s_{1}},\tilde{\pi}_{\ell,s_{1}})$
and $(\tilde{S}_{\ell,s_{2}},\tilde{\pi}_{\ell,s_{2}})$ which descends
to an equivalence between $(S_{\ell,s_{1}},\pi_{\ell,s_{1}})$ and
$(S_{\ell,s_{2}},\pi_{\ell,s_{2}})$. 
\end{proof}
\begin{example}
\label{exa:Moduli_Sl,s}For every $\ell\geq5$, put $m=\left\lfloor \tfrac{\ell-3}{2}\right\rfloor $,
$R=k[a_{1},\ldots a_{m}]$ and $\mathfrak{s}(x)=1+x^{2}+\sum_{i=2}^{m}a_{i}x^{2i}\in R[x^{2}]$.
Let $V=\mathrm{Spec}(R)\cong\mathbb{A}^{m}$ and let $\mathfrak{S}_{\ell}$
be the quotient of the closed subscheme $\tilde{\mathcal{\mathfrak{S}}}_{\ell}\subset V\times\mathbb{A}^{3}$
with equation $x^{\ell}z=y^{2}-\mathfrak{s}^{2}(x)$ by the $\mu_{2,V}$-action
$(x,y,z)\mapsto(-x,-y,(-1)^{\ell}z)$. By Lemma \ref{lem:EquivClassS_l,s},
the closed fibers of the smooth morphism $\Theta:\mathfrak{S}_{\ell}\to V$
induced by the $\mu_{2}$-invariant projection $\mathrm{pr}_{1}:\tilde{\mathfrak{S}}_{\ell}\to V$
are pairwise non-isomorphic surfaces of the form $S_{\ell,s}$. 
\end{example}

We now relate the family of surfaces $\hat{\pi}_{q}:\hat{S}_{q}\to\mathbb{A}^{1}$
of Notation \ref{nota:hatS_q} to those $\pi_{\ell,s}:S_{\ell,s}\to\mathbb{A}^{1}$
of Notation \ref{nota:S_l,s}. 
\begin{lem}
\label{lem:S_l,s-relativel--minimalComp}An $\mathbb{A}^{1}$-fibered
affine surface $\pi_{\ell,s}:S_{\ell,s}\to\mathbb{A}^{1}$ admits
a relatively minimal SNC completion $(Y_{\ell,s},D_{\ell,s})$ into
a $\mathbb{P}^{1}$-fibered surface $\bar{\pi}_{\ell,s}:Y_{\ell,s}\to\mathbb{P}^{1}$
such that the union of $D_{\ell,s}$ and of the closure $\overline{F}_{\ell,s}$
of $F_{\ell,s}$ is a rational tree of the form \[\xymatrix@R=0.5em @C=0.8em{(F_\infty,0) \ar@{}[r]|\triangleleft & (H,-1)\ar@{}[r]|\triangleleft & (G_0,-2) \ar@{}[r]|\triangleleft & (G_2,-2)\ar@{-}[d] \ar@{}[r]|\triangleleft & \cdots\cdots\cdots \ar@{}[r]|\triangleleft & (G_{\ell+1},-2) \ar@{-}[r] & (\bar{F}_{\ell,s},-1) \\ & & & (G_1,-2),}\] where
$F_{\infty}$ is the fiber of $\bar{\pi}_{\ell,s}$ over $\mathbb{P}^{1}\setminus\mathbb{A}^{1}$,
$H$ is a section of $\bar{\pi}_{\ell,s}$ and $\bar{\pi}_{\ell,s}^{-1}(0)=\overline{F}_{\ell,s}\cup\bigcup_{i=0}^{\ell+1}G_{i}$.
\end{lem}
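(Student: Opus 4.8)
The plan is to construct the completion $(Y_{\ell,s},D_{\ell,s})$ explicitly from the affine equation $x^\ell z = y^2 - s^2(x)$ defining $\tilde S_{\ell,s}$, exploiting the $\mu_2$-quotient description. First I would produce a relatively minimal SNC completion of the smooth $\mathbb{A}^1$-fibered surface $\tilde\pi_{\ell,s}:\tilde S_{\ell,s}\to\mathbb{A}^1$ upstairs, then descend along the $\mu_2$-torsor $\Phi_{\ell,s}$ to get the completion downstairs. Concretely, recall from Notation \ref{nota:tildeS_l,s} that $\tilde S_{\ell,s}$ is obtained by gluing two copies $U_\pm=\mathrm{Spec}(k[x][u_\pm])$ of $\mathbb{A}^1\times\mathbb{A}^1$ over $(\mathbb{A}^1\setminus\{0\})\times\mathbb{A}^1$ via $u_+\mapsto u_++2x^{-\ell}s(x)$. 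Compactifying each chart to a $\mathbb{P}^1$-bundle over $\mathbb{P}^1$ and recording where the gluing isomorphism fails to extend gives a $\mathbb{P}^1$-fibered surface; the pole of $2x^{-\ell}s(x)$ of order $\ell$ at $x=0$ dictates exactly how many blow-ups are needed to resolve, and this is what will ultimately produce the chain of length roughly $\ell$.

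Since the statement is purely about the shape of a completion of $\pi_{\ell,s}:S_{\ell,s}\to\mathbb{A}^1$, the cleanest route is to build the $\mathbb{P}^1$-fibered completion directly. Begin from a Hirzebruch surface $\pi_N:\mathbb{F}_N\to\mathbb{P}^1$ in which $S_{\ell,s}$ embeds as a fiberwise-affine open subset, with boundary consisting of a fiber $F_\infty$ over $\mathbb{P}^1\setminus\mathbb{A}^1$ and a section; then analyze the fiber over $0\in\mathbb{A}^1$. Because $\pi_{\ell,s}^{-1}(0)$ is irreducible of multiplicity two (Notation \ref{nota:S_l,s}), the closure $\bar F_{\ell,s}$ meets the boundary in a way that, after the minimal sequence of blow-ups resolving the $\mathbb{A}^1$-fibration into a $\mathbb{P}^1$-fibration, yields precisely the degenerate fiber $\bar\pi_{\ell,s}^{-1}(0)=\bar F_{\ell,s}\cup\bigcup_{i=0}^{\ell+1}G_i$. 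I would verify by an explicit blow-up computation that the exceptional components $G_0,\dots,G_{\ell+1}$ are all $(-2)$-curves arranged in the displayed chain, with $H$ a section of self-intersection $-1$ and $G_1$ branching off, by tracking self-intersections through each blow-up and contraction. The relative minimality of the completion (no contractible component of the boundary inside a fiber) follows from the $(-2)$ self-intersections together with the incidence pattern, exactly as in the arguments of Lemma \ref{lem:Uniqueness-Reduced-A1Fib} and Lemma \ref{lem:A1-fib-inducedbypencil}.

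An alternative and perhaps more transparent approach, which I would favor, is to deduce the downstairs picture from the upstairs one using the double cover $\phi:\mathbb{A}^1\to\mathbb{A}^1/\!/\mu_2$ of ramification index two over $0$. The smooth surface $\tilde S_{\ell,s}$ has a completion whose degenerate fiber over $0$ is a reduced chain (the fiber being reduced because $\tilde\pi_{\ell,s}$ is smooth with fiber the two disjoint lines $\{x=y\mp 1=0\}$); taking the $\mu_2$-quotient and using that $\phi$ is totally ramified of index two over $0$ doubles the relevant multiplicity data, which is the source of the single irreducible fiber of multiplicity two downstairs. Matching the invariants of the quotient construction against the reduced chain upstairs pins down the number $\ell+2$ of components $G_0,\dots,G_{\ell+1}$ and their $(-2)$ self-intersections.

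\textbf{The main obstacle} I expect is the bookkeeping that establishes that the chain has exactly the length and exactly the branching shown — in particular, that precisely $\ell+2$ components $G_i$ appear, that all carry self-intersection $-2$, and that $G_1$ (rather than some other component) is the one attached off the main chain at $G_0$. This requires carefully tracking the order of contact of $\bar F_{\ell,s}$ with the boundary, which is governed by the pole order $\ell$ of $2x^{-\ell}s(x)$ and by the condition $s(0)=1$ ensuring the leading behavior is as claimed; the hypothesis $s\in k[x^2]$ will be what forces the symmetric $(-2)$-chain structure rather than a more general configuration. Once the self-intersections and incidences are verified at each infinitely near point of the resolution, relative minimality and the SNC property are immediate from the absence of $(-1)$-curves among the $G_i$.
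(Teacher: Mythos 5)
Your proposal lists three alternative strategies but carries none of them to the point where the actual content of the lemma --- that there are exactly $\ell+2$ components $G_{i}$, that all of them have self-intersection $-2$, and where the branch component sits (it is attached to $G_{2}$ in the asserted diagram, not to $G_{0}$ as you write) --- is established; each time the decisive step is deferred to an unspecified ``explicit blow-up computation'' or to ``matching the invariants''. Since that computation \emph{is} the lemma, this is a genuine gap rather than a stylistic one. The route you say you favor, descending from a completion of $\tilde{S}_{\ell,s}$ along the $\mu_{2}$-quotient, has a specific unaddressed difficulty: although $\mu_{2}$ acts freely on $\tilde{S}_{\ell,s}$, on any equivariant completion it has fixed points on the boundary (it exchanges the two ends of the degenerate fiber over $0$, hence acts as a nontrivial involution on the middle component of that fiber and on the section and fiber at infinity), so the quotient surface acquires $A_{1}$-singularities. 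The branch $(-2)$-curve in the asserted dual graph is precisely the exceptional curve of the resolution of such a singularity; a proof along these lines must locate the fixed points, form the quotient, resolve, and then check relative minimality, and none of this is accomplished by saying that the ramification of $\phi$ over $0$ ``doubles the relevant multiplicity data''. Your parenthetical claim that the completed fiber upstairs is a reduced chain is correct, but it too requires the standard multiplicity computation for a fiber which is a chain with $(-1)$-curves at both ends, which you do not supply.

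For comparison, the paper proves the lemma by induction on $\ell$ and sidesteps all of the above. Writing $s_{\ell}=s_{\ell-1}+ax^{\ell-1}$, the endomorphism $(x,y,z)\mapsto(x,y,xz+2as_{\ell-1}+a^{2}x^{\ell-1})$ of $\mathbb{A}^{3}$ induces a $\mu_{2}$-equivariant birational morphism $\tilde{S}_{\ell,s_{\ell}}\to\tilde{S}_{\ell-1,s_{\ell-1}}$ which descends to exhibit $S_{\ell,s_{\ell}}$ as the surface obtained from $S_{\ell-1,s_{\ell-1}}$ by blowing up one point of $F_{\ell-1,s_{\ell-1}}$ and deleting the proper transform of that fiber. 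Each inductive step therefore adds exactly one blow-up to the completion, turning the $(-1)$-curve $\overline{F}_{\ell-1,s_{\ell-1}}$ into the new boundary $(-2)$-curve of the chain, and the base case $\ell=1$ is the complement of a smooth conic in $\mathbb{P}^{2}$ with the fibration given by the pencil $\mathcal{P}_{[0:0:1]}$, whose completion was already computed in Example \ref{exa:ResoPencilConics} (this is where the branch component enters, once and for all). If you wish to salvage your plan, either adopt such an inductive device or actually carry out the equivariant completion and quotient in coordinates; as it stands the argument is a program, not a proof.
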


\begin{proof}
For every $\ell\geq2$ and every polynomial $s_{\ell}\in k[x^{2}]$
of degree $<\ell$ with $s(0)=1$, write $s_{\ell}=s_{\ell-1}+ax^{\ell-1}$
where $s_{\ell-1}\in k[x^{2}]$ is a polynomial of degree $<\ell-1$
and $a\in k$. The endomorphism $(x,y,z)\mapsto(x,y,xz+2as_{\ell-1}+a^{2}x^{\ell-1})$
of $\mathbb{A}^{3}$ induces a $\mu_{2}$-equivariant birational morphism
$\tilde{\sigma}:\tilde{S}_{\ell,s_{\ell}}\to\tilde{S}_{\ell-1,s_{\ell-1}}$
of $\mathbb{A}^{1}$-fibered surfaces. It descends to a birational
morphism $\sigma:S_{\ell,s_{\ell}}\to S_{\ell-1,s_{\ell-1}}$ of $\mathbb{A}^{1}$-fibered
surfaces restricting to an isomorphism over $\mathbb{A}^{1}\setminus\{0\}$
and contracting $F_{\ell,s_{\ell}}$ onto a point $x_{\ell,s_{\ell}}$
of $F_{\ell-1,s_{\ell-1}}$. This morphism $\sigma$ expresses $S_{\ell,s_{\ell}}$
as the surface obtained from $S_{\ell-1,s_{\ell-1}}$ by blowing-up
the point $x_{\ell,s_{\ell}}$ and then removing the proper transform
of $F_{\ell,s_{\ell}}$. Assume that $(Y_{\ell-1,s_{\ell-1}},D_{\ell-1,s_{\ell-1}})$
is a relatively minimal SNC completion of $\pi_{\ell-1,s_{\ell}}:S_{\ell-1,s_{\ell-1}}\to\mathbb{A}^{1}$
into a $\mathbb{P}^{1}$-fibered surface $\overline{\pi}_{\ell-1,s_{\ell-1}}:Y_{\ell-1,s_{\ell-1}}\to\mathbb{P}^{1}$
which satisfies the claimed properties. Then the pair $(Y_{\ell,s_{\ell}},D_{\ell,s_{\ell}})$,
where $\tau:Y_{\ell,s_{\ell}}\to Y_{\ell-1,s_{\ell-1}}$ is the blow-up
of the point $x_{\ell,s_{\ell}}\in\overline{F}_{\ell-1,s_{\ell-1}}$
and $D_{\ell,s}$ is the proper transform of $D_{\ell-1,s_{\ell-1}}\cup\overline{F}_{\ell-1,s_{\ell-1}}$,
endowed the $\mathbb{P}^{1}$-fibration $\bar{\pi}_{\ell,s_{\ell}}=\overline{\pi}_{\ell-1,s_{\ell-1}}\circ\tau$
is a relatively minimal SNC completion of $\pi_{\ell,s_{\ell}}:S_{\ell,s_{\ell}}\to\mathbb{A}^{1}$
which also satisfies the claimed properties. Now the assertion follows
by induction from Example \ref{exa:ResoPencilConics} and the fact
that $\pi_{1,1}:S_{1,1}=\{xz=y^{2}-1\}/\!/\mu_{2}\to\mathbb{A}^{1}$
is isomorphic to the complement of the smooth conic $Q=\{-xz+y^{2}=0\}$
in $\mathbb{P}_{[x:y,z]}^{2}$, endowed with the $\mathbb{A}^{1}$-fibration
associated to the pencil $\mathcal{P}_{[0:0:1]}$. 
\end{proof}
By combining Lemma \ref{lem:Reso-Mult(1,2)}, Lemma \ref{lem:SmoothMult2-S_l,s}
and Lemma \ref{lem:S_l,s-relativel--minimalComp}, we obtain that
every $\mathbb{A}^{1}$-fibered surface $\hat{\pi}_{q}:\hat{S}_{q}\to\mathbb{A}^{1}$
as in Notation \ref{nota:hatS_q} is equivalent to some surface of
the form $\pi_{d-2,s}:S_{d-2,s}\to\mathbb{A}^{1}$ of Notation \ref{nota:S_l,s},
and, conversely, that every equivalence class of $\mathbb{A}^{1}$-fibered
surface $\pi_{d-2,s}:S_{d-2,s}\to\mathbb{A}^{1}$ is realized by an
$\mathbb{A}^{1}$-fibration $\hat{\pi}_{q}:\hat{S}_{q}\to\mathbb{A}^{1}$
induced by restriction of an $\mathbb{A}^{1}$-fibration on an affine
surface $S=\mathbb{F}_{n}\setminus B$ with $B^{2}=d$. Combining
in turn this result with Lemma \ref{lem:Mult(1,2)-to-Mult2}, we obtain
the following: 
\begin{cor}
\label{cor:EquivClassGeomMult2} Let $(\mathbb{F}_{n},B)$ be a pair
as in Lemma \ref{lem:Pairs} b) with $d=B^{2}\geq3$. Then equivalence
classes of $\mathbb{A}^{1}$-fibered surfaces $((\mathbb{F}_{n}\setminus B)_{q},\pi_{q})$
where $q$ ranges through the set of closed points of the boundaries
$B'$ of smooth completions $(\mathbb{F}_{n'},B')$ of $\mathbb{F}_{n}\setminus B$
such that $\mathcal{P}_{q}$ has a singular member of the form $C+2F_{q}$
are in one-to-one correspondence with equivalence classes of $\mathbb{A}^{1}$-fibered
surfaces $(S_{d-2,s},\pi_{d-2,s})$ of Notation \ref{nota:S_l,s}. 
\end{cor}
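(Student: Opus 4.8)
The plan is to assemble the preceding lemmas into the asserted bijection, factoring it through the auxiliary family $(\hat S_q,\hat\pi_q)$ of Notation~\ref{nota:hatS_q} (with $S=\mathbb F_n\setminus B$).

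First I would invoke Lemma~\ref{lem:Mult(1,2)-to-Mult2}: since it asserts that $(S,\pi_q)$ and $(S,\pi_{q'})$ are equivalent if and only if $(\hat S_q,\hat\pi_q)$ and $(\hat S_{q'},\hat\pi_{q'})$ are equivalent, the rule $[(S,\pi_q)]\mapsto[(\hat S_q,\hat\pi_q)]$ is a well-defined bijection between the classes on the left-hand side of the corollary and the classes of the multiplicity-two fibrations $(\hat S_q,\hat\pi_q)$.

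Second I would match the classes of $(\hat S_q,\hat\pi_q)$ with those of $(S_{d-2,s},\pi_{d-2,s})$. By Lemma~\ref{lem:SmoothMult2-S_l,s} each $\hat\pi_q$, having an irreducible degenerate fiber of multiplicity two, is equivalent to some $\pi_{\ell,s}$; to fix $\ell$ I would compare relatively minimal SNC completions. By Notation~\ref{nota:hatS_q} and Lemma~\ref{lem:Reso-Mult(1,2)} the boundary $\hat D_q$ carries exactly $d$ curves of self-intersection $-2$ (namely $E_1,\dots,E_{d-1}$ and $\sigma_*^{-1}C$), whereas by Lemma~\ref{lem:S_l,s-relativel--minimalComp} the completion of $\pi_{\ell,s}$ carries $\ell+2$ such curves ($G_0,\dots,G_{\ell+1}$); as the dual graph of a relatively minimal SNC completion is an invariant of the equivalence class by \cite[Corollary 2]{GiDa75}, the two graphs must coincide and $\ell=d-2$. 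The target class is then unambiguous by Lemma~\ref{lem:EquivClassS_l,s}, so $[(\hat S_q,\hat\pi_q)]\mapsto[(S_{d-2,s},\pi_{d-2,s})]$ is well defined and injective, and surjective by the converse realization. Composing the two bijections yields the corollary.

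The delicate point is the realization, i.e.\ the surjectivity of the second map. Starting from the completion $(Y_{d-2,s},D_{d-2,s})$ of Lemma~\ref{lem:S_l,s-relativel--minimalComp}, one identifies its dual graph, enlarged by the fiber $\bar F_{d-2,s}$, with that of Lemma~\ref{lem:Reso-Mult(1,2)}, and then contracts the section $H$ together with the chain $G_0,G_2,\dots,G_{\ell+1}$, leaving the branch $G_1=\sigma_*^{-1}C$ and the fiber $\bar F_{d-2,s}=\sigma_*^{-1}F_q$ untouched. One must check that this sequence of blow-downs terminates on a Hirzebruch surface and carries $F_\infty$, whose self-intersection climbs from $0$ to $d$, to an ample section $B'$ with $(B')^2=d$, so that the induced fibration is genuinely of the form $\hat\pi_q$. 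Theorem~\ref{thm:DanGiz-Isom} then guarantees that the pair $(\mathbb F_{n'},B')$ so produced is a smooth completion of the fixed surface $S=\mathbb F_n\setminus B$, as required; the remaining checks are routine bookkeeping with the dual graphs already computed in the cited lemmas.
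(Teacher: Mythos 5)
Your proposal is correct and follows essentially the same route as the paper, which obtains the corollary precisely by combining Lemma \ref{lem:Mult(1,2)-to-Mult2} (reduction to the surfaces $(\hat{S}_{q},\hat{\pi}_{q})$) with Lemmas \ref{lem:Reso-Mult(1,2)}, \ref{lem:SmoothMult2-S_l,s} and \ref{lem:S_l,s-relativel--minimalComp} (matching of relatively minimal completions to pin down $\ell=d-2$, and the converse realization via contraction and Theorem \ref{thm:DanGiz-Isom}). You have merely spelled out the bookkeeping that the paper leaves implicit.
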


Proposition \ref{thm:MainThm-Mult2} is now a straightforward consequence
of Corollary \ref{cor:EquivClassGeomMult2} and of the description
of equivalence classes of $\mathbb{A}^{1}$-fibered surfaces $(S_{d-2,s},\pi_{d-2,s})$
given in Lemma \ref{lem:EquivClassS_l,s}. Namely, for $d=3,4$, the
unique equivalences classes are those of $(S_{1,1},\pi_{1,1})$ and
$(S_{2,1},\pi_{2,1})$ respectively. For $d=5$, the two equivalence
classes are those of the surfaces $(S_{3,1},\pi_{3,1})$ and $(S_{3,x^{2}+1},\pi_{3,x^{2}+1})$.
The case $d=6$ is similar. Finally, if $d\geq7$, then Example \ref{exa:Moduli_Sl,s}
provides a family pairwise non-equivalent $\mathbb{A}^{1}$-fibered
surfaces $(S_{d-2,s},\pi_{d-2,s})$ parametrized by the elements of
$k^{m}$, where $m=\left\lfloor \tfrac{d-5}{2}\right\rfloor \geq1$,
showing in particular that the cardinality of $\mathcal{A}_{2}(d)$
is at least equal to that of $k$. 
\begin{rem}
The ``number of moduli'' $m=\left\lfloor \tfrac{d-5}{2}\right\rfloor \geq1$
for equivalence classes of $\mathbb{A}^{1}$-fibered surfaces $(S_{d-2,s},\pi_{d-2,s})$
with a unique singular fiber of multiplicity two deduced from the
explicit family in Example \ref{exa:Moduli_Sl,s} is the same as that
computed by different techniques in \cite{FKZ11}, as can be seen
by taking $k=2$ in Corollary 6.3.20 of \emph{loc. cit.}. The results
in \cite{FKZ11} apply more generally, in particular, to any smooth
affine $\mathbb{A}^{1}$-fibered surface $S\to\mathbb{A}^{1}$ having
a unique singular fiber, irreducible of arbitrary multiplicity $e\geq2$.
On the other hand, it follows from \cite{Fie94} that similarly as
in the case $e=2$ described above, every such surface can be realized
as a quotient of smooth affine surface $\tilde{S}$ endowed with a
smooth $\mathbb{A}^{1}$-fibration $\tilde{\pi}:\tilde{S}\to\mathbb{A}^{1}$
by a suitable free action of a cyclic group $\mu_{e}$ of $e$-th
roots of unity. This suggests the possibility to construct for every
$e\geq2$ explicit families as in Example \ref{exa:Moduli_Sl,s} over
a base scheme $V$ whose dimension equals the number of moduli computed
in \cite[Corollary 6.3.20]{FKZ11}. 
\end{rem}

\subsection{Proof of Theorem \ref{thm:MainThmA1Fib-Aff}}

In this subsection, we finish the proof of Theorem \ref{thm:MainThmA1Fib-Aff}.
Let $S=\mathbb{F}_{n}\setminus B$ for some pair $(\mathbb{F}_{n},B)$
as in Lemma \ref{lem:Pairs} b) with $d=B^{2}\geq2$. If $d\geq7$,
then, by Proposition \ref{thm:MainThm-Mult2}, $S$ has infinitely
many equivalence classes of $\mathbb{A}^{1}$-fibrations $\pi:S\to\mathbb{A}^{1}$.
It remains to show that for every $d\leq6$, the number of equivalence
classes is finite. For every $d\geq2$ and every $m\in1,\ldots,d-1$,
denote by $\mathcal{A}_{m}(d)$ the set of equivalence classes of
$\mathbb{A}^{1}$-fibrations $\pi:S\to\mathbb{A}^{1}$ whose unique
degenerate fiber has an irreducible component of multiplicity $m$
. The following table summarizes the properties of the sets $\mathcal{A}_{m}(d)$: 

\begin{table}[H]
\begin{centering}
\begin{tabular}{|c|c|c|c|c|c|c|}
\hline 
 & $\sharp\mathcal{A}_{1}(d)$ & $\sharp\mathcal{A}_{2}(d)$ & $\sharp\mathcal{A}_{3}(d)$ & $\sharp\mathcal{A}_{4}(d)$ & $\sharp\mathcal{A}_{5}(d)$ & $\sum\sharp\mathcal{A}_{i}(d)$\tabularnewline
\hline 
\hline 
$d=2$ & $1$ & $0$ & $0$ & $0$ & $0$ & $1$\tabularnewline
\hline 
$d=3$ & $1$ & $1$ & $0$ & $0$ & $0$ & $2$\tabularnewline
\hline 
$d=4$ & $1$ & $1$ & $1$ & $0$ & $0$ & $3$\tabularnewline
\hline 
$d=5$ & $1$ & $2$ & $1$ & $1$ & $0$ & $5$\tabularnewline
\hline 
$d=6$ & $1$ & $2$ & $2$ & $1$ & $1$ & $7$\tabularnewline
\hline 
\end{tabular}
\par\end{centering}
\caption{Numbers of equivalence classes of $\mathbb{A}^{1}$-fibrations}
\end{table}

\vspace{-0.5em}

Indeed, we have $\mathcal{A}_{m}(d)=\emptyset$ if $m\geq d$ by Corollary
\ref{cor:GizDanSurface-A1FiberType}. On the other hand, the cardinal
$\sharp\mathcal{A}_{m}(d)$ of $\mathcal{A}_{m}(d)$ is larger than
or equal to $1$ for every $1\leq m\leq d-1$ by Lemma \ref{lem:A1Fib-Mult-Existence}.
The sets $\mathcal{A}_{d-1}(d)$ and $\mathcal{A}_{1}(d)$ both consist
of a single element by Proposition \ref{cor:UniqueMaximal} and Corollary
\ref{cor:UniquenessReducedA1Fib} respectively. By Proposition \ref{thm:MainThm-Mult2},
we have $\sharp\mathcal{A}_{2}(d)=1$ for $d=3,4$ and $\sharp\mathcal{A}_{2}(d)=2$
for $d=5,6$. These observations settle the cases $d=2$, $3$ and
$4$. In the next paragraphs, we determine the remaining numbers of
equivalence classes of $\mathbb{A}^{1}$-fibrations displayed in the
table. We refer the reader to \cite[Section 4]{DubLam15} for the
details of the reductions to the chosen particular models of pairs
which are used in the argument. 

\vspace{-0.4em}

\subsubsection{The case $d=5$ }

The $\mathbb{A}^{1}$-fibrations on $S$ representing elements of
$\mathcal{A}_{3}(5)$ can only arise form pencils $\mathcal{P}_{q}$
on pairs $(\mathbb{F}_{1},B)$ for which $B\sim C_{0}+3F$ intersects
$C_{0}$ with multiplicity two at a single point $q$. Up to isomorphism,
there is a unique such pair, which is given, under the identification
of $\mathbb{F}_{1}$ with the blow-up $\sigma:\mathbb{F}_{1}\to\mathbb{P}^{2}$
of $\mathbb{P}^{2}$ with homogeneous coordinates $[x:y:z]$ at the
point $p=[0:1:0]$ with exceptional divisor $C_{0}$, by taking for
$B$ the proper transform of the cuspidal cubic $C=\{x^{3}-z^{2}y=0\}$
in $\mathbb{P}^{2}$. The section $B=\sigma_{*}^{-1}C\sim C_{0}+3F$
intersects $C_{0}$ with multiplicity two at the intersection point
$q$ of $C_{0}$ with the proper transform of the tangent line $T_{p}C=\{z=0\}$
to $C$ at $p$ and the pencil $\mathcal{P}_{q}$ is the proper transform
of the pencil generated by $C$ and $3T_{p}C$. The associated $\mathbb{A}^{1}$-fibration
$\pi_{q}:S\to\mathbb{A}^{1}$ has $F_{q}\cap S$ as a component of
multiplicity three in its degenerate fiber. We conclude that $\sharp\mathcal{A}_{3}(5)=1$. 

\vspace{-0.3em}

\subsubsection{The case $d=6$ }

The numbers to be computed are $\sharp\mathcal{A}_{3}(6)$ and $\sharp\mathcal{A}_{4}(6)$.
The possible smooth completions $(\mathbb{F}_{n},B)$ of $S$ are
either of the form $(\mathbb{F}_{0},B)$ where $B\sim C_{0}+3F$ is
a section of $\pi_{0}$, or of the form $(\mathbb{F}_{2},B)$ where
$B\sim C_{0}+4F$ is a section of $\pi_{2}$, or the form $(\mathbb{F}_{4},B)$
where $B\sim C_{0}+5F$ is a section of $\pi_{4}$. 

The $\mathbb{A}^{1}$-fibrations on $S$ representing elements of
$\mathcal{A}_{4}(6)$ can arise only from pairs $(\mathbb{F}_{2},B)$
for which $B\sim C_{0}+4F$ intersects $C_{0}$ with multiplicity
two in a single point. Up to isomorphism, there exists a unique such
pair which is given, after fixing a fiber $F_{\infty}$ of $\pi_{2}$
and an identification $\mathbb{F}_{2}\setminus(C_{0}\cup F_{\infty})\cong\mathbb{A}^{2}=\mathrm{Spec}(k[x,y])$
in such way that $\pi_{2}|_{\mathbb{A}^{2}}=\mathrm{pr}_{x}$ and
that the closures in $\mathbb{F}_{2}$ of the level sets of $y$ are
sections of $\pi_{2}$ linearly equivalent to $C_{0}+2F$, by taking
for $B$ the closure of the curve $\Gamma_{x^{4}}=\{y=x^{4}\}\subset\mathbb{A}^{2}$.
For the point $q=B\cap C_{0}=B\cap F_{\infty}$, the singular member
of the pencil $\mathcal{P}_{q}$ is equal to $C_{0}+4F_{\infty}$.
The unique degenerate fiber of the corresponding $\mathbb{A}^{1}$-fibration
$\pi_{q_{4}}:S\to\mathbb{A}^{1}$ has $F_{\infty}\cap S$ as a component
of multiplicity four and we conclude that $\sharp\mathcal{A}_{4}(6)=1$. 

The $\mathbb{A}^{1}$-fibrations representing elements of $\mathcal{A}_{3}(6)$
can arise only from pairs $(\mathbb{F}_{0},B)$ on $\pi_{0}=\mathrm{pr}_{1}:\mathbb{F}_{0}=\mathbb{P}^{1}\times\mathbb{P}^{1}\to\mathbb{P}^{1}$
for which $B\sim C_{0}+3F$ intersects a fiber of the second projection
with multiplicity three at some point $q$. Up to isomorphisms, there
are exactly two such pairs $(\mathbb{F}_{0},B_{1})$ and $(\mathbb{F}_{0},B_{2})$
which, using bi-homogeneous coordinates $([u_{0}:u_{1}],[v_{0}:v_{1}])$
on $\mathbb{P}^{1}\times\mathbb{P}^{1}$, are given by the curves
$B_{1}=\{u_{1}^{3}v_{0}+u_{0}^{2}(u_{0}+u_{1})v_{1}=0\}$ and $B_{2}=\{u_{1}^{3}v_{0}+u_{0}^{3}v_{1}=0\}$.
The only fiber of $\mathrm{pr}_{2}$ which intersects $B_{1}$ in
a single point is the curve $C_{[1:0]}=\{v_{1}=0\}$ with $q=C_{[1,0]}\cap B_{1}=([1:0],[1:0])$.
This yields an $\mathbb{A}^{1}$-fibration $\pi_{q}:S\to\mathbb{A}^{1}$
which has $F_{q_{1}}\cap S$ as component of multiplicity three in
its degenerate fiber. In contrast, there are two fibers of $\mathrm{pr}_{2}$
which intersects $B_{2}$ in a single point: the curve $C_{[0:1]}=\{v_{0}=0\}$
at the point $q_{0}=([0:1],[0:1])$ and the curve $C_{[1:0]}$ at
the point $q_{\infty}=([1:0],[1:0])$. The $\mathbb{A}^{1}$-fibrations
$\pi_{q_{0}}:S\to\mathbb{A}^{1}$ and $\pi_{q_{\infty}}:S\to\mathbb{A}^{1}$
associated to the pencils $\mathcal{P}_{q_{0}}$ and $\mathcal{P}_{q_{\infty}}$
have $F_{q_{0}}\cap S$ and $F_{q_{\infty}}\cap S$ as components
of multiplicity three of their respective degenerate fibers, and since
the points $q_{0}$ and $q_{\infty}$ belongs to the same orbit of
the action of the group $\mathrm{Aut}(\mathbb{F}_{0},B_{2})\cong\mathbb{G}_{m}\times\mathbb{Z}_{2}$,
these $\mathbb{A}^{1}$-fibrations represents a same element of $\mathcal{A}_{3}(6)$.
The next lemma shows that $\mathcal{A}_{3}(6)$ consists of two elements
and completes the proof.
\begin{lem}
The $\mathbb{A}^{1}$-fibration $\pi_{q}:S\to\mathbb{A}^{1}$ is not
equivalent to $\pi_{q_{0}}:S\to\mathbb{A}^{1}$ (hence not equivalent
to $\pi_{q_{\infty}}:S\to\mathbb{A}^{1}$).
\end{lem}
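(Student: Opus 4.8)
The plan is to reduce the statement to a comparison of two $\mathbb{A}^1$-fibered surfaces with irreducible degenerate fiber of multiplicity three, and then to separate these by a discrete modulus read off from a cyclic cover, in direct analogy with the treatment of the multiplicity two case in Lemmas~\ref{lem:SmoothMult2-S_l,s}--\ref{lem:EquivClassS_l,s}. Suppose for contradiction that $\Psi\colon(S,\pi_q)\to(S,\pi_{q_0})$ is an equivalence of $\mathbb{A}^1$-fibered surfaces. By Corollary~\ref{cor:GizDanSurface-A1FiberType}, the unique degenerate fiber of each of $\pi_q$ and $\pi_{q_0}$ is the disjoint union of a reduced component and a component of multiplicity three. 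Since $\Psi$ preserves the multiplicities of fiber components, it maps the reduced component $C\cap S$ of $\pi_q$ onto the reduced component $C'\cap S$ of $\pi_{q_0}$ and the multiplicity three component $F_q\cap S$ onto $F_{q_0}\cap S$. Hence $\Psi$ restricts to an equivalence $\hat\Psi\colon(\hat S_q,\hat\pi_q)\to(\hat S_{q_0},\hat\pi_{q_0})$ between $\hat S_q=S\setminus C$ and $\hat S_{q_0}=S\setminus C'$, each carrying an $\mathbb{A}^1$-fibration whose unique degenerate fiber is irreducible and of multiplicity three. It therefore suffices to prove that $(\hat S_q,\hat\pi_q)$ and $(\hat S_{q_0},\hat\pi_{q_0})$ are not equivalent.

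To this end I would repeat the construction of Lemma~\ref{lem:SmoothMult2-S_l,s} with the group $\mu_2$ replaced by $\mu_3$. Writing the base as $\mathrm{Spec}(k[t])$ with degenerate fiber over $t=0$, I pull back along the totally ramified cover $\phi\colon\mathrm{Spec}(k[x])\to\mathrm{Spec}(k[t])$, $t=x^3$, and normalize; by \cite{Fie94} this produces smooth $\mathbb{A}^1$-fibered surfaces $\tilde{\hat S}_q$ and $\tilde{\hat S}_{q_0}$ equipped with free $\mu_3$-actions, whose degenerate fibers now consist of three reduced components cyclically permuted by $\mu_3$, together with $\mu_3$-equivariant isomorphisms onto the corresponding quotients. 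Since $\mathrm{Pic}(\hat S_{q_0})\cong\mathbb{Z}_3$ is generated by the class of the reduced fiber, the same étale cohomology argument as in the proof of Lemma~\ref{lem:EquivClassS_l,s} shows that this $\mu_3$-cover is the canonical nontrivial $\mu_3$-torsor, so that $\hat\Psi$ lifts to a $\mu_3$-equivariant equivalence $\tilde\Psi$ between $\tilde{\hat S}_q$ and $\tilde{\hat S}_{q_0}$.

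The decisive computation is then the determination of the gluing data of these covers from the explicit pencils. Using the local coordinates $a=u_1/u_0$, $b=v_1/v_0$ at $q$ and $a=u_0/u_1$, $b=v_0/v_1$ at $q_0$, the general members of $\mathcal{P}_q$ and $\mathcal{P}_{q_0}$ are given respectively by $b=-a^3/(1+a+\mu a^3)$ and $b=-a^3/(1+\mu a^3)$. For the pair $(\mathbb{F}_0,B_2)$ the defining expression depends on $a$ only through $a^3$, so after passing to the $\mu_3$-cover the gluing cocycle is invariant under $x\mapsto\zeta x$ with $\zeta^3=1$; in the normal form of Notation~\ref{nota:S_l,s} this is the symmetric parameter, i.e. the modulus value $0$. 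For $(\mathbb{F}_0,B_1)$ the extra term $a$ in the denominator breaks this symmetry and yields a nonzero modulus. As in the implication b)$\Rightarrow$c) of Lemma~\ref{lem:EquivClassS_l,s} (invoking \cite[Proposition 3.6]{DubPo09}), a $\mu_3$-equivariant equivalence forces the two moduli to agree up to the rescaling $x\mapsto\lambda x$; since the admissible rescalings act on the modulus by $c\mapsto\lambda^3 c$, they cannot carry a nonzero value to $0$, which is the desired contradiction. Note that for $d=6$ and multiplicity three the relevant parameter is $s=1+cx^3$, so this dichotomy accounts precisely for the two classes in $\mathcal{A}_3(6)$.

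I expect the main obstacle to be the rigorous setup of the $\mu_3$-version of this structure theory: Fieseler's model for a smooth $\mathbb{A}^1$-fibration with a single reducible (three component) degenerate fiber is governed by a length three gluing cocycle rather than the single transition function $u_+\mapsto u_++f(x)$ of the multiplicity two case, and one must check that the discrete invariant distinguishing the symmetric cocycle from the non-symmetric one is intrinsic, i.e. unchanged by all equivalence-admissible coordinate changes. A conceptually transparent way to package this, which I would use to double-check the computation, is to observe that the order three automorphism $([u_0:u_1],[v_0:v_1])\mapsto([u_0:\zeta u_1],[v_0:v_1])$ of $\mathbb{F}_0$ preserves $B_2$, $C'$ and every member of $\mathcal{P}_{q_0}$, hence induces a nontrivial automorphism of $(\hat S_{q_0},\hat\pi_{q_0})$ acting trivially on the base, whereas the term $u_0^2u_1$ appearing in $B_1$ shows that $(\hat S_q,\hat\pi_q)$ admits no such vertical $\mu_3$-symmetry; the presence or absence of this symmetry is an equivalence invariant separating the two classes.
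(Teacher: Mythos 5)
Your overall strategy is genuinely different from the paper's, and its skeleton is sound: the reduction to the surfaces $\hat S_q$ and $\hat S_{q_0}$ with irreducible multiplicity-three degenerate fibers is the (easy direction of the) multiplicity-three analogue of Lemma \ref{lem:Mult(1,2)-to-Mult2}, your local equations $b=-a^3/(1+a+\mu a^3)$ and $b=-a^3/(1+\mu a^3)$ for the two pencils are correct, and the order-three automorphism $([u_0:u_1],[v_0:v_1])\mapsto([u_0:\zeta u_1],[v_0:v_1])$ does preserve $B_2$, $C_{[0:1]}$, $F_{q_0}$ and every member of $\mathcal{P}_{q_0}$, hence induces a nontrivial vertical automorphism of $(\hat S_{q_0},\hat\pi_{q_0})$. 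The problem is that both separating mechanisms you offer are asserted rather than proved. The modulus argument rests on a $\mu_3$-analogue of Notation \ref{nota:S_l,s} and Lemmas \ref{lem:SmoothMult2-S_l,s}--\ref{lem:EquivClassS_l,s} that the paper explicitly does \emph{not} develop (see the closing remark of that subsection): for $e\geq 3$ the normalized cyclic cover is a Danielewski-type surface glued from three charts, its classification over the base involves a triple of transition functions, and \cite[Proposition 3.6]{DubPo09}, which you invoke for the transformation law $c\mapsto\lambda^3c$, applies only to the degree-two surfaces $xz=y^2-\sigma(x)$. Without that structure theory in place, ``the modulus of $B_1$ is nonzero while that of $B_2$ is zero'' is not yet a proof.

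The symmetry packaging is the more promising route, but its second half --- ``the term $u_0^2u_1$ shows that $(\hat S_q,\hat\pi_q)$ admits no such symmetry'' --- is exactly the point that needs an argument. A vertical automorphism of the affine surface $\hat S_q$ a priori only extends to a birational self-map of $\mathbb{F}_0$; one must eliminate its base points (in the spirit of Lemma \ref{lem:Pencil-Aut-Orbit-Equivalence} or Lemma \ref{lem:Mult(1,2)-to-Mult2}) to conclude that it comes from an automorphism of $(\mathbb{F}_0,B_1\cup C_{[1:0]}\cup F_q)$ preserving $\mathcal{P}_q$, and then compute that this automorphism group contains no order-three element acting trivially on the base. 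For comparison, the paper sidesteps all of this with a cruder invariant: $\pi_{q_0}$ admits a $3$-section isomorphic to $\mathbb{A}^1$ meeting the multiple component of its degenerate fiber transversely in a single point (namely $C_{[1:0]}\cap S$), and an explicit birational computation --- contracting the resolved configuration to $\mathbb{P}^2$ and analyzing the degree and multiplicities of the resulting plane curve --- shows that $\pi_q$ admits no such $3$-section. Your approach is salvageable and would arguably yield finer information, but as written it has a genuine gap at each of the two places where the two fibrations are actually told apart.
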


\begin{proof}
The curve $C_{[1:0]}\cap S\cong\mathbb{A}^{1}$ is a $3$-section
of $\pi_{q_{0}}:S\to\mathbb{A}^{1}$ which intersects the multiple
irreducible component $F_{q_{0}}\cap S$ of the degenerate fiber of
$\pi_{q_{0}}$ transversely in a single point. To verify that $\pi_{q}:S\to\mathbb{A}^{1}$
is not equivalent to $\pi_{q_{0}}:S\to\mathbb{A}^{1}$, it suffices
to show that there is no $3$-section of $\pi_{q}:S\to\mathbb{A}^{1}$
isomorphic to $\mathbb{A}^{1}$ and intersecting $F_{q}$ transversely
in a single point. Suppose on the contrary that such a $3$-section
$D$ exists. Let $\sigma:\tilde{\mathbb{F}}_{0}\to\mathbb{F}_{0}$
be the minimal resolution of the rational map $\rho_{q}:\mathbb{F}_{0}\dashrightarrow\mathbb{P}^{1}$
defined by $\mathcal{P}_{q}$. The closure $\bar{D}$ of $D$ in $\tilde{\mathbb{F}}_{0}$
is a rational $3$-section of the $\mathbb{P}^{1}$-fibration $\tilde{\rho}_{q}=\rho_{q}\circ\sigma$
which intersects the proper transform $\sigma_{*}^{-1}B_{1}$ of $B_{1}$
with multiplicity $3$ in a single point $p$. The total transform
of $B_{1}\cup F_{q}\cup C_{[1:0]}$ in $\tilde{\mathbb{F}}_{0}$ being
rational tree of the form \[\xymatrix@R=0.5em @C=0.6em{(\sigma_*^{-1}B,0) \ar@{}[r]|\triangleleft & (E_6,-1)\ar@{}[r]|\triangleleft & (E_5,-2) \ar@{}[r]|\triangleleft & (E_4,-2)\ar@{}[r]|\triangleleft & (E_3,-2)\ar@{}[r]|\triangleleft \ar@{-}[d] &  (E_2,-2) \ar@{}[r]|\triangleleft &  (E_1,-2) \ar@{-}[r] & (\sigma_*^{-1}F_q,-1) \\ & & & & (\sigma_*^{-1}C_{[1:0]},-3),}\] 
there exists a unique birational morphism of $\mathbb{P}^{1}$-fibered
surface $\tau:\tilde{\mathbb{F}}_{0}\to\mathbb{F}_{1}$ which contracts
$\sigma_{*}^{-1}F_{q}\cup\sigma_{*}^{-1}C_{[1:0]}\cup\bigcup_{i=1}^{4}E_{i}$
onto a point $s\in\tau(E_{5})\setminus\tau(E_{6})$. The curve $\tau(\bar{D})$
is a $3$-section of $\pi_{1}:\mathbb{F}_{1}\to\mathbb{P}^{1}$ which
has a cusp of multiplicity $2$ at $s$ and intersects $\tau(E_{5})$
with multiplicity $3$ at $s$. Let $C$ be the image of $\tau(\bar{D})$
by the contraction $\alpha:\mathbb{F}_{1}\to\mathbb{P}^{2}$ of $\tau(E_{6})$
to a point $p'$. Assume that $m=\tau(\bar{D})\cdot\tau(E_{6})\geq1$.
Then $C$ is a curve of degree $m+3$ which intersects the line $\alpha(\tau(\sigma_{*}^{-1}B_{1}))$
with multiplicity $m+3$ at $p'$ and the line $\alpha(\tau(E_{5}))$
with multiplicity $m$ at $p'$ and multiplicity $3$ at $\alpha(s)$.
Choosing homogeneous coordinates $[x:y:z]$ on $\mathbb{P}^{2}$ so
that $\alpha(\tau(\sigma_{*}^{-1}B_{1}))=\{z=0\}$, $\alpha(\tau(E_{5}))=\{x=0\}$
and $\alpha(s)=[0:0:1]$, the curve $C$ is thus given by an equation
of the form $\lambda x^{m+3}-\mu y^{3}z^{m}=0$ for some $\lambda,\mu\in k^{*}$.
But this is impossible since on the other hand $C=\alpha(\tau(\bar{D}))$
has multiplicity $2$ at $\alpha(s)$. So $m=0$ and hence, $C$ is
a cubic with a cusp at $\alpha(s)$ and intersecting $\tau(\sigma_{*}^{-1}B)$
with multiplicity $3$ at a point other than $p'$. It follows that
$\sigma(\bar{D})$ is a smooth rational curve which intersects $F_{q}$
transversely at unique point of $F_{q}\setminus\{q\}$ and $B_{1}$
at a unique point of $B_{1}\setminus\{q\}$, with multiplicity $3$.
Thus, $\sigma(\bar{D})$ is a fiber of $\mathrm{pr}_{2}:\mathbb{P}^{1}\times\mathbb{P}^{1}\to\mathbb{P}^{1}$
which intersects $B_{1}$ with multiplicity $3$ at a point other
than $q$, which is impossible. 
\end{proof}
\begin{acknowledgement*}
The author is very grateful to Mikhail Zaidenberg and Takashi Kishimito
for their helpful comments and valuable suggestions which helped to
improve the completeness and the readability of the article.
\end{acknowledgement*}
\bibliographystyle{amsplain} 

\end{document}